\documentclass[12pt]{article}
\usepackage{amsfonts}
\usepackage[margin=1.00in]{geometry}
\usepackage{amsmath,amssymb,amsthm,mathtools}
\usepackage{microtype}
\usepackage{enumitem}
\usepackage{booktabs,array}
\usepackage{authblk}
\usepackage[
  colorlinks=true,
  linkcolor=red,
  citecolor=blue,
  urlcolor=blue
]{hyperref}
\usepackage{mathrsfs}
\allowdisplaybreaks 

\newtheorem{theorem}{Theorem}[section]
\newtheorem{lemma}[theorem]{Lemma}
\newtheorem{proposition}[theorem]{Proposition}
\newtheorem{corollary}[theorem]{Corollary}
\theoremstyle{definition}

\theoremstyle{remark}

\newcommand{\B}{\mathbb B}
\newcommand{\PP}{\mathbb P}
\newcommand{\CC}{\mathbb C}

\newcommand{\cI}{\mathcal I}
\newcommand{\cK}{\mathcal K}

\newcommand{\kcone}{\mathbf K_{\mathrm{cone}}}
\newcommand{\rk}{\operatorname{rank}}
\newcommand{\im}{\operatorname{im}}
\newcommand{\Span}{\operatorname{Span}}

\newcommand{\ord}{\operatorname{ord}}
\newcommand{\Div}{\operatorname{div}}
\newcommand{\Dop}{\mathscr D}

\newcommand{\cO}{\mathcal O}
\newcommand{\md}{\mathbf m}

\newcommand{\BB}{{\mathbb B}}

\newcommand{\C}{\mathbb C}

\newcommand{\HH}{\mathbb H}

\newcommand{\ov}{\overline}
\newcommand{\p}{\partial}

\numberwithin{equation}{section}

\title{Rigidity for proper holomorphic ball maps with degenerate CR Gauss map}

\author{Tianzhi Hu, Wanke Yin{\footnote{Supported by Hubei Provincial Innovation Group Project (No. 2025AFA044)}}\ \  and Pingsan Yuan}

\date{}

\begin{document}
\maketitle

\begin{abstract}
Let $n$ and $N$ be integers with $2\leq n<N$, and let
$F:\mathbb{B}^n\to\mathbb{B}^N$ be a proper holomorphic map that
admits a $C^{N-n+1}$-smooth extension to the boundary. We prove that
if the CR Gauss map of its boundary restriction is generically
degenerate, then $F$ is holomorphically equivalent, under
composition with automorphisms of the source and target balls, to
$V_m\oplus 0$ for some positive integer $m$, where $
V_m(z)=\big(\sqrt{\frac{m!}{\alpha!}} z^\alpha\big)_{|\alpha|=m} $
is the degree-$m$ Veronese map. This resolves a problem posed by
Xiaojun Huang.
\end{abstract}
\addtocontents{toc}{\protect\setcounter{tocdepth}{2}}

\section{Introduction}

The Gauss map records how the tangent spaces of a submanifold vary.
For an oriented regular surface $M\subset\mathbb R^3$, the classical
Gauss map is the unit normal map $\nu:M\to S^2$. Under the natural
identification of the tangent planes, the shape operator is $-d\nu$,
and its determinant is the Gaussian curvature. Degeneracy of the
Gauss map thus detects tangent directions along which the normal has
vanishing first-order variation. In the case of surfaces in $\mathbb
P^3$, Segre \cite{Seg1910} showed that a surface with degenerate Gauss map is
either a cone over a curve or the tangential variety to a curve.

The corresponding construction in projective geometry assigns to
each smooth point its embedded projective tangent space. Let
$X\subset\mathbb P^N$ be an irreducible complex projective variety
of dimension $k$, with smooth locus $X_{\mathrm{reg}}$. Its
projective Gauss map is
\begin{align}\label{eq:projGauss}
\Gamma_X:X_{\mathrm{reg}} \longrightarrow
\operatorname{Gr}(k+1,\mathbb C^{N+1}), \qquad x\longmapsto \widehat
T_xX,
\end{align}
where $\widehat T_xX\subset\mathbb C^{N+1}$ is the
$(k+1)$-dimensional vector subspace whose projectivization is the
embedded projective tangent $k$-plane $\mathbb T_xX\subset\mathbb
P^N$. The Gauss map is called degenerate if its generic fibers have
positive dimension, equivalently, if its generic differential rank
is less than $k$. Its differential is governed by the projective
second fundamental form.

The geometry of varieties with degenerate Gauss maps has been
studied extensively. Griffiths--Harris \cite{GH79} developed a
systematic approach using moving frames and fundamental forms. In
particular, they showed that the closure of a general fiber of the
projective Gauss map is a projective linear subspace. Thus, a
variety with degenerate Gauss map is locally ruled by linear spaces
along which its embedded tangent space remains fixed. The
differential-geometric structure of submanifolds with degenerate
Gauss maps was further investigated by Akivis--Goldberg
\cite{AG01}; see also their monograph \cite{AG04} for a systematic
treatment of varieties with degenerate Gauss maps. The projective
differential geometry of the second fundamental form and its
relation to tangential and secant defects was further developed by
Landsberg \cite{La94,La96}. More specialized developments include
the study of developable varieties of Gauss rank $2$ by Piontkowski
\cite{Pi02}. Related questions have also been investigated by Ran
\cite{Ra81} for subvarieties of abelian varieties and by Hwang
\cite{Hw02} in complex hyperbolic space forms.

In CR geometry, the work of Chern--Moser \cite{CM74} and Webster
\cite{We78} provides a fundamental framework for the study of CR
mappings. The role of second fundamental forms in rigidity questions
was further developed by Ebenfelt--Huang--Zaitsev
\cite{EHZ04,EHZ05}. Ji--Yuan \cite{JY10} proved that a smooth CR
embedding of a strictly pseudoconvex hypersurface into a sphere
whose CR second fundamental form vanishes has image locally
contained in a totally geodesic sphere. Cheng--Ji \cite{CJ12}
obtained related linearity results under conditions on second
fundamental forms. Cheng--Ji--Liu \cite{CJL13} investigated Gauss
maps of spherical CR submanifolds, while Yin--Yuan--Zhang \cite{YYZ20}
established linearity criteria under a restriction on geometric
rank, together with additional hypotheses on degeneracy invariants
or the target dimension.

These developments motivate the classification of proper holomorphic
maps between balls whose boundary Gauss maps are generically
degenerate. In this paper, we address a problem posed by Xiaojun
Huang and show that, under the boundary regularity assumption stated
below, such maps are equivalent to homogeneous Veronese maps with
possible zero components added. The precise definition of the CR
Gauss map used here is given in Section~2; generic degeneracy refers
to positive-dimensional generic real fibers of the boundary Gauss
map.

Two proper holomorphic maps $f,g:\mathbb B^n\to\mathbb B^N$ are said
to be \emph{holomorphically equivalent} if $ g=\tau\circ
f\circ\sigma $ for some $\sigma\in\operatorname{Aut}(\mathbb B^n)$
and $\tau\in\operatorname{Aut}(\mathbb B^N)$. With this notion of
equivalence, we can now state our main theorem.

\begin{theorem}\label{thm:main}
Let $n$ and $N$ be integers with $2\le n<N$, and let $F:\mathbb
B^n\to\mathbb B^N$ be a proper holomorphic map that admits a
$C^{N-n+1}$-smooth extension to $\overline{\mathbb B^n}$. If the CR
Gauss map of $F|_{\partial\mathbb B^n}$ is generically degenerate,
then $F$ is holomorphically equivalent to $V_m\oplus 0$ for some
positive integer $m$ satisfying
\[
M_m:=\binom{n+m-1}{m}\le N,
\]
where
\[
V_m(z)=
\left(\sqrt{\frac{m!}{\alpha!}}\,z^\alpha\right)_{|\alpha|=m}
:\mathbb B^n\longrightarrow\mathbb B^{M_m}
\]
is the degree-$m$ Veronese map, and the zero component has dimension
$N-M_m$.
\end{theorem}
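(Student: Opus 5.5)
Here is the plan. First, rationality: by the $C^{N-n+1}$ boundary regularity and Forstnerič's theorem (or Huang's rationality results), $F$ is rational. The CR Gauss map then extends meromorphically along Segre varieties.

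Next, normal forms. Fix a generic point $p\in\partial\mathbb B^n$ and pass to the Siegel model $\mathbb H^n$ with $p=0$. There we use the Chern--Moser/Huang normalization of $F$ to the form $(z,\,w,\,\phi,\,g)$ with $\phi=O(|(z,w)|^2)$ and $g=O(|(z,w)|^2)$. The CR Gauss map at a point $q$ sends $q$ to the complex tangent space $dF(T^{1,0}_q)$ together with its osculating data. For the purpose of degeneracy I will treat it as the map $q\mapsto$ the span of $F$, $L_jF$ and higher Lie derivatives. Here the $L_j$ are CR vector fields; this span is the full osculating space. Its generic rank equals the geometric rank of the successive CR fundamental forms. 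Generic degeneracy means that there is a real curve, and by real-analyticity a family of real directions, along which the full osculating flag $E_1\subset E_2\subset\cdots\subset E_k=\CC^{N+1}$ is constant.

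Key step: show that degeneracy forces the fibre directions to be transverse to $T^{1,0}\oplus T^{0,1}$. I argue by contradiction. If a fibre were tangent to a complex tangent direction $X$, differentiating the constancy of the first osculating space along $X$ would produce a nontrivial kernel of the CR second fundamental form, $\mathrm{II}(X,\cdot)=0$. Using the Gauss equation for spherical targets (the Huang--Ebenfelt--Zaitsev identity $\langle \mathrm{II}(X,Y),\mathrm{II}(Z,W)\rangle$ expressed through the Levi form), a vanishing row forces the Levi form term to vanish, which is impossible. Applying the same argument inductively to the higher fundamental forms, I conclude that the fibre is generated by the Reeb direction $T$.

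So the whole osculating flag is invariant along $T$ at generic points. Equivalently, $TF$ lies in the span of $F$ and its CR derivatives up to the top order $m$, where $m$ is the length of the flag before saturation. I expect this equivalence to be the main obstacle, since the computation must be done carefully order by order. Translated back to the ball, the condition says that $F$ is invariant, up to a target automorphism, under the circle action $z\mapsto e^{i\theta}z$ combined with a unitary action on $\CC^N$. Concretely, I would show that $\Span\{\partial^\alpha F\}$ is stable under the Euler field $\sum z_j\partial_j$. Since $F$ is rational, one can then diagonalize this action, making $F$ a sum of homogeneous pieces.

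Finally, I must rule out mixed degrees. Constancy of the top osculating space along $T$ should force all homogeneous pieces to have the same degree $m$. Otherwise a lower-degree piece would contribute a $T$-derivative outside the span, which is visible already at the first differing degree. A homogeneous proper map $\partial\mathbb B^n\to\partial\mathbb B^N$ of degree $m$ satisfies $\|F(z)\|^2=\|z\|^{2m}$. By the standard D'Angelo argument, it then equals $U(V_m\oplus 0)$ for a unitary $U$, which forces $M_m\le N$. Undoing the normalizing automorphisms gives the stated equivalence.
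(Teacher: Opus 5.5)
Your proposal has a genuine gap: it never shows that the Gauss-kernel directions all point toward one common point of $\mathbb B^n$. The failure is at the step from ``the Gauss fibre is transverse to $T^{1,0}\oplus T^{0,1}$'' to ``the fibre is generated by the Reeb direction $T$'', and from there to equivariance under $z\mapsto e^{i\theta}z$.

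Your Gauss-equation argument, carried out correctly, at best forces $\mathrm{II}$ to vanish when a complex-tangential vector $X$ satisfies $\mathrm{II}(X,\cdot)=0$. That is the linear case, not a contradiction. It therefore only shows that a kernel vector has a nonzero $T$-component; the kernel vector may still be $T+X+\overline X$ with $X\neq0$.

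The conclusion you want is in fact false for maps satisfying the hypotheses. Take $F=V_m\circ\phi_a$ with $m\ge2$, $\phi_a\in\operatorname{Aut}(\mathbb B^n)$, $\phi_a(a)=0$ and $a\ne0$. Then $\gamma_F=\gamma_{V_m}\circ\phi_a$, and the complex kernel at $z$ is $\mathbb C(z-a)$. The real fibre direction $\mathbb C(z-a)\cap T_z\partial\mathbb B^n$ differs from $\mathbb R\,iz$ whenever $a\notin\mathbb C z$.

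What must actually be proved is that the kernel line field consists of the complex lines through one common point of $\mathbb B^n$. A Chern--Moser/Huang normalization at a single point $p$ cannot produce such a global center, and the unspecified induction through higher fundamental forms does not either. Finding this center is the substance of the paper's argument:
\begin{enumerate}
\item From $B(P,P^\#)=hQ$ and the operator identity $\Dop_\xi Q=0$ one gets the separation $\Div(Q|_{\cI})=s\Delta_\kappa+p_1^*D_1+p_2^*D_2$.
\item A direction with $s=0$ exists whenever the defect is at least two. It forces $Q=\alpha\alpha^\#$, hence $\alpha\mid P_j$ for all $j$, hence $m=1$.
\item The mixed branch is excluded using the Griffiths--Harris theorem and the structure of $\Theta$.
\item In the pure branch $\Delta_\kappa=p_2^{-1}(\Pi_L)$, which yields a constant kernel representative $v\equiv e$. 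Positivity of $Q$ on the null cone forces $H(e)<0$.
\end{enumerate}
Only after this can a source automorphism make the kernel radial.

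Even granting a radial kernel, your route to homogeneity is not yet an argument. Constancy of an osculating flag along Hopf circles does not by itself produce a unitary one-parameter group intertwining $F$. Stability of $\operatorname{Span}\{\partial^\alpha F\}$ under $\sum z_j\partial_j$ is also weak: for non-polynomial $F$ this span is infinite-dimensional, and for polynomial $F$ it at most says the homogeneous pieces of $F$ lie in the span. Every polynomial map is already a sum of homogeneous pieces, so everything rests on the unargued claim that mixed degrees cannot occur. In addition, replacing $\gamma_f$ by the full osculating flag changes the hypothesis. Constancy of the higher osculating spaces along first-order Gauss fibres is true, but it needs a prolongation argument.

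The paper closes this step differently:
\begin{enumerate}
\item By the Griffiths--Harris theorem, general Gauss fibres of $X$ are linear. So after normalizing $F(0)=0$, the image of a general radial disc lies in a complex line $\mathbb C u_\zeta$.
\item Then $F(t\zeta)=b_\zeta(t)u_\zeta$, where $b_\zeta$ is a finite Blaschke product whose only zero in the disc is $t=0$, of order $r$ equal to the vanishing order of $F$ at $0$.
\item Hence $F(t\zeta)=t^rF(\zeta)$, so $F$ is homogeneous of degree $r=m$.
\end{enumerate}
At that point your final D'Angelo step applies.
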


The boundary regularity assumption allows us to apply
Forstneri\v{c}'s celebrated rationality theorem \cite{Fo89}. Together with the
boundary extension result of Cima--Suffridge \cite{CS90}, this
reduces the problem to the study of rational maps that extend
holomorphically across the boundary.

The Veronese maps provide the nonlinear models in the theorem. The
multinomial identity gives
\[
\|V_m(z)\|^2=\|z\|^{2m},
\]
so $V_m$ maps the unit ball properly into $\mathbb B^{M_m}$. For
$z\ne0$, write $ E_z=\operatorname{im}DV_m(z). $ Homogeneity implies
\[
DV_m(tz)=t^{m-1}DV_m(z),\qquad t\in\mathbb C^*,
\]
and hence $E_{tz}=E_z$. Euler's identity $ DV_m(z)z=m\,V_m(z)$ also
shows that $V_m(z)\in E_z$. Consequently, the affine tangent plane
$V_m(z)+E_z$ equals $E_z$ and remains fixed along each punctured
complex radial line. Its projective closure in $\mathbb P^{M_m}$
remains fixed as well. Restricting to the boundary, each Hopf circle
$ \{e^{i\theta}q:0\le\theta<2\pi\}$ of $q\in\partial\mathbb B^n$ is
mapped to a single tangent plane under the Gauss map. 
For $m\ge2$, by \eqref{eq:CRGauss}, the CR Gauss map of $V_m$ is
\[
\gamma_{V_m}(z)=\operatorname{im}DV_m(z)=E_z,
\qquad z\in\partial\mathbb B^n.
\]
A direct computation shows that the only complex ambient directions
along which $E_z$ remains fixed to first order form the radial line
$\mathbb Cz$. Hence
\[
\ker_{\mathbb R}d\gamma_{V_m,z}
=
\mathbb Cz\cap T_z\partial\mathbb B^n
=
\mathbb R\, iz.
\]
Thus the generic real fibers are one-dimensional. When $m=1$, the CR Gauss map is constant.

Our result also contributes to the classification of proper
holomorphic maps between balls. The first rigidity theorem,
developed through the work of Webster \cite{We79}, Faran
\cite{Fa86}, and Huang \cite{Hu99}, states that a proper holomorphic
map $\mathbb B^n\to\mathbb B^N$ with $n\ge3$ and $N<2n-1$ is totally
geodesic if it is $C^2$-smooth up to the boundary. For rational
proper maps, Huang--Ji \cite{HJ01} classified the case $N=2n-1$,
$n\ge3$, obtaining the linear and Whitney maps. For $n\ge4$, Hamada
\cite{Ha05} classified the case $N=2n$, and Huang--Ji--Xu
\cite{HJX06} established the second gap theorem for $2n<N<3n-3$. In
this range, the models are the D'Angelo family \cite{DA88},
\[
F_\theta(z',z_n) = \bigl(z',(\cos\theta)z_n,
(\sin\theta)z_nz',(\sin\theta)z_n^2\bigr), \qquad
0\le\theta\le\frac{\pi}{2},
\]
with possible zero components added, where
$z'=(z_1,\ldots,z_{n-1})$. At $N=3n-3$ and $n\ge4$,
Andrews--Huang--Ji--Yin \cite{AHJY16} completed the rational
classification, with the generalized Whitney map appearing as an
additional model.

In source dimension two, Faran \cite{Fa82} classified rational
proper maps from $\mathbb B^2$ to $\mathbb B^3$ into the following
four inequivalent classes: $ (z,w,0),\ (z,zw,w^2),\
(z^2,\sqrt{2}\,zw,w^2),\  (z^3,\sqrt{3}\,zw,w^3). $ Beyond these
special cases, the classification problem becomes substantially more
complicated. Theorem~\ref{thm:main} provides a classification under
the geometric hypothesis of Gauss map degeneracy without imposing an
upper bound on the target dimension. In particular, since $M_m\ge
M_2=n(n+1)/2$ for $m\ge2$, the theorem implies linearity whenever
\[
n<N<\frac{n(n+1)}{2}.
\]
Moreover, by Huang--Ji--Xu \cite[Corollary 1.4]{HJX05}, the boundary regularity  can be relaxed to $C^3$ for $N$ in this range.
The quadratic Veronese map attains the threshold $N=n(n+1)/2$.

We conclude by outlining the proof. Our approach uses the algebraic
structure of the polarized sphere identity and the geometry of Gauss
fibers. Section~\ref{sec:ball-to-projective} develops the polarized sphere identity and derives
a differential identity associated with a direction in the kernel of
the affine Gauss map. Section~\ref{sec:kernel-divisor} analyzes the factors of the
polarized multiplier by polynomial differentiation and establishes
the resulting branch alternatives. Section~\ref{sec:classification} excludes the mixed
branch and proves linearity in the noncharacteristic case. In the
remaining pure case, first-order identities identify a center inside
the source ball, and radial rigidity yields the Veronese normal
form.

\section{From proper holomorphic ball maps to projective Gauss degeneracy}\label{sec:ball-to-projective}

\subsection{Preliminaries and the homogeneous sphere multiplier}

Let $n,N$ be integers with $2\leq n<N$, and let
\[
F:\B^n\longrightarrow\B^N
\]
be a proper holomorphic map of class $C^{N-n+1}$ up to the boundary.
By Forstneri\v{c}'s rationality theorem \cite[Theorem~1.4]{Fo89},
$F$ is rational. The boundary extension theorem of Cima--Suffridge
\cite{CS90} then implies that $F$ extends holomorphically to a
neighborhood of $\overline{\B^n}$. The boundary regularity
assumption is used only to obtain this reduction. Write
\[
F=\frac{p}{q}, \ \ f:=F|_{\p \BB^n},
\]
where $q$ and the components of the polynomial map $p:\CC^n\to\CC^N$
have no common nonconstant polynomial factor. By
\cite[Theorem~1.4]{Fo89}, we have
\begin{lemma}\label{lem:nopole}
$q$ is nonvanishing in a neighborhood of $\overline{\B^n}$.
\end{lemma}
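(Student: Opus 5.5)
The claim is that in the reduced representation $F=p/q$, with $\gcd(p_1,\dots,p_N,q)=1$, the denominator $q$ has no zeros on a neighborhood of $\overline{\B^n}$. I will argue by contradiction from a zero $\zeta$ of $q$ near $\partial\B^n$. The aim is to produce a common zero of $q$ and all the $p_j$ whose vanishing is forced by the sphere identity rather than by a shared factor.

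First, recall the facts already in hand. $F$ is rational, and by Cima--Suffridge it extends holomorphically to an open neighborhood $U$ of $\overline{\B^n}$. So $p=qF$ on $U$, and on $U$ the function $q$ can vanish only where every $p_j$ vanishes too. Write the zero set of $q$ as $Z(q)=\bigcup_k Z(q_k)$, where the $q_k$ are the irreducible factors of $q$.

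Second, suppose $q(\zeta)=0$ for some $\zeta\in U$. Then the irreducible hypersurface $Z(q_k)$ through $\zeta$ meets $U$ in a set of complex dimension $n-1$, and each $p_j=qF$ vanishes on $Z(q_k)\cap U$. Each $p_j$ is a polynomial, and $Z(q_k)$ is irreducible, so vanishing on a nonempty open piece of $Z(q_k)$ forces vanishing on all of it. This uses the identity principle on the connected regular locus of the irreducible variety. By the Nullstellensatz, $q_k$ then divides every $p_j$. Since $q_k$ also divides $q$, this contradicts the coprimality assumption.

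Hence $q$ has no zeros in $U$, which is a neighborhood of $\overline{\B^n}$. Alternatively, I can cite \cite[Theorem~1.4]{Fo89} directly, since that theorem states that the rational extension has no poles on $\partial\B^n$ and is holomorphic on a neighborhood of the closed ball.

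The main obstacle is the step from "$p_j$ vanishes on a local piece of $Z(q_k)$" to "$q_k$ divides $p_j$". It rests on the irreducibility of $Z(q_k)$ together with the fact that $Z(q_k)\cap U$ is nonempty and of full dimension $n-1$. Both hold because $\zeta\in U\cap Z(q_k)$ and every zero of a nonconstant polynomial lies on a hypersurface of dimension $n-1$.
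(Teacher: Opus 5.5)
Your argument is correct, but it takes a different route from the paper. The paper gives no argument at all: it reads the lemma off directly from \cite[Theorem~1.4]{Fo89}, the citation you mention only as an alternative.

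You instead show that the lemma follows formally from the Cima--Suffridge holomorphic extension together with the reducedness of $p/q$. The steps are:
\begin{enumerate}
\item Once $F$ is holomorphic on a neighborhood $U$ of $\overline{\B^n}$, the identity $p=qF$ propagates from $\B^n$ to $U$.
\item Every $p_j$ then vanishes on the germ at a putative zero $\zeta\in U$ of an irreducible component $Z(q_k)$.
\item Irreducibility and the Nullstellensatz force $q_k\mid p_j$ for all $j$, contradicting coprimality.
\end{enumerate}

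What this buys: a self-contained proof that needs only the extension statement, not the finer pole-free content of Forstneri\v{c}'s theorem. It also rules out interior zeros and boundary zeros in one stroke. In effect it proves that a rational function in lowest terms is holomorphic at no zero of its denominator.

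Two small refinements:
\begin{itemize}
\item Take $U$ connected, for example a ball of radius slightly larger than $1$. The identity theorem then carries $p=qF$ from $\B^n$ to all of $U$.
\item You appeal to the connectedness of the regular locus of an irreducible hypersurface, which is a nontrivial theorem. You can replace it with a dimension count. If $q_k\nmid p_j$, then $Z(q_k)\cap Z(p_j)$ is a proper algebraic subset of the irreducible hypersurface $Z(q_k)$, so it has dimension at most $n-2$. It therefore cannot contain the $(n-1)$-dimensional germ of $Z(q_k)$ at $\zeta$.
\end{itemize}
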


For $Z=(Z_0,\ldots,Z_n)$ and $W=(W_0,\ldots,W_n)$ in $\CC^{n+1}$,
set
\[
H(Z):=-|Z_0|^2+\sum_{j=1}^n|Z_j|^2, \qquad
h(Z,W):=-Z_0W_0+\sum_{j=1}^n Z_jW_j.
\]
Thus $h$ is the complex bilinear polarization of $H$ with
$H(Z)=h(Z,\overline Z)$. Similarly, for $U,V\in\CC^{N+1}$, set
\[
B(U,V):=-U_0V_0+\sum_{j=1}^N U_jV_j.
\]
For a polynomial $R(Z)=\sum_I a_I Z^I$, define
\[
R^\#(W):=\sum_I\overline{a_I}W^I.
\]
We apply this notation componentwise to polynomial maps. For a
polynomial $S(Z,W)$, define
\[
S^\#(Z,W):=\overline{S(\overline W,\overline Z)}.
\]
In particular, $h^\#=h$ and $B^\#=B$.

Consider the incidence variety
\[
\mathcal{I}:=\bigl\{([Z],[W])\in\PP^n\times\PP^n:h(Z,W)=0\bigr\},
\]
with the natural projections $p_1$ and $p_2$. Since $h$ is
nondegenerate, $\mathcal{I}$ is a smooth irreducible hypersurface of
bidegree $(1,1)$.

Let
\begin{equation}\label{liftp}
P=(P_0,\ldots,P_N):\CC^{n+1}\longrightarrow\CC^{N+1}
\end{equation}
be a reduced homogeneous lift of $F$, of common degree $m\geq1$,
chosen so that
\begin{equation}\label{P1z}
 P(1,z)=(q(z),p(z)).   
\end{equation}

Here \emph{reduced} means that the components $P_j$ have no common
nonconstant polynomial factor.

\begin{lemma}\label{lem:sphere}
There exists a unique bihomogeneous polynomial $Q(Z,W)$ of bidegree
$(m-1,m-1)$ such that
\begin{equation}\label{eq:sphere}
B\bigl(P(Z),P^\#(W)\bigr)=h(Z,W)Q(Z,W).
\end{equation}
Moreover, $Q^\#=Q$.
\end{lemma}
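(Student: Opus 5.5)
The plan is to show that the bihomogeneous polynomial $\Phi(Z,W):=B(P(Z),P^\#(W))$, of bidegree $(m,m)$, vanishes identically on the incidence variety $\mathcal I$. Divisibility by $h$ then follows because $h$ is irreducible. Since $n\ge 2$, the form $h$ is a nondegenerate quadratic form in $2n+2\ge 6$ variables, so it is irreducible and the ideal $(h)$ is prime. By the Nullstellensatz, every polynomial vanishing on the affine cone $\{h=0\}\subset\CC^{n+1}\times\CC^{n+1}$ is then divisible by $h$. Once $\Phi=hQ$ holds, comparing bidegrees forces $Q$ to be bihomogeneous of bidegree $(m-1,m-1)$. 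Uniqueness is immediate, since $\CC[Z,W]$ is an integral domain and $h\neq 0$.

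To get the vanishing, I first restrict to the real diagonal. Put $Z=(1,z)$ and $W=\overline{(1,z)}=(1,\bar z)$, and use $P^\#(\overline Z)=\overline{P(Z)}$ together with \eqref{P1z}. This gives
\[
\Phi\bigl((1,z),(1,\bar z)\bigr)=-|q(z)|^2+\|p(z)\|^2=|q(z)|^2\bigl(\|F(z)\|^2-1\bigr).
\]
Because $F$ extends holomorphically past $\overline{\B^n}$ with $q\ne0$ there (Lemma~\ref{lem:nopole}) and maps $\partial\B^n$ into $\partial\B^N$ by properness, this expression vanishes for $z\in\partial\B^n$, that is, wherever $H(1,z)=0$.

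The main obstacle is passing from vanishing on the real hypersurface $\partial\B^n$ to vanishing on the complexification $\{h=0\}$. I handle it by polarization. The polynomial $\phi(z,\xi):=\Phi((1,z),(1,\xi))$ vanishes on the totally real set $\{(z,\bar z):|z|=1\}$ inside the complex hypersurface $\{\langle z,\xi\rangle=1\}$, whose dimension is $2n-1$. Near a boundary point, parametrize this complex hypersurface holomorphically, for instance by solving for $\xi_n$ where $z_n\neq0$. The real points then form a maximal totally real submanifold of that parametrized hypersurface. A holomorphic function vanishing on a maximal totally real submanifold vanishes identically, so $\phi$ vanishes on an open subset of the irreducible hypersurface $\{\sum z_j\xi_j=1\}$, and hence on all of it. Bihomogeneity then extends the vanishing from the chart $Z_0=W_0=1$ to all of $\{h=0\}$: on the affine cone, the points with $Z_0W_0\neq0$ are dense, and both $\Phi$ and $h$ are bihomogeneous.

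Finally I check the symmetry $Q^\#=Q$. Applying $\#$ to $\Phi$ gives $\Phi^\#(Z,W)=\overline{B(P(\overline W),P^\#(\overline Z))}$. Since $B$ has real coefficients and is symmetric, this equals $B(P^\#(W),P(Z))=\Phi(Z,W)$. The same computation gives $h^\#=h$. Because $\#$ is multiplicative, we get $hQ=\Phi=\Phi^\#=hQ^\#$, and cancelling $h$ yields $Q^\#=Q$.
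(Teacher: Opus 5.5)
Your proposal is correct and follows essentially the same route as the paper: show that $B(P,P^\#)$ vanishes on the real null diagonal, propagate this to all of $\{h=0\}$ because that diagonal is maximally totally real in an irreducible complex hypersurface, and then conclude with irreducibility of $h$ and the Nullstellensatz, bidegree comparison, and $\#$-symmetry. The only difference is cosmetic: you run the totally-real argument in the affine chart $Z_0=W_0=1$ and recover the cone by bihomogeneity, whereas the paper argues directly on the cone $\mathcal X$.
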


\begin{proof}
Though this result is well known, we include a proof for
completeness. Set
\[
R(Z,W):=B\bigl(P(Z),P^\#(W)\bigr).
\]

Since $F$ is proper and holomorphically extended across the
boundary, it follows that
\begin{equation}\begin{split}\label{7}
R(Z,\ov{Z})=0\ \text{whenever}\ H(Z)=0.
\end{split}\end{equation}
The affine hypersurface
\[
\mathcal X:=\{(Z,W)\in\CC^{n+1}\times\CC^{n+1}:h(Z,W)=0\}
\]
is irreducible. The real null diagonal $ \{(Z,\overline Z):H(Z)=0,\
Z\neq0\} $
 lies in the smooth locus of
$\mathcal X$ and is maximally totally real there. Hence (\ref{7})
implies $R(Z,W)=0$ on $\mathcal X$.
 Since $h$ is irreducible, it follows from Hilbert's Nullstellensatz that $h$ divides $R$ in the polynomial ring.
Comparing bidegrees yields \eqref{eq:sphere}, and the quotient $Q$
is unique.

Finally, $R^\#=R$ and $h^\#=h$. Applying $\#$ to \eqref{eq:sphere}
gives $hQ^\#=hQ$, hence $Q^\#=Q$.
\end{proof}

\begin{lemma}\label{lem:positivity}
For every nonzero $Z\in\CC^{n+1}$ satisfying $H(Z)=0$,
\[
Q(Z,\overline Z)>0.
\]
In particular, $h\nmid Q$.
\end{lemma}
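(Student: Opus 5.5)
The plan is to reduce to affine coordinates on the sphere and obtain the sign of $Q$ from the Hopf boundary lemma applied to $\|F\|^2-1$.

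\emph{Reduction to the sphere.} Let $Z\neq0$ with $H(Z)=0$. If $Z_0=0$, then $\sum_{j\ge1}|Z_j|^2=0$, so $Z=0$; hence $Z_0\neq0$. Write $Z=Z_0(1,z)$, so that $\|z\|=1$, i.e.\ $z\in\p\BB^n$. Since $Q$ has bidegree $(m-1,m-1)$,
\[
Q(Z,\ov Z)=|Z_0|^{2(m-1)}\,Q\bigl((1,z),(1,\ov z)\bigr).
\]
It therefore suffices to show $\phi(z):=Q((1,z),(1,\ov z))>0$ for every $z\in\p\BB^n$.

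\emph{The restricted sphere identity.} Put $W=\ov Z$ with $Z=(1,z)$ in \eqref{eq:sphere} and use \eqref{P1z}. Since $P^\#(\ov Z)=\ov{P(Z)}$, we get for all $z$ near $\ov{\BB^n}$
\[
|q(z)|^2\bigl(\|F(z)\|^2-1\bigr)=-|q|^2+\|p\|^2=\bigl(\|z\|^2-1\bigr)\,\phi(z).
\]
By Lemma~\ref{lem:nopole}, $q\neq0$ near $\ov{\BB^n}$. Hence, for $\|z\|<1$,
\[
\phi(z)=|q(z)|^2\,\frac{1-\|F(z)\|^2}{1-\|z\|^2}.
\]
Because $\phi$ is a real polynomial, its boundary value is the limit along the radius. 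Set $u:=\|F\|^2-1$ and $r:=\|z\|^2-1$. For $z_0\in\p\BB^n$, l'Hôpital along the radial segment gives
\[
\phi(z_0)=|q(z_0)|^2\,\frac{\p_\nu u(z_0)}{\p_\nu r(z_0)},
\]
where $\nu$ is the outward normal and $\p_\nu r(z_0)=2$.

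\emph{The key step: a strict Hopf lemma.} We need $\p_\nu u(z_0)>0$. The function $u=\sum|F_j|^2-1$ is real analytic and subharmonic on a neighborhood of $\ov{\BB^n}$. It satisfies $u<0$ in $\BB^n$, since $F$ maps into the open ball, and $u=0$ on $\p\BB^n$, by properness and continuity. The ball satisfies the interior sphere condition. The Hopf boundary point lemma for subharmonic functions attaining a strict boundary maximum therefore yields $\p_\nu u(z_0)>0$. Combined with $q(z_0)\neq0$, this gives $\phi(z_0)>0$, and so $Q(Z,\ov Z)>0$. This is the only nontrivial point. The main care needed is to check the hypotheses of Hopf's lemma: $u$ is not constant, it attains its maximum $0$ exactly on the boundary, and it is $C^2$ up to the boundary, which holds by the holomorphic extension across $\p\BB^n$ from Cima--Suffridge.

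\emph{The consequence $h\nmid Q$.} If $h$ divided $Q$, then $Q(Z,\ov Z)=h(Z,\ov Z)\,(\cdots)=H(Z)\,(\cdots)$ would vanish on the null cone $\{H=0\}$. That cone contains nonzero points, for example $(1,1,0,\ldots,0)$, contradicting the positivity just proved.
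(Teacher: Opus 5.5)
Your proof is correct and follows essentially the same route as the paper. You reduce to $Z=Z_0(1,z)$ with $z\in\p\BB^n$ by bihomogeneity, express $Q((1,z),(1,\ov z))$ as $|q|^2(\|F\|^2-1)/(\|z\|^2-1)$, and get strict positivity from the Hopf lemma applied to the subharmonic function $\|F\|^2-1$; the only cosmetic difference is that you take the boundary value by a radial difference quotient, where the paper writes $\rho\circ F=r\lambda$ by real-analytic division and identifies $\lambda=\p_\nu u/\p_\nu r$ on the sphere.
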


\begin{proof}
Set
\[
r(z):=\|z\|^2-1,\qquad \rho(w):=\|w\|^2-1.
\]
Because $F$ extends holomorphically across $\partial\B^n$ and maps
$\partial\B^n$ into $\partial\B^N$, real-analytic division gives
\begin{equation}\label{rhof}
\rho(F(z))=r(z)\lambda(z)
\end{equation}
near $\partial\B^n$, where $\lambda$ is real-valued and
real-analytic. The function $u:=\rho\circ F$ is subharmonic,
negative in $\B^n$, and zero on $\partial\B^n$. The Hopf
 lemma gives $\partial_\nu u>0$ on $\partial\B^n$,
where $\nu$ is the outward unit normal. Consequently,
\begin{equation}\label{lamda}
\lambda=\frac{\partial_\nu u}{\partial_\nu r}>0 \qquad\text{on
}\partial\B^n.
\end{equation}

By \eqref{P1z}, \eqref{eq:sphere} and \eqref{rhof}, we
obtain
\[
\begin{aligned}
r(z)Q\bigl((1,z),(1,\overline z)\bigr) &=\|p(z)\|^2-|q(z)|^2
=|q(z)|^2\rho(F(z))= |q(z)|^2 r(z)\lambda(z).
\end{aligned}
\]
Canceling $r(z)$ away from the sphere and then using continuity
gives
\[
Q\bigl((1,z),(1,\overline z)\bigr)=|q(z)|^2\lambda(z)>0 \qquad
\text{for}\ z\in\partial\B^n.
\]
The positivity follows from Lemma~\ref{lem:nopole} and \eqref{lamda}.

If $Z\neq0$ and $H(Z)=0$, then $Z_0\neq0$ and
$z=(Z_1/Z_0,\ldots,Z_n/Z_0)\in\partial\B^n$. Bihomogeneity now
yields
\[
Q(Z,\overline Z) =|Z_0|^{2m-2}Q\bigl((1,z),(1,\overline z)\bigr)>0.
\]
If $h$ divided $Q$, the left-hand side would vanish on the null
cone, contradicting this strict positivity.
\end{proof}

\subsection{From the CR Gauss map to the algebraic Gauss map}\label{subsec:boundary-projective}

This section proves the boundary-to-projective passage used in the
rest of the paper.
Let $F$ be defined in the previous section. We first show that $F$
is an immersion.
\begin{lemma}\label{rank}
Under the above hypothesis on $F$, $DF_z$ has complex rank $n$ at every
$z\in\partial\B^n$.
In particular, $f$ is a CR immersion.
\end{lemma}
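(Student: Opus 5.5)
We prove that $DF_z$ has rank $n$ at each boundary point by showing that the restriction of $DF_z$ to the complex tangent space $T^{1,0}_z\partial\B^n$ is injective and that $DF_z$ does not map the complex normal direction into its image. Both facts come from differentiating the identity \eqref{rhof},
\[
\|F(z)\|^2-1=(\|z\|^2-1)\lambda(z),
\]
near $\partial\B^n$, together with the strict positivity $\lambda>0$ on $\partial\B^n$ from \eqref{lamda}.

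\emph{Step 1 (tangential injectivity).} Fix $z\in\partial\B^n$ and apply $\partial\bar\partial$ to \eqref{rhof}. The left side gives $\sum_j \partial F_j\wedge\overline{\partial F_j}$, that is, the Hermitian form $v\mapsto\|DF_z v\|^2$. On the right, $r(z)=0$ kills the term $r\,\partial\bar\partial\lambda$, leaving
\[
\lambda\,\partial\bar\partial r+\partial r\wedge\bar\partial\lambda+\partial\lambda\wedge\bar\partial r .
\]
For $v\in T^{1,0}_z\partial\B^n$, that is, with $\sum_j\bar z_j v_j=0$, we have $\partial r(v)=0$, so the mixed terms vanish. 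We are left with
\[
\|DF_z v\|^2=\lambda(z)\|v\|^2 .
\]
Since $\lambda(z)>0$, $DF_z$ is injective on the complex tangent space.

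\emph{Step 2 (normal direction).} Apply $\partial$ to \eqref{rhof} and evaluate at $z\in\partial\B^n$:
\[
\sum_j\overline{F_j(z)}\,dF_j=\lambda(z)\sum_j\bar z_j\,dz_j .
\]
In other words, $\langle DF_z v,F(z)\rangle=\lambda(z)\langle v,z\rangle$ for all $v\in\CC^n$. Take $v\in T^{1,0}_z\partial\B^n$; then $\langle DF_z v,F(z)\rangle=0$, so $DF_z$ maps the complex tangent space into $F(z)^\perp$. Now take $v=z$; then $\langle DF_z z,F(z)\rangle=\lambda(z)\neq0$, so $DF_z z\notin F(z)^\perp$. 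Hence $DF_z z$ is not in the image of $T^{1,0}_z\partial\B^n$, and combined with Step 1 this gives $\rk DF_z=n$.

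\emph{Step 3.} Since $F$ is holomorphic, $f=F|_{\partial\B^n}$ is CR. Step 1 shows that $df$ is injective on the complex tangent bundle. The real transversal direction $iz$ is sent to $i\,DF_z z$, whose component along the normal $F(z)$ of $\partial\B^N$ is nonzero by Step 2. So $df$ is injective and $f$ is a CR immersion.

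All the hard input is already contained in the positivity \eqref{lamda}, which came from the Hopf lemma. The only point that needs care is the bookkeeping in Step 1: one must check that the cross terms involving $\partial r$ vanish on complex tangent vectors and that the $r\,\partial\bar\partial\lambda$ term drops out because $r(z)=0$.
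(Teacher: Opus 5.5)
Your argument is correct and is essentially the paper's proof: both apply $\partial$ and $\partial\bar\partial$ to \eqref{rhof} at boundary points and use $\lambda>0$, the paper merely phrasing it as a kernel argument (if $DF_z v=0$ then $\langle v,z\rangle=0$, so $v$ is complex tangential, and then $\|DF_zv\|^2=\lambda\|v\|^2$ forces $v=0$) rather than your direct-sum decomposition. One small wording fix in Step 3: $i\,DF_z z=df(iz)$ is tangent to $\partial\B^N$, so its component along the real normal $F(z)$ is zero; what is nonzero is the Hermitian component $\langle i\,DF_z z,F(z)\rangle=i\lambda(z)$, i.e.\ the component along $iF(z)$ --- and in any case injectivity of $df$ is immediate once $DF_z$ has complex rank $n$.
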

\begin{proof}
Applying $\partial$ and $\p\bar\partial$ to (\ref{rhof}), respectively, we obtain at each point $z\in\partial\mathbb B^n$
\begin{equation*}\begin{split} 
\p \rho \circ \p F (v)=\lambda (z) \p r (v)\quad &\text{for } \ v\in T_z^{1,0}\CC^n,\\ \|\p F_z(\hat{v})\|^2=\lambda(z)\|\hat{v}\|^2 \quad &\text{for }\ \hat{v}\in T_z^{1,0}\p\BB^n. \end{split}\end{equation*}
If $\partial F_z(v)=0$, then the first identity implies that $\partial r_z(v)=0$, and hence
$v\in T_z^{1,0}\partial\mathbb B^n$. The second identity then yields
 $v=0$,  and the lemma follows immediately.
\end{proof}

\subsubsection{CR Gauss map}

We next recall the boundary Gauss map in the sphere model.

For $1\leq j<k\leq n$, define
\[
L_{j,k}:=\overline z_k\frac{\partial}{\partial z_j}
         -\overline z_j\frac{\partial}{\partial z_k},
\qquad T:=\operatorname{Re}\Big( i\sum_{j=1}^n
z_j\frac{\partial}{\partial z_j}\Big).
\]
At each $z\in\partial\B^n$, the vectors $\{L_{j,k}(z):\ 1\leq
j<k\leq n\}$ span $T_z^{1,0}\partial\B^n$, and the complexified real
tangent space satisfies
\[
\CC T_z\partial\B^n =\Span_{\CC}\bigl\{
L_{j,k}(z),\overline{L_{j,k}}(z),T(z):1\leq j<k\leq n \bigr\}.
\]
Since $\overline{L_{j,k}}f=0$, we define the CR Gauss map by
\begin{align}\label{eq:CRGauss}
\gamma_f(z):= \Span_{\CC}\big\{ L_{j,k}f(z),Tf(z):1\leq j<k\leq n
\big\}\in\text{Gr}(n,\CC^N),
\end{align}
where $\text{Gr}(n,\CC^N)$ denotes the Grassmannian of complex
$n$-dimensional linear subspaces of $\CC^N$. Equivalently,
\[
\gamma_f(z)=\im DF_z.
\]
We call $\gamma_f$ degenerate if its generic real fibers have
positive dimension, or equivalently if its generic real differential
rank is less than $2n-1$.

For comparison with the Heisenberg formulation used in
Cheng--Ji--Liu \cite{CJL13} and Yin--Yuan--Zhang \cite{YYZ20}, set
\[
\HH^n:=\{(z,w)\in\C^{n-1}\times\C:\operatorname{Im}w>\|z\|^2\}.
\]
The Cayley transformation
\begin{equation}\label{eq:cayley}
\rho_n:\HH^n\longrightarrow\BB^n, \qquad \rho_n(z,w)=
\left(\frac{2z}{1-iw},\frac{1+iw}{1-iw}\right)
\end{equation}
is biholomorphic and extends holomorphically across every finite
point of $\partial\HH^n$. Its boundary restriction identifies
$\partial\HH^n$ with $\partial\BB^n\setminus\{(0,\ldots,0,-1)\}$.
Wherever the composition is defined, the corresponding Heisenberg
representative is
\[
\widetilde F=\rho_N^{-1}\circ F\circ\rho_n.
\]
The degeneracy of the CR Gauss map defined on the sphere is
equivalent to that defined on the Heisenberg hyperplane. Our
arguments are carried out only in the sphere model, using the
homogeneous lift $P$ and the polarized identity \eqref{eq:sphere}.

\subsubsection{Projective Gauss map}
For the homogeneous lift $P$ defined by
\eqref{liftp}, on the locus where $[P]$ is defined and immersive, define the associated projective Gauss map
\[
\Gamma_{\rm proj}([Z])
:=
\Span\{P(Z),DP_Z(\CC^{n+1})\}
\in \operatorname{Gr}(n+1,\CC^{N+1}).
\]
Homogeneity of $P$ makes this subspace independent of the choice of
representative $Z$ of $[Z]$, while Euler's identity
$DP_Z(Z)=mP(Z)$ identifies the radial direction with the line spanned
by $P(Z)$. Let
\begin{align}\label{eq:projvarP}
X:=\overline{[P](\PP^n\setminus\operatorname{Bs}[P])}^{\,\mathrm{Zar}}\subset\PP^N
\end{align}
where the image is taken over the locus on which the rational map
$[P]:\PP^n\dashrightarrow\PP^N$ is defined. If $[P]$ is immersive at
$[Z]$ and $x=[P]([Z])\in X_{\mathrm{reg}}$, then by definition, we have
\[
\Gamma_{\rm proj}([Z])
=
\Span\{P(Z),DP_Z(\CC^{n+1})\}
=
\widehat T_xX.
\]
Recalling that $\Gamma_X$
denotes the projective Gauss map of $X$ introduced in \eqref{eq:projGauss}, we obtain
\begin{equation}\label{eq:Gamma-factor}
\Gamma_{\rm proj}=\Gamma_X\circ[P]
\end{equation}
wherever both sides are defined and $[P]$ is immersive.

\begin{proposition}\label{prop:bridge}
Under the assumption in Theorem~\ref{thm:main}, $[P]$ has generic projective rank $n$,
so that the projective
variety $X$ defined in \eqref{eq:projvarP} has dimension $n$. The following are equivalent:
\begin{enumerate}
\item[(i)] The CR Gauss map $\gamma_f$ is generically degenerate on
$\partial\mathbb B^n$.

\item[(ii)] The projective Gauss map $\Gamma_{\rm proj}$ is
generically degenerate; equivalently,
\begin{align}\label{eq:rank<n}
\operatorname{rank}_{\CC}d\Gamma_{\rm proj}<n
\end{align}
generically on $\PP^n$.

\item[(iii)] The projective Gauss map $\Gamma_X$ of $X$ is generically degenerate.
\end{enumerate}
\end{proposition}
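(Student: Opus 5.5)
We first establish the rank statement and then prove (ii)$\Leftrightarrow$(iii) and (i)$\Leftrightarrow$(ii).

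\emph{Generic rank of $[P]$.} By Lemma~\ref{rank}, $DF_z$ has rank $n$ on $\partial\B^n$, so $F$ is an immersion on an open set. In the affine chart $Z_0\neq0$, $W_0\neq0$, the map $[P]$ is $z\mapsto[q(z):p(z)]$. By Lemma~\ref{lem:nopole}, $q\ne0$ near $\overline{\B^n}$, so on that region $[P]$ is the map $F$ written in affine coordinates. Hence $[P]$ is immersive there, and its generic rank is $n$. Since $X$ is the Zariski closure of the image of the irreducible variety $\PP^n$ and the generic rank is $n$, we get $\dim X=n$.

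\emph{(ii)$\Leftrightarrow$(iii).} Use the factorization \eqref{eq:Gamma-factor}, $\Gamma_{\rm proj}=\Gamma_X\circ[P]$. Because $[P]$ is a generically finite dominant map onto $X$ with generically surjective differential, the chain rule gives $\rk d\Gamma_{\rm proj}=\rk d\Gamma_X$ at corresponding generic points. So one map is generically degenerate exactly when the other is. Both loci are Zariski open, so "generic" makes sense on each side.

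\emph{(i)$\Leftrightarrow$(ii).} At $z\in\partial\B^n$ with $Z=(1,z)$, the space $\Gamma_{\rm proj}([Z])$ is the affine cone over the embedded tangent plane at $F(z)$, i.e.\ $\Span\{(1,F(z)),(0,v):v\in\im DF_z\}$. The subspace $\gamma_f(z)=\im DF_z$ is recovered from it by intersecting with the hyperplane $\{U_0=0\}$. Conversely, $\Gamma_{\rm proj}$ is recovered from $\gamma_f$ together with $F(z)$.

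Here the key point is that $F(z)\in\im DF_z$ need not hold, so the two maps differ. Degeneracy of $\gamma_f$ along a real direction $v$ therefore does not immediately give degeneracy of $\Gamma_{\rm proj}$. I would handle this through the correspondence
\[
\Gamma\mapsto \gamma=\Gamma\cap\{U_0=0\}.
\]
This is a map between Grassmannians that is smooth on the open set of $\Gamma$ transverse to $\{U_0=0\}$. Its fiber over a given $\gamma$ consists of the $(n+1)$-planes $\gamma\oplus\C(1,w)$ with $w$ determined modulo $\gamma$. Along a direction in $\ker d\gamma_f$, the derivative of $\Gamma_{\rm proj}$ reduces to the variation of the class $F(z)\bmod\gamma_f(z)$. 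That variation is $DF_z(v)\bmod \gamma_f(z)=0$. So $\ker_{\mathbb R}d\gamma_f\subset\ker_{\mathbb R}d\Gamma_{\rm proj}|_{\partial\B^n}$. The reverse inclusion holds because $\gamma$ is a smooth function of $\Gamma$ on the transverse open set.

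Hence $\ker_\mathbb R d\gamma_f=\ker_\mathbb R d(\Gamma_{\rm proj}|_{\partial\B^n})$ on $\partial\B^n$, and it remains to pass between the real hypersurface and complex rank.

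\emph{Main obstacle.} The main obstacle is this last passage from real fibers on $\partial\B^n$ to complex rank on $\PP^n$. Since $\Gamma_{\rm proj}$ is holomorphic, $K=\ker d\Gamma_{\rm proj}$ is a complex subspace of $T\PP^n$.

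For (ii)$\Rightarrow$(i): if $\dim_\C K\ge1$ generically, then $K\cap T_z\partial\B^n$ has real dimension at least $1$ (at least $2\dim_\C K-1$). Note that $\partial\B^n$ is not contained in any proper complex subvariety, so generic points of $\PP^n$ meet it in a dense open set.

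For (i)$\Rightarrow$(ii): suppose $\rk_\C d\Gamma_{\rm proj}=n$ generically. Then $K=0$ on a dense open subset of $\partial\B^n$, again because the real hypersurface is not contained in the analytic degeneracy locus. So the real differential restricted to $T\partial\B^n$ is injective there, and $\gamma_f$ is nondegenerate, which contradicts (i).

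The delicate checks are the transversality of $\Gamma_{\rm proj}$ to $\{U_0=0\}$, which holds because its $U_0$-component $q\neq0$, and the analyticity arguments guaranteeing that the generic loci are compatible.
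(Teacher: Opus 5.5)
Your proposal is correct and takes essentially the paper's route: your quotient-bundle description of $\Gamma\mapsto\Gamma\cap\{U_0=0\}$ repackages the paper's step of differentiating the generators $(1,F(z))$ and $(0,dF_z\xi)$, and both arguments rest on the same two facts. First, the derivative $(0,DF_z v)$ of the first generator lies in $\Gamma_{\rm proj}$ automatically, so only the condition $D^2F_z(v,\xi)\in E_z$ matters. Second, holomorphic $n\times n$ minors that vanish on an open piece of $\partial\B^n$ vanish identically. The one difference is that you prove (i)$\Rightarrow$(ii) by contrapositive, whereas the paper shows directly that the minors of $d\Gamma_{\rm proj}$ vanish on a boundary patch and then continues them.
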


\begin{proof}
On the affine chart $Z=(1,z)$, we have
$[P(1,z)]=[1:F(z)]$. Since $F$ has rank $n$ on a nonempty boundary
patch, $[P]$ has generic projective rank $n$, and hence $\dim X=n$.
Moreover, on the dense open set where $[P]$ is immersive and its
image lies in $X_{\rm reg}$, \eqref{eq:Gamma-factor} gives
\[
\Gamma_{\rm proj}=\Gamma_X\circ[P].
\]
There $d[P]$ is an isomorphism onto the tangent space of $X$, so
$d\Gamma_{\rm proj}$ and $d\Gamma_X$ have the same rank. Thus
\emph{(ii)} and \emph{(iii)} are equivalent.

Assume \emph{(i)}. Choose a nonempty relatively open boundary patch
contained in the maximal-rank stratum of $\gamma_f$ on which its
fibers have positive dimension. For every $z$ in this patch there is
$0\ne v\in T_z(\partial\mathbb B^n)$ such that
$d\gamma_{f,z}(v)=0$. Set $E_z=dF_z(\CC^n)$. Since
\[
T_{E_z}\operatorname{Gr}(n,\CC^N)
\simeq \operatorname{Hom}(E_z,\CC^N/E_z),
\]
differentiating $dF_z(\xi)$ in the direction $v$ gives
\[
d\gamma_{f,z}(v)(dF_z\xi)
=[D^2F_z(v,\xi)]\in\CC^N/E_z,
\]
and therefore
\begin{equation}\label{eq:boundary-second}
D^2F_z(v,\xi)\in E_z
\qquad\text{for every }\xi\in\CC^n.
\end{equation}
On the same affine chart, Lemma~\ref{lem:nopole} gives $q(z)\ne0$ and
$P(1,z)=q(z)(1,F(z))$, hence
\begin{equation}\label{eq:affine-projective-plane}
\Gamma_{\rm proj}([1:z])
=\Span\bigl\{(1,F(z)),(0,dF_z(\CC^n))\bigr\}.
\end{equation}
Differentiating these generators in the direction $v$, we get that their derivatives lie in $\Gamma_{\rm proj}([1:z])$ by
\eqref{eq:boundary-second}; hence $d\Gamma_{\rm proj,[1:z]}(v)=0$.
Thus \eqref{eq:rank<n} holds on this boundary patch. In local
Grassmannian coordinates, every $n\times n$ minor of
$d\Gamma_{\rm proj}$ is holomorphic near the patch and vanishes on a
relatively open subset of the real hypersurface $\partial\mathbb B^n$,
so it vanishes identically in a complex neighborhood. Since
$\Gamma_{\rm proj}$ is rational on the irreducible variety $\PP^n$,
these minors vanish identically as rational functions. Hence
\eqref{eq:rank<n} holds generically, proving \emph{(ii)}.

Conversely, assume \emph{(ii)}. On a nonempty boundary patch where
$F$ has rank $n$ and $\Gamma_{\rm proj}$ is regular,
$\ker_{\CC}d\Gamma_{\rm proj}$ has positive complex dimension. Since
$T_z(\partial\mathbb B^n)$ has real codimension one in $\CC^n$, there
is a nonzero
\[
v\in \ker_{\CC}d\Gamma_{\rm proj,[1:z]}
\cap T_z(\partial\mathbb B^n).
\]
Using \eqref{eq:affine-projective-plane}, we see that the condition $d\Gamma_{\rm proj,[1:z]}(v)=0$ implies
$D^2F_z(v,\xi)\in E_z$ for every $\xi\in\CC^n$. By the
Grassmannian differential formula above, $d\gamma_{f,z}(v)=0$.
Therefore $\gamma_f$ has real differential rank $<2n-1$ on a
nonempty boundary patch, and hence is generically degenerate. This
proves \emph{(i)} and completes the equivalence.
\end{proof}

\section{Gauss-kernel geometry, divisor separation, and branch dichotomy}\label{sec:kernel-divisor}

By Proposition~\ref{prop:bridge}, after shrinking to a nonempty
Zariski open set $U\subset\PP^n$ on which $[P]$ is immersive and
$\Gamma_{\rm proj}$ is regular of constant rank, the bundle
\[
\cK:=\ker d\Gamma_{\rm proj}\subset TU
\]
has positive rank $r:=\rk\cK$.  Choose a rational line field
$\kappa\subset\cK$ and a rational cone representative $v(Z)$ modulo
the radial line $\CC Z$.

\subsection{Gauss-kernel geometry and the canonical operator}\label{subsec:canonical-operator}

We first establish the generic injectivity of the cone differential
and use it to associate a canonical rational endomorphism to the
chosen Gauss-kernel direction.

Let $\kcone=\CC(Z_0,\dots,Z_n)$ be the function field of the source
cone, as fixed above.

\begin{lemma}\label{lem:DPinjective}
The $\kcone$-linear map
\[
DP:\kcone^{n+1}\longrightarrow\kcone^{N+1}
\]
is injective, and $P\in\im DP$.
\end{lemma}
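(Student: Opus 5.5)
The second claim, $P\in\im DP$, follows from Euler's identity. Since $P$ is homogeneous of degree $m\ge1$, we have $DP_Z(Z)=mP(Z)$. Hence $P=DP\bigl(\tfrac1m Z\bigr)$, where $Z=(Z_0,\dots,Z_n)\in\kcone^{n+1}$ is the tautological vector. So the substance is the injectivity of $DP$ over the function field $\kcone$.

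For injectivity, I would reduce to a pointwise rank statement. $DP$ is an $(N+1)\times(n+1)$ matrix of polynomials in $Z$. It is injective over $\kcone$ if and only if some $(n+1)\times(n+1)$ minor is a nonzero polynomial. That in turn holds if and only if $DP_Z$ has rank $n+1$ at a single point $Z\in\CC^{n+1}$. So it suffices to find one point where $DP_Z$ is injective. I would take $Z=(1,z)$ with $z\in\partial\B^n$, or any $z$ near $\overline{\B^n}$; there $q(z)\ne0$ by Lemma~\ref{lem:nopole}. Write $P(1,z)=q(z)(1,F(z))$ and use homogeneity:
\[
P(Z_0,Z')=Z_0^m\,q(Z'/Z_0)\bigl(1,F(Z'/Z_0)\bigr)\quad\text{near }Z_0=1.
\]
The Euler relation shows that the radial direction $Z$ maps to $mP(Z)\neq0$, since its $0$-th component is $q(z)\neq0$. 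Next, restrict to the affine directions $(0,\xi)$. Then $DP_{(1,z)}(0,\xi)=dq_z(\xi)(1,F(z))+q(z)(0,dF_z\xi)$.

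Now suppose $a\,(1,z)+(0,\xi)\in\ker DP_{(1,z)}$. This gives
\[
\bigl(am\,q+dq(\xi)\bigr)(1,F)+q\,(0,dF_z\xi)=0.
\]
The $0$-th component forces $am\,q+dq(\xi)=0$. The remaining components then give $q(z)\,dF_z\xi=0$, so $dF_z\xi=0$. By Lemma~\ref{rank}, $DF_z$ has rank $n$ at $z\in\partial\B^n$, hence $\xi=0$. Then $am\,q(z)=0$ gives $a=0$. So $DP_{(1,z)}$ is injective for $z\in\partial\B^n$, and some $(n+1)$-minor is a nonzero polynomial. This yields injectivity over $\kcone$, completing the proof.

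The only delicate point is the decomposition of $\CC^{n+1}$ into the radial line plus affine directions at $(1,z)$. One must check that the vectors $(1,z)$ and $(0,\xi)$ do span $\CC^{n+1}$, which is immediate. One must also separate the $0$-th coordinate cleanly, and this is exactly where $q\ne0$ from Lemma~\ref{lem:nopole} is needed. Everything else is linear algebra.
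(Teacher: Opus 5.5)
Your proof is correct and has the same structure as the paper's: Euler's identity handles the radial direction and gives $P\in\im DP$, and immersivity handles the directions transverse to $Z$. The only difference is that you check the immersivity input explicitly at a boundary point $(1,z)$, using Lemma~\ref{rank} and Lemma~\ref{lem:nopole}, and then pass to $\kcone$ through a nonvanishing $(n+1)\times(n+1)$ minor; the paper instead invokes generic projective immersion of $[P]$ directly over the function field.
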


\begin{proof}
Generic projective immersion implies that the induced map
\[
d[P]:\kcone^{n+1}/\kcone Z\longrightarrow\kcone^{N+1}/\kcone P
\]
is injective. If $DP(\xi)=0$, then $d[P]([\xi])=0$. By injectivity
of $d[P]$, we have $[\xi]=0$, and hence $\xi=cZ$ for some
$c\in\kcone$.  Euler's identity gives
\[
DP_Z(Z)=m\,P(Z),
\]
hence $0=DP_Z(cZ)=cm\,P(Z)$. Since $P$ is nonzero, it follows that
$c=0$. Thus $DP$ is injective.  Euler also gives
$P(Z)=m^{-1}DP_Z(Z)\in\im DP$.
\end{proof}

\begin{lemma}\label{lem:canonicalA}
Fix the rational cone representative $v$ chosen above. There is a
unique
\[
A\in\operatorname{End}_{\kcone}(\kcone^{n+1})
\]
such that
\begin{equation}\label{eq:canonicalA}
D^2P(v,\xi)=DP(A\xi)
\end{equation}
for every constant vector $\xi\in\mathbb C^{n+1}$.  The coefficients
of $A$ are rational functions of $Z$.
\end{lemma}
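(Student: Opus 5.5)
Uniqueness follows at once from Lemma~\ref{lem:DPinjective}. If $A$ and $A'$ both satisfy \eqref{eq:canonicalA}, then $DP((A-A')\xi)=0$ for every constant $\xi$, and injectivity of $DP$ over $\kcone$ gives $(A-A')\xi=0$. The constant vectors $e_0,\dots,e_n$ form a $\kcone$-basis of $\kcone^{n+1}$, so $A=A'$.

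For existence, I will show that $D^2P(v,\xi)\in\im DP$ over $\kcone$ for each basis vector $\xi=e_j$. Once this holds, injectivity of $DP$ gives a unique $a_j\in\kcone^{n+1}$ with $DP(a_j)=D^2P(v,e_j)$, and $A$ is the matrix with columns $a_j$; extending $\kcone$-linearly makes \eqref{eq:canonicalA} hold for all constant $\xi$. The coefficients are rational because $a_j$ is obtained by solving a linear system over $\kcone$. Concretely, pick $n+1$ rows of $DP$ whose minor is nonzero in $\kcone$ and apply Cramer's rule; the entries of $DP$ and $D^2P(v,e_j)$ are rational, since $P$ is polynomial and $v$ is rational.

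The key step is the membership $D^2P_Z(v,\xi)\in\im DP_Z$, which should encode $v$ lying in the Gauss kernel. On the Zariski open set $U$, $\Gamma_{\rm proj}([Z])=\Span\{P(Z),DP_Z(\CC^{n+1})\}=\im DP_Z$, using Euler's identity, i.e.\ $P\in\im DP$ by Lemma~\ref{lem:DPinjective}. So $\Gamma_{\rm proj}$ lifts to the cone as $Z\mapsto\im DP_Z$, an $(n+1)$-plane whose spanning frame $DP_Z(e_0),\dots,DP_Z(e_n)$ is independent for generic $Z$. By the Grassmannian differential formula already used in Proposition~\ref{prop:bridge}, the derivative of this plane in a direction $u\in\CC^{n+1}$ is the map $DP_Z(\xi)\mapsto[D^2P_Z(u,\xi)]$ into $\CC^{N+1}/\im DP_Z$. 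Because $v$ represents a vector of $\cK=\ker d\Gamma_{\rm proj}$ at $[Z]$, and the lifted plane map is constant along $\CC^*$-orbits, its differential in the direction $v$ vanishes. This gives $D^2P_Z(v,\xi)\in\im DP_Z$ for generic $Z$ and all $\xi$.

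The main obstacle I expect is checking that the choice of representative modulo $\CC Z$ is harmless. Changing $v$ to $v+cZ$ adds $D^2P(Z,\xi)=(m-1)DP(\xi)$, by Euler's identity applied to the degree-$(m-1)$ map $DP(\xi)$, and this lies in $\im DP$. Hence the membership, and the passage from the projective kernel to the cone derivative, are well defined; the choice of representative only shifts $A$ by $c(m-1)\mathrm{Id}$.

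The last step is passing from pointwise membership on $U$ to membership over $\kcone$. On the open set $U$ the solution $a_j(Z)$ is unique and is given by the Cramer formula above, so it agrees with the $\kcone$-solution, and the identity then holds over the whole function field.
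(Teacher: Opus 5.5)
Your proposal is correct and follows essentially the same route as the paper: the vanishing Grassmannian differential of $\Gamma_{\rm proj}$ along $v$ gives $D^2P(v,\xi)\in\Span\{P,\im DP\}=\im DP$. Injectivity of $DP$ over $\kcone$ (Lemma~\ref{lem:DPinjective}) and a nonzero maximal minor then produce a unique $A$ with rational entries. Your extra checks only make explicit what the paper leaves implicit: independence of the representative modulo $\CC Z$ via $D^2P(Z,\xi)=(m-1)DP(\xi)$, and the passage from pointwise membership on $U$ to membership over $\kcone$.
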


\begin{proof}
Since $v$ represents a direction in $\ker d\Gamma_{\rm proj}$, the
Grassmannian differential of $\Gamma_{\rm proj}$ vanishes in the direction
$v$. Differentiating the generator $DP(\xi)$, with $\xi$ constant,
therefore gives
\[
D^2P(v,\xi)\in\Span\{P,DP(\kcone^{n+1})\}.
\]
By Lemma~\ref{lem:DPinjective}, $P\in\im DP$, so the right-hand side
is simply $\im DP$. By injectivity of $DP$, for each constant $\xi$
there is a unique $A\xi\in \kcone^{n+1}$ satisfying
$D^2P(v,\xi)=DP(A\xi)$. The resulting assignment $\xi\mapsto A\xi$
is $\mathbb C$-linear and hence extends uniquely to a
$\kcone$-linear endomorphism
$A\in\operatorname{End}_{\kcone}(\kcone^{n+1})$. A rational left
inverse obtained from any nonzero $(n+1)\times(n+1)$ minor of $DP$
shows that the entries of $A$ are rational.
\end{proof}

We now combine the canonical endomorphism $A$ with the polarized
sphere identity to obtain the differential operator that will be
used throughout the incidence argument.

For a constant vector $\xi\in\mathbb C^{n+1}$, put
\[
\ell_v:=h(v,W),\qquad \ell_\xi:=h(\xi,W),\qquad \mu_\xi:=h(A\xi,W).
\]
For any $G\in\kcone[W_0,\dots,W_n]$, define the \emph{canonical operator} $\Dop$ by
\begin{equation}\label{eq:operator}
\Dop_\xi G := \mu_\xi G -\ell_vD_\xi G -\ell_\xi D_vG -hD^2_{\xi,v}G
+hD_{A\xi}G.
\end{equation}
Here all directional derivatives involving $v$ or $A\xi$ are
evaluated with the corresponding vector frozen at the base point; in
particular, $D^2_{\xi,v}$ denotes the Hessian evaluated on
$(v,\xi)$.

\begin{lemma}\label{lem:exactoperator}
For every constant $\xi\in \mathbb C^{n+1}$,
\begin{equation}\label{eq:DQ0}
\Dop_\xi Q=0
\end{equation}
as an identity in $\kcone[W_0,\dots,W_n]$.
\end{lemma}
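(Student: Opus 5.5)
Differentiate the polarized sphere identity \eqref{eq:sphere} twice in $Z$, once along $v$ and once along a constant $\xi$, with $W$ held fixed. Then use \eqref{eq:canonicalA} to replace the second derivative of $P$ by a first derivative. Since $P^\#(W)$ does not depend on $Z$, the left-hand side $B(P(Z),P^\#(W))$ is linear in $P(Z)$. Its mixed second derivative is
\[
B\bigl(D^2P(v,\xi),P^\#(W)\bigr)=B\bigl(DP(A\xi),P^\#(W)\bigr)=D_{A\xi}\bigl[B(P(Z),P^\#(W))\bigr],
\]
where $D_{A\xi}$ differentiates in $Z$ with $A\xi$ frozen at the base point. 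Applying \eqref{eq:sphere} again turns this into $D_{A\xi}(hQ)$. This gives the identity
\[
D^2_{\xi,v}(hQ)=D_{A\xi}(hQ),
\]
in which all derivatives are in the $Z$-variable and the vectors $v$ and $A\xi$ are frozen.

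Next, expand both sides with the Leibniz rule, using that $h$ is bilinear. We have $D_\xi h=h(\xi,W)=\ell_\xi$, $D_vh=\ell_v$, $D^2h=0$ and $D_{A\xi}h=\mu_\xi$. The identity becomes
\[
\ell_v D_\xi Q+\ell_\xi D_vQ+hD^2_{\xi,v}Q=\mu_\xi Q+hD_{A\xi}Q,
\]
which is exactly $\Dop_\xi Q=0$ as defined in \eqref{eq:operator}.

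The remaining issue is the meaning of "evaluated at the base point". In \eqref{eq:operator} the derivatives are written as acting on $G\in\kcone[W]$. I read $D$ as differentiation in the $Z$-variables of the bihomogeneous polynomial $Q(Z,W)$, with coefficients then regarded in $\kcone$. The subtle point is that $v=v(Z)$ and $A\xi$ are rational in $Z$. So I would carry out the computation pointwise: fix a generic $Z_*$ in the Zariski open set where $v$ and $A$ are regular, and differentiate along the constant vectors $v(Z_*)$ and $A(Z_*)\xi$.

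At that point, \eqref{eq:canonicalA} holds as an identity of vectors in $\CC^{N+1}$. Since $D^2P$ here is the Hessian contracted with the frozen vectors, the two-step differentiation above is legitimate, and no derivatives of $v$ or of $A$ appear. The resulting identity holds for all $W$ and all generic $Z_*$, so it holds in $\kcone[W]$.

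I expect this bookkeeping, namely that the Hessian with frozen vectors gives no extra $Dv$ terms, to be the only delicate step. It is handled by the pointwise evaluation just described.
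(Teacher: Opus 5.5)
Your proposal is correct and is essentially the paper's argument. Both differentiate \eqref{eq:sphere} along $v$ and $\xi$, replace $D^2P(v,\xi)$ by $DP(A\xi)$ using \eqref{eq:canonicalA}, and compare the result with the derivative of \eqref{eq:sphere} along $A\xi$. The only difference is bookkeeping: the paper lets $v$ vary and cancels the resulting $D_\xi v$ terms with the derivative of \eqref{eq:sphere} along $D_\xi v$, whereas you freeze $v$ and $A\xi$ at a generic base point so those terms never appear.
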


\begin{proof}
Differentiating \eqref{eq:sphere} in the  vectors $v$, $D_\xi v$ and
$A\xi$, respectively, yields
\begin{align}
&B(DP(v),P^\#)=\ell_vQ+hD_vQ,\ \ \label{eq:firstv} \\
&B(DP(D_\xi
v),P^\#)=h(D_\xi v,W)Q+hD_{D_\xi v}Q, \label{2}     \\
&B(DP(A\xi),P^\#)=\mu_\xi Q+hD_{A\xi}Q.  \label{3}
\end{align}
Since $v$ varies with $Z$, a direct computation shows that
\[
D_\xi(DP(v))=D^2P(v,\xi)+DP(D_\xi v),\ \
D_\xi(D_vQ)=D^2_{\xi,v}Q+D_{D_\xi v}Q.
\]
Hence by differentiating \eqref{eq:firstv} in the  direction $\xi$,
we obtain
\begin{equation}\begin{split}\label{eq:secondidentity}
&B(D^2P(v,\xi),P^\#)+ B(DP(D_\xi v),P^\#)\\
=&h(D_\xi v,W)Q+\ell_vD_\xi Q+\ell_\xi D_vQ+hD_{D_\xi
v}Q+hD^2_{\xi,v}Q.
\end{split}\end{equation}
Together with (\ref{2}), we get
\begin{equation}\label{eq:2}
B(D^2P(v,\xi),P^\#) =\ell_vD_\xi Q+\ell_\xi D_vQ+hD^2_{\xi,v}Q.
\end{equation}
Combining (\ref{3}) with Lemma~\ref{lem:canonicalA} yields
\begin{equation}\begin{split}\label{4}
B(D^2P(v,\xi),P^\#)=\mu_\xi Q+hD_{A\xi}Q.
\end{split}\end{equation}
Now $\Dop_\xi Q=0$ follows directly from (\ref{eq:operator}),
(\ref{eq:2}) and (\ref{4}).
\end{proof}

\subsection{Residual logarithmic equation and divisor separation}

We next extract the characteristic divisor associated with $\kappa$
and use the exact operator identity to control the remaining divisor
of $Q|_{\cI}$.

On the domain of the rational line field $\kappa$, the condition
\[
h(v(Z),W)=0
\]
defines a hyperplane in each incidence fiber $p_1^{-1}([Z])$.
Denote by $\Delta_\kappa$ the Zariski closure in $\cI$ of
\[
\Delta_\kappa^\circ := \left\{ ([Z],[W])\in p_1^{-1}(U): h(v(Z),W)=0
\right\}.
\]
Here $U$ is a Zariski open subset of $\mathbb{P}^n$ on which $[P]$ is immersive and
$\Gamma_{\rm proj}$ is regular of constant rank.

\begin{lemma}\label{lem:intrinsic}
The indeterminacy locus of $\kappa$ has codimension at least two.
The closure $\Delta_\kappa$ is an irreducible divisor, independent
of the chosen cone representative of $\kappa$, which is defined as the characteristic divisor of $\kappa$.
\end{lemma}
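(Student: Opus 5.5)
The plan is to treat the three assertions separately: codimension of the indeterminacy locus, independence of the representative, and irreducibility of $\Delta_\kappa$ as a divisor.

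\emph{Codimension of the indeterminacy locus.} A rational line field $\kappa$ on $\PP^n$ is a rational section of $\PP(T\PP^n)$. Via the Euler sequence
\[
0\to\cO\to\cO(1)^{n+1}\to T\PP^n\to0,
\]
it is represented by a homogeneous polynomial vector field $v(Z)$ modulo $\CC Z$. We clear denominators in $v$ and then divide out the greatest common divisor of the components of $v$. Here we work modulo $Z$, i.e.\ with the $2\times2$ minors $Z_iv_j-Z_jv_i$, and remove any common factor of these minors. The locus where the class of $v$ in $T\PP^n$ vanishes is then cut out by the minors, which have no common factor. Hence this locus has codimension at least two. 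Away from it, $\kappa$ is a well-defined line in $T_{[Z]}\PP^n$, so the indeterminacy locus of $\kappa$ is contained in a codimension-two set.

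\emph{Independence of the representative.} Any other cone representative has the form $v'=cv+aZ$ with $c\in\kcone^*$ and $a\in\kcone$. Since $h(Z,W)=0$ on $\cI$,
\[
h(v',W)=c\,h(v,W) \qquad\text{on } \cI.
\]
So over the locus where $c$ is regular and nonzero, the two hyperplanes $\{h(v,W)=0\}$ and $\{h(v',W)=0\}$ in each fiber $p_1^{-1}([Z])$ agree. Their Zariski closures therefore coincide, and $\Delta_\kappa$ depends only on $\kappa$.

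\emph{Irreducibility and divisoriality.} Over the open set $U'\subset U$ where $\kappa$ is regular, $v(Z)\notin\CC Z$. Hence $\{W: h(v,W)=0=h(Z,W)\}$ is a codimension-two linear subspace of $\CC^{n+1}$, since $h$ is nondegenerate and $v,Z$ are independent. Thus $\Delta_\kappa^\circ\cap p_1^{-1}(U')$ is a $\PP^{n-2}$-bundle over the irreducible set $U'$. It is a locally trivial projective subbundle, cut out by the linearly independent forms $h(Z,\cdot)$ and $h(v(Z),\cdot)$, and so it is irreducible of dimension $n+(n-2)=2n-2=\dim\cI-1$. Its Zariski closure is therefore an irreducible divisor in $\cI$. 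The part of $\Delta_\kappa^\circ$ lying over $U\setminus U'$ either lies in this closure or has dimension at most $(n-2)+(n-1)<2n-2$. We define $\Delta_\kappa$ as the closure of the part over $U'$; the two definitions agree because $h(v,W)$ vanishing over a lower-dimensional base contributes no extra divisorial component. The only delicate point is precisely this: ensuring that no spurious component over the indeterminacy locus is picked up. This is handled by the codimension-two statement together with taking closure over $U'$.
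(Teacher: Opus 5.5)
Your proof is correct. Your arguments for irreducibility (a $\PP^{n-2}$-bundle over an irreducible base) and for independence of the representative ($h(v',W)=c\,h(v,W)$ on $\cI$) are the paper's.

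The genuine difference is the codimension-two claim:
\begin{itemize}
\item The paper uses the valuative criterion. $\PP(T\PP^n)\to\PP^n$ is proper and $\PP^n$ is normal, so the local ring at each codimension-one point is a DVR and the rational section $\kappa$ extends there. This takes one line and needs no choice of representative.
\item You instead build an explicit polynomial representative via the Euler sequence and divide the minors $Z_iv_j-Z_jv_i$ by their gcd $g$. This is more hands-on. It also buys you something the paper leaves tacit: a representative whose radial locus is exactly the indeterminacy locus. That is what lets you control $\Delta_\kappa^\circ$ over the part of $U$ where $\kappa$ is not regular.
\end{itemize}

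Two small points should be filled in.

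First, after dividing by $g$ you must check that the reduced minors still define $\kappa$. Put $\omega=(Z\wedge v)/g$. Then $\omega\wedge\omega=0$ and $Z\wedge\omega=0$ hold as polynomial identities, because they hold after multiplying by $g^2$ and $g$. Hence wherever $\omega\neq0$, $\omega=Z\wedge u$, and it determines the line $[u]$ regularly. Alternatively, lift $\omega$ to a polynomial $v'$ with $Z\wedge v'=\omega$, using exactness of the Koszul complex of $Z_0,\dots,Z_n$, equivalently $H^1(\PP^n,\cO(k))=0$.

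Second, your dimension count over $U\setminus U'$ only excludes an extra \emph{divisorial} component. A lower-dimensional piece lying outside the closure of the main part would still destroy irreducibility of the closure of $\Delta^\circ_\kappa$. So the sentence ``the two definitions agree'' is not justified by that count. What makes the argument complete is the convention you adopt, which is also the paper's (``on the domain of the rational line field $\kappa$''): take $\Delta^\circ_\kappa$ only over the regular locus of $\kappa$.
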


\begin{proof}
The projective bundle $\PP(T\PP^n)\to\PP^n$ is proper.  A rational
section over the normal variety $\PP^n$ extends across every
codimension-one point by the valuative criterion for properness
(cf.~\cite[II.4]{Ha77}), so the indeterminacy locus has codimension
at least two.  Over the domain $U$ of $\kappa$, the annihilator
condition defines a relative hyperplane in the incidence fibers.
Thus it is a $\mathbb P^{n-2}$-bundle over the irreducible variety
$U$, and is therefore irreducible.  Its closure $\Delta_\kappa$ in
$\mathcal I$ is therefore irreducible and has codimension one, hence
is an irreducible divisor.

If $v'$ is another rational cone representative, then $v'=fv+gZ$ for
some rational functions $f,g$, with $f\not\equiv0$. On $\cI$,
\[
h(v',W)=f(Z)h(v,W).
\]
Hence the two relative hyperplanes agree on the dense open set where
$f\ne0$, and therefore have the same closure.
\end{proof}

Put $s:=\ord_{\Delta_\kappa}(Q|_{\cI})\ge0.$ Near the generic point
of $\Delta_\kappa$, write
\begin{equation}\label{eq:localfactor}
Q|_{\cI}=\ell_v^sR, \qquad \ord_{\Delta_\kappa}R=0.
\end{equation}

We shall use the following linear-algebra lemma to analyze the
residual factor along $\Delta_\kappa$.
For $S\in\operatorname{End}(\mathbb C^{n+1})$, let $S^{\dagger}$
denote its transpose with respect to $h$, defined by
\[
h(Su,W)=h(u,S^{\dagger}W).
\]

\begin{lemma}\label{lem:eigenvector}
Let $Z,v$ be linearly independent and set
\[
K_{Z,v}:=\{W\in\CC^{n+1}:h(Z,W)=h(v,W)=0\}.
\]
Suppose that $S^{\dagger}W\in\CC W$ for a Zariski dense set of
nonzero $W\in K_{Z,v}$.  Then there exist $\lambda\in\mathbb C$ and
covectors $a,b\in(\mathbb C^{n+1})^*$ such that
\begin{equation}\label{eq:ranktwoform}
S=\lambda I+Z\otimes a+v\otimes b.
\end{equation}
\end{lemma}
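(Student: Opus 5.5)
The plan is to pass to the $h$-transpose $T:=S^{\dagger}$, show that $T$ acts as a scalar on the whole subspace $K_{Z,v}$, and then dualize back using the nondegeneracy of $h$.

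\emph{Step 1: extend the hypothesis to all of $K_{Z,v}$.} Since $Z$ and $v$ are linearly independent and $h$ is nondegenerate, $K_{Z,v}$ is a linear subspace of dimension $n-1\ge1$. The condition $TW\in\CC W$ is equivalent to the vanishing of all $2\times2$ minors of the pair $(W,TW)$, i.e. $TW\wedge W=0$. This is a polynomial condition in $W$. It holds on a Zariski dense subset of $K_{Z,v}$, so it holds on all of $K_{Z,v}$. Hence every $W\in K_{Z,v}$ satisfies $TW=\lambda_W W$ for some scalar $\lambda_W$, where $\lambda_W$ is uniquely determined when $W\neq0$.

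\emph{Step 2: the scalar is constant.} If $\dim K_{Z,v}=1$, take a spanning vector $W_1$ and set $\lambda:=\lambda_{W_1}$. If $\dim K_{Z,v}\ge2$, take linearly independent $W_1,W_2\in K_{Z,v}$. Applying Step 1 to $W_1+W_2$ gives
\[
\lambda_{W_1}W_1+\lambda_{W_2}W_2=T(W_1+W_2)=c\,(W_1+W_2).
\]
By independence, $\lambda_{W_1}=c=\lambda_{W_2}$. Any two nonzero vectors of $K_{Z,v}$ are either proportional (hence share the same eigenvalue) or independent, so a single $\lambda\in\CC$ satisfies $TW=\lambda W$ for all $W\in K_{Z,v}$.

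\emph{Step 3: dualize.} Put $S_0:=S-\lambda I$. Then $S_0^{\dagger}=T-\lambda I$ vanishes on $K_{Z,v}$. For every $u\in\CC^{n+1}$ and every $W\in K_{Z,v}$,
\[
h(S_0u,W)=h(u,S_0^{\dagger}W)=0 .
\]
So $\im S_0$ lies in the $h$-annihilator of $K_{Z,v}$. Because $h$ is nondegenerate, $K_{Z,v}$ is exactly the annihilator of $\Span\{Z,v\}$. Taking annihilators twice and counting dimensions ($2=(n+1)-(n-1)$) shows that this annihilator is $\Span\{Z,v\}$. Therefore $S_0u=a(u)Z+b(u)v$ for uniquely determined coefficients, since $Z$ and $v$ are independent. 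These coefficients depend linearly on $u$, so they define covectors $a,b\in(\CC^{n+1})^*$. This yields \eqref{eq:ranktwoform}.

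There is no serious obstacle. The only points needing care are the Zariski-density step, which relies on the condition being polynomial (minors of $(W,TW)$), and the low-dimensional case $n=2$, where $K_{Z,v}$ is a line and Step 2 is vacuous.
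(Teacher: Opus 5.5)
Your proposal is correct and follows the paper's proof essentially step by step: extend the Zariski-closed condition $W\wedge S^{\dagger}W=0$ to all of $K_{Z,v}$, show the eigenvalue is constant via $W_1+W_2$, and conclude from $\im(S-\lambda I)\subset K_{Z,v}^{\perp_h}=\Span\{Z,v\}$. Your dimension count for the double annihilator simply spells out a step the paper asserts without comment.
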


\begin{proof}
The condition $W\wedge S^{\dagger}W=0$ is Zariski closed, hence
\begin{equation}\begin{split}\label{8}
S^\dagger W=\lambda(W)W \ \ \text{for every nonzero vector of $K_{Z,v}$}.
\end{split}\end{equation}
First assume  $\dim K_{Z,v}\ge2$. Applying (\ref{8}) to  $W_1$, $W_2$ and
$W_1+W_2$ for any linearly independent $W_1,W_2\in K_{Z,v}$ yields
$$
S^\dagger
(W_1+W_2)=\lambda(W_1+W_2)(W_1+W_2)=\lambda(W_1)W_1+\lambda(W_2)W_2.
$$
The independence of $W_1$ and $W_2$ gives
$\lambda(W_1+W_2)=\lambda(W_1)=\lambda(W_2)$. Hence  the eigenvalue
is independent of $W$. This is trivially true when $\dim K_{Z,v}=1$. Hence
$S^{\dagger}|_{K_{Z,v}}=\lambda I$ for some $\lambda\in\mathbb C$.

Therefore, for every $u\in\mathbb C^{n+1}$ and $W\in K_{Z,v}$,
\[
h((S-\lambda I)u,W)=h(u,(S^{\dagger}-\lambda I)W)=0.
\]
Hence
\[
\im(S-\lambda I)\subset K_{Z,v}^{\perp_h}=\Span\{Z,v\}.
\]
Since $Z$ and $v$ are linearly independent, there exist covectors
$a,b$ such that $S-\lambda I=Z\otimes a+v\otimes b$, which proves
\eqref{eq:ranktwoform}.
\end{proof}

Applying this lemma to the endomorphism arising from the exact
operator identity yields a logarithmic differential equation for the
residual factor.

\begin{proposition}\label{prop:log}
On a nonempty Zariski open subset of $\mathbb P^n$, there is a
rational covector $b_Z$ such that, whenever $\xi$ is tangent to the
$p_2$-fiber through $([Z],[W])$, equivalently
$h(\xi,W)=0$,
one has
\begin{equation}\label{eq:log}
D_\xi R=b_Z(\xi)R.
\end{equation}
\end{proposition}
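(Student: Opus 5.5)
Restrict the exact identity $\Dop_\xi Q=0$ of Lemma~\ref{lem:exactoperator} to the incidence variety $\cI$, and use the factorization \eqref{eq:localfactor}.

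\textbf{Step 1: restrict to $\cI$ and divide by $\ell_v^s$.} Fix a generic $[Z]\in U$. On $h=0$ the terms of $\Dop_\xi$ carrying a factor $h$ drop out. Since $Q$ is a polynomial in $W$, the derivatives in \eqref{eq:operator} are derivatives in $W$ (the $\kcone$-coefficients are frozen). Thus, on $p_1^{-1}([Z])$,
\[
\mu_\xi Q-\ell_v D_\xi Q-\ell_\xi D_vQ=0 .
\]
To differentiate the factorization $Q|_{\cI}=\ell_v^sR$ legitimately, I extend $R$ off $\cI$ as $Q/\ell_v^s$ near the generic point of $\Delta_\kappa$. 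The resulting error is a multiple of $h$, and its derivatives in directions $\xi$ with $h(Z,\xi)$-tangency are harmless after restriction. Using $D_\xi\ell_v=h(v,\xi)$, $D_v\ell_v=h(v,v)$ and $D_\xi\ell_v^s=s\ell_v^{s-1}h(v,\xi)$, substitution and division by $\ell_v^s$ give an identity in $R$:
\[
\mu_\xi R-\ell_v D_\xi R-\ell_\xi D_vR-s\,h(v,\xi)R-s\,\ell_\xi\,\frac{h(v,v)}{\ell_v}R=0 .
\]
Hence $\ell_v\mid s\,\ell_\xi h(v,v)R$ along $\Delta_\kappa$. Since $\ord_{\Delta_\kappa}R=0$, this forces either $s=0$ or $h(v,v)=0$. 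This dichotomy is presumably what later produces the branch alternatives.

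\textbf{Step 2: restrict to $\xi$ with $\ell_\xi=0$.} This is the setting of the proposition. The equation becomes
\[
\ell_v D_\xi R=(\mu_\xi-s\,h(v,\xi))R .
\]

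\textbf{Step 3: show $\ell_v$ divides the coefficient.} Along $\Delta_\kappa$ we have $R\neq0$ generically. So on $\{\ell_v=\ell_\xi=0\}$ the coefficient must vanish:
\[
\mu_\xi=s\,h(v,\xi) \quad\text{on}\quad \{W\in K_{Z,v}: h(\xi,W)=0\}.
\]
Both sides are linear in $W$. Write $\mu_\xi=h(A\xi,W)=h(\xi,A^\dagger W)$, so the condition reads $h(\xi,A^\dagger W)=0$ whenever $h(\xi,W)=0$ and $W\in K_{Z,v}$. Read as a statement over all $\xi\in W^{\perp_h}$, this says $A^\dagger W\in \CC W$ for generic $W\in K_{Z,v}$. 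Here the linear functional $h(\cdot,A^\dagger W)$ vanishes on the hyperplane $W^{\perp_h}$, and the extra term $s\,h(v,\xi)$ must also be absorbed; I would handle this by working in $W^{\perp_h}\cap v^{\perp_h}$ first and correcting afterwards.

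Lemma~\ref{lem:eigenvector} then gives
\[
A=\lambda I+Z\otimes a+v\otimes b .
\]
Consequently $\mu_\xi=\lambda\ell_\xi+a(\xi)h(Z,W)+b(\xi)\ell_v$, which on $\cI$ equals $\lambda\ell_\xi+b(\xi)\ell_v$.

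\textbf{Step 4: conclude.} For $\ell_\xi=0$, Step 2 now reads
\[
\ell_v D_\xi R=\bigl(b(\xi)\ell_v-s\,h(v,\xi)\bigr)R .
\]
The leftover term $s\,h(v,\xi)R$ needs care. I expect it either to vanish or to be absorbed into the covector, by showing that $h(v,\xi)=0$ is forced, or that $s\,h(v,\cdot)/\ell_v$ is not a genuine covector and so its coefficient vanishes by degree or pole considerations in $W$. Dividing by $\ell_v$ then gives \eqref{eq:log} with $b_Z:=b$ (suitably adjusted), which is rational in $Z$ because $A$ is.

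\textbf{Main obstacle.} The hardest step is Step 3 together with this bookkeeping of the $s\,h(v,\xi)$ terms. The task is to turn the vanishing along $\Delta_\kappa$ into the eigenvector hypothesis of Lemma~\ref{lem:eigenvector} on a Zariski dense subset of $K_{Z,v}$, while keeping the local extension of $R$ off $\cI$ consistent.
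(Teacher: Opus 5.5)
Your outline has the right shape: restrict $\Dop_\xi Q=0$ to $\cI$ with $\ell_\xi=0$, insert the factorization, evaluate on $\Delta_\kappa$, apply Lemma~\ref{lem:eigenvector}, substitute back and cancel $\ell_v$. But there is a genuine gap, caused by a misreading of the operator. You write that the derivatives in \eqref{eq:operator} are derivatives in $W$. They are derivatives in the base variable $Z$, with $W$ held fixed. Lemma~\ref{lem:exactoperator} is obtained by differentiating the sphere identity in $Z$, which is why $D_\xi h=\ell_\xi$, $D_vh=\ell_v$ and $D_{A\xi}h=\mu_\xi$. It is also why the proposition concerns $\xi$ with $h(\xi,W)=0$: these are the directions tangent to the $p_2$-fiber, along which $W$ is fixed and $[Z]$ moves.

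Your reading causes several concrete failures:
\begin{itemize}
\item The terms of $\Dop_\xi G$ no longer have a common $W$-degree: $\mu_\xi G$ has degree $\deg_W G+1$, while $\ell_vD_\xi G$ has degree $\deg_W G$.
\item You use $D_\xi\ell_v=h(v,\xi)$ in place of the correct $D_\xi\ell_v=h(D_\xi v,W)$.
\item Your treatment of the ambient error is inconsistent. A $W$-derivative gives $D_\xi(hS)=h(Z,\xi)S+hD_\xi S$, which does not vanish on $\cI$ under your condition $h(\xi,W)=0$.
\end{itemize}
The Step~1 consequence ``$s=0$ or $h(v,v)=0$'' is an artifact of this, and it is false. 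For $V_m\oplus0$ with $m\ge2$ one has $Q|_{\cI}=mZ_0^{m-1}W_0^{m-1}$ and kernel representative $v=e_0$. Then $s=m-1>0$, yet $h(e_0,e_0)=-1$.

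The same misreading is why you cannot dispose of the leftover term in Steps~3--4. The term $s\,h(v,\xi)R$ has $W$-degree zero, so it cannot be absorbed into a linear form in $W$. Restricting to $\xi\in W^{\perp_h}\cap v^{\perp_h}$ only yields $A^\dagger W\in\Span\{W,v\}$, not the eigenvector hypothesis.

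With $Z$-derivatives the computation closes at once:
\begin{itemize}
\item Write $Q=\ell_v^sR+hS$ ambiently, rather than setting $R:=Q/\ell_v^s$, which is singular off $\cI$.
\item For $h(\xi,W)=0$ one has $D_\xi(hS)=h(\xi,W)S+hD_\xi S=0$ on $\cI$.
\item Also $D_\xi(\ell_v^s)=s\ell_v^{s-1}h((Dv)\xi,W)$, so the $s$-term is linear in $W$ and merges with $\mu_\xi$. This gives $h(\widetilde A\xi,W)R=\ell_vD_\xi R$ with $\widetilde A:=A-s\,Dv$.
\item On $\Delta_\kappa$, where $R\ne0$ generically, this gives $\widetilde A^\dagger W\in\CC W$ for generic $W\in K_{Z,v}$. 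So Lemma~\ref{lem:eigenvector} is applied to $\widetilde A$, not to $A$.
\item Substituting $\widetilde A=\lambda I+Z\otimes a_Z+v\otimes b_Z$ gives $h(\widetilde A\xi,W)=b_Z(\xi)\ell_v$ on $\cI\cap\{h(\xi,W)=0\}$. Cancelling $\ell_v$ yields \eqref{eq:log}.
\end{itemize}
No separate treatment of the $s$-term is needed.
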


\begin{proof}
The factorization \eqref{eq:localfactor} is an identity on $\cI$,
not a priori in the ambient product.  Choose local ambient
representatives.  After shrinking the open set, there is a
holomorphic function $S$ such that
\begin{equation}\label{eq:ambientresidual}
Q=\ell_v^sR+hS.
\end{equation}
Let $\xi$ be tangent to the $p_2$-fiber, so $h(\xi,W)=0$.  On $\cI$, $h(Z,W)=0$,
 \eqref{eq:DQ0} becomes 
\begin{equation}\begin{split}\label{q1}
\mu_\xi Q-\ell_vD_\xi Q=0.
\end{split}\end{equation}
 On the other hand, $D_\xi\ell_v=h(D_\xi v,W)$ and
\begin{equation}\begin{split}\label{q2}
&D_\xi(\ell_v^sR)=s\ell_v^{s-1}h(D_\xi v,W)R+\ell_v^{s}D_\xi R,\\
&D_\xi(hS)=h(\xi,W)S+hD_\xi S=0.
\end{split}\end{equation}
Substituting \eqref{eq:ambientresidual} and (\ref{q2}) into (\ref{q1}) gives
$$
\mu_\xi \ell_v^sR-\ell_v\big(s\ell_v^{s-1}h(D_\xi v,W)R+\ell_v^{s}D_\xi R\big)=0.
$$
Note that  $\mu_\xi=h(A\xi,W)$, and write $D_\xi
v=(Dv)\xi$. Then, on the dense open set where $\ell_v\ne0$,
\begin{equation}\label{eq:residualpre}
h((A-sDv)\xi,W)R=\ell_vD_\xi R.
\end{equation}
By rational continuation, \eqref{eq:residualpre} holds identically
on the chosen open set.

Write $\widetilde A:=A-sDv.$ By the definition of $s$,
$\ord_{\Delta_\kappa}R=0$. Hence $R$ is nonzero on a Zariski dense
open subset of $\Delta_\kappa$. For general $Z$ and general $W\in
K_{Z,v}$,  equation \eqref{eq:residualpre} therefore gives
\[
h(\widetilde A\xi,W)=0 \qquad\text{for every }\xi\text{ satisfying
}h(\xi,W)=0.
\]
Equivalently
\[
\widetilde A^{\dagger}W\in\CC W
\]
for a Zariski dense set of $W\in K_{Z,v}$.
Lemma~\ref{lem:eigenvector} gives
\[
\widetilde A=\lambda I+Z\otimes a_Z+v\otimes b_Z.
\]
Substituting this identity into \eqref{eq:residualpre} and using $h(Z,W)=h(\xi,W)=0$ yields
\[
b_Z(\xi)\ell_vR=\ell_vD_\xi R.
\]
Canceling $\ell_v$ on the dense open set where $\ell_v\ne0$ gives
\eqref{eq:log}. Since all the preceding data are defined over
$\kcone$, the covector $b_Z$ may be chosen rationally in $Z$.
\end{proof}

\begin{corollary}\label{cor:tangency}
Let $D$ be a prime component of the residual zero divisor of $R$
such that  $p_1(D)$ is dense in $\PP^n$.  Let $U\subset\PP^n$
be a nonempty Zariski open set on which the rational data $v,A,b_Z$
and the local trivializations used in Proposition~\ref{prop:log} are
regular.  Then, at a general smooth point
\[
x:=([Z],[W])\in D\cap p_1^{-1}(U)
\]
away from $\Delta_\kappa$ and the other residual components,
\begin{equation}\label{equ:TxD}
T_x\bigl(p_2^{-1}(p_2(x))\bigr)\subset T_xD.
\end{equation}
\end{corollary}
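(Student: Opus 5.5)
The plan is to read the tangency off the logarithmic equation \eqref{eq:log} of Proposition~\ref{prop:log}, using that $R$ vanishes to finite order along $D$. Work near a general smooth point $x=([Z],[W])$ of $D\cap p_1^{-1}(U)$, away from $\Delta_\kappa$ and the other residual components. There $\ell_v\neq0$ and the local factorization $Q|_{\cI}=\ell_v^sR$ of \eqref{eq:localfactor} is valid. Since $D$ is a prime component of the zero divisor of $R$, near $x$ we can choose a local defining function $g$ of $D$ in $\cI$ with $dg_x\neq0$ and write
\[
R=g^k u,\qquad k=\ord_D R\ge1,\qquad u(x)\neq0.
\]
The standing assumption that $p_1(D)$ is dense, together with the choice of $U$, ensures that the rational data $v,A,b_Z$ are regular at $[Z]$. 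So \eqref{eq:log} holds at points of $\cI$ near $x$, as an identity of holomorphic functions after fixing local ambient representatives as in \eqref{eq:ambientresidual}.

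The key computation is as follows. Let $\xi$ be a vector with $h(\xi,W)=0$; by Proposition~\ref{prop:log} these are exactly the directions tangent to the $p_2$-fiber through $([Z],[W])$. Differentiating $Z$ along $\xi$ keeps $h(Z,W)=0$ to first order, so $D_\xi$ is a vector field tangent to $\cI$, and in fact to the fiber $p_2^{-1}(p_2(x))$. Substituting $R=g^ku$ into \eqref{eq:log} gives
\[
k g^{k-1}(D_\xi g)\,u+g^kD_\xi u=b_Z(\xi)\,g^ku .
\]
Dividing by $g^{k-1}$ yields $k(D_\xi g)u=g\,(b_Z(\xi)u-D_\xi u)$. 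This shows $D_\xi g$ vanishes on $D$ near $x$, since $u$ is a unit and $k\ge1$. Evaluating at $x$ gives $dg_x(\xi)=0$ for every such $\xi$. Hence the tangent space of the $p_2$-fiber at $x$, which is spanned by these $\xi$ modulo the radial direction $Z$ (itself killed because $g$ is homogeneous), lies in $\ker dg_x=T_xD$. This is \eqref{equ:TxD}.

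The main obstacle is the bookkeeping between ambient and incidence quantities. $R$ is only defined on $\cI$ via \eqref{eq:ambientresidual}, so one must check two things: that $D_\xi$ with $h(\xi,W)=0$ and $W$ frozen really differentiates along $\cI$, which holds because $D_\xi h=h(\xi,W)=0$; and that the $hS$ ambiguity drops out, which holds because $D_\xi(hS)=0$ on $\cI$, exactly as in \eqref{q2}. A further point to check is that the tangent space of the $p_2$-fiber in $\PP^n\times\PP^n$ is the projectivized image of $\{\xi:h(\xi,W)=0\}$. I would handle this by working on the cone and using homogeneity of $g$ via Euler's identity. With these checks, together with the generic-point choices excluding the indeterminacy of $b_Z$ and the locus $\ell_v=0$, the argument is complete.
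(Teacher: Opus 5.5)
Your proposal is correct and is essentially the paper's proof. At a general smooth point of $D$ you write $R=g^ku$ with $g$ a reduced local equation and $u$ a unit, substitute into \eqref{eq:log}, divide by $g^{k-1}$ to get $k\,u\,D_\xi g\equiv0\pmod g$, and conclude $dg_x(\xi)=0$ for every $p_2$-fiber direction $\xi$. Your extra bookkeeping is consistent with the paper's argument and somewhat more explicit: you check that the $hS$ ambiguity drops out, handle the radial direction, and treat $D_\xi$ as tangent to $\cI$. That last point is literally true once $\xi=\xi(W)$ is taken to depend holomorphically on $W$ with $h(\xi(W),W)=0$.
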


\begin{proof}
Since $p_1(D)$ is dense in $\PP^n$, $D\cap p_1^{-1}(U)$ is dense
in $D$. After removing $\Delta_\kappa$, the other residual
components, and the singular locus of $D$, the indicated general
point may therefore be chosen.  Near such a point the residual
factor $R$ and the covector $b_Z$ are regular.  Write
\[
R=g^k u, \qquad k\ge1,
\]
where $g=0$ is a reduced local equation of $D$ and $u$ is a local
unit.  For a $p_2$-fiber tangent vector $\xi$, \eqref{eq:log} gives
\[
kg^{k-1}uD_\xi g+g^kD_\xi u=b_Z(\xi)g^k u.
\]
Every coefficient in this identity is regular at the generic point
of $D$.  Dividing by $g^{k-1}$ and reducing modulo $g$ therefore
gives
\[
k u D_\xi g\equiv0\pmod g.
\]
Since the ground field has characteristic zero and $u$ is a unit, it
follows that $D_\xi g\equiv0\pmod g$. Thus $D_\xi g=0$ along $D$. In
particular, at the chosen smooth point $x$, $dg_x(\xi)=0$, so
$\xi\in T_xD$. Since $\xi$ was an arbitrary tangent vector to the
$p_2$-fiber, this proves \eqref{equ:TxD}.
\end{proof}


We now combine the preceding tangency result with a fiber-dimension
argument to show that every residual prime component is pulled back
through one of the two incidence projections.

\begin{proposition}[Global separation]\label{prop:global}
There are effective divisors $D_1,D_2$ on $\PP^n$ such that
\begin{equation}\label{eq:globaldiv}
\Div(Q|_{\cI})=s\Delta_\kappa+p_1^*D_1+p_2^*D_2.
\end{equation}
\end{proposition}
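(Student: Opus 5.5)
We write $\Div(Q|_{\cI})=s\Delta_\kappa+\Div(R)$, where $\Div(R)$ is the residual effective divisor. It is effective because $Q$ is a polynomial, $\cI$ is smooth, and $h\nmid Q$ by Lemma~\ref{lem:positivity}, so $Q|_{\cI}\not\equiv0$. The goal is then to show that every prime component $D$ of $\Div(R)$ is of the form $p_1^{-1}(D')$ or $p_2^{-1}(D'')$ for a prime divisor $D'$ or $D''$ of $\PP^n$. Grouping components then produces $D_1$ and $D_2$. We split into cases according to whether $p_1(D)$ is dense in $\PP^n$.

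\emph{Case 1: $p_1(D)$ is not dense.} Since $D$ is irreducible of dimension $2n-2$ and every $p_1$-fiber is a $\PP^{n-1}$, we get $\dim p_1(D)\ge n-1$. So $p_1(D)$ is a prime divisor $D'$ and $D\subset p_1^{-1}(D')$. The preimage $p_1^{-1}(D')$ is irreducible, being a $\PP^{n-1}$-bundle over an irreducible base, and it has the same dimension as $D$. Hence $D=p_1^{-1}(D')$.

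\emph{Case 2: $p_1(D)$ is dense.} Here Corollary~\ref{cor:tangency} applies. At a general smooth point $x\in D$, the tangent space to the $p_2$-fiber $p_2^{-1}(p_2(x))$ lies in $T_xD$. Thus $D$ is a divisor on which the $(n-1)$-dimensional foliation by $p_2$-fibers is tangent on a dense open set. It follows that $D\cap p_2^{-1}(p_2(x))$ contains an open neighborhood of $x$ in the irreducible fiber $p_2^{-1}(p_2(x))\cong\PP^{n-1}$. Indeed, the fiber meets $D$ at $x$, and the restriction of a local equation $g$ of $D$ to the fiber has vanishing differential along $D$; by Corollary~\ref{cor:tangency} this holds at all nearby general points, so $g$ restricted to the fiber vanishes on a neighborhood of $x$ by integrating along the fiber. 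Since the fiber is irreducible and $D$ is closed, the whole fiber lies in $D$. Therefore $D\supset p_2^{-1}(p_2(D^{\circ}))$ for a dense open $D^\circ\subset D$. By the same dimension count as in Case 1, $p_2(D)$ is a prime divisor $D''$ and $D=p_2^{-1}(D'')$. Note that $p_2(D)$ cannot be dense: in that case $D$ would contain $p_2^{-1}$ of a dense set, forcing $D=\cI$.

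\emph{Main obstacle.} The delicate point is the integration step in Case 2, where pointwise tangency is upgraded to containment of whole fibers. The cleanest route is to use the local equation $g$: Corollary~\ref{cor:tangency} gives $D_\xi g\equiv0 \pmod g$ for all $\xi$ tangent to $p_2$-fibers. Hence along each fiber, locally near $D$, the zero set of $g|_{\text{fiber}}$ is invariant under all fiber vector fields. A nonempty zero set of a function on a connected manifold that is invariant under a transitive family of vector fields is open, hence is a full local piece of the fiber. We must also make sure that removing $\Delta_\kappa$, the other components, and the non-regular locus of $U$ does not disconnect the argument. These removed sets are proper closed subsets, so generic fibers meet them in proper closed subsets of $\PP^{n-1}$, and irreducibility of the fiber then closes the argument.

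Finally, set $D_1=\sum a_{D'}D'$ over Case 1 components and $D_2=\sum a_{D''}D''$ over Case 2 components, with the multiplicities inherited from $\Div(R)$. Since $p_i^*$ of a reduced prime divisor is reduced, $\operatorname{mult}_D$ equals the coefficient of the corresponding prime divisor in $D_1$ or $D_2$. This gives \eqref{eq:globaldiv}.
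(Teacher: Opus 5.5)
Your proof is correct and essentially the paper's: the same split according to whether $p_1(D)$ is dense, the identical dimension count giving $D=p_1^{-1}(B_1)$ in the first case, and Corollary~\ref{cor:tangency} driving the second case. The only difference is in Case~2. The paper turns the tangency directly into $\rk d(p_2|_D)_x\le n-1$ (since $dp_2$ kills the $(n-1)$-dimensional fiber directions), hence $\dim p_2(D)\le n-1$, and pairs this with the fiber-dimension lower bound. You instead integrate the tangency to show that $D$ is saturated by $p_2$-fibers, which is legitimate because $D$ is smooth at $x$ and $D_\xi g=c_\xi g$ holds on a neighborhood. The paper's rank count is shorter and skips the integration step you flagged as the main obstacle.
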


\begin{proof}
Let $D$ be a prime component of
\[
\Div(Q|_{\cI})-s\Delta_\kappa.
\]  
Since $\dim\cI=2n-1$, one has $\dim D=2n-2$. The proof is divided according to whether $p_1(D)$ is dense in $\PP^n$.

{\bf Case I:} Suppose  first that
$B_1:=p_1(D)$  is a proper irreducible subset
of $\PP^n$.  Every fiber of $p_1$ has dimension $n-1$, so
\[
\dim D\le \dim B_1+n-1.
\]
Thus $\dim B_1\ge n-1$.  Since $B_1$ is a proper irreducible subset
of $\PP^n$, necessarily $\dim B_1=n-1$, so $B_1$ is a prime divisor.
The irreducible variety $p_1^{-1}(B_1)$ has dimension $2n-2$ and
contains $D$, hence
\[
D=p_1^{-1}(B_1).
\]

{\bf Case II:}
Now suppose that $p_1(D)$ is dense in $\PP^n$.  Choose a nonempty
Zariski open set $U\subset\PP^n$ on which the rational data
$v,A,b_Z$ and the local trivializations of
Proposition~\ref{prop:log} are regular.  Since $p_1(D)$ is dense in $\PP^n$,
we may choose a general smooth point $x:=([Z],[W])\in D\cap
p_1^{-1}(U)$ away from $\Delta_\kappa$ and all other residual
components.  By Corollary~\ref{cor:tangency},
\[
T_x(p_2^{-1}(p_2(x)))\subset T_xD.
\]
The left side has dimension $n-1$, so
\[
\rk d(p_2|_D)_x\le (2n-2)-(n-1)=n-1.
\]
Since this holds at a general point of $D$, the generic rank of
$p_2|_D$ is at most $n-1$. Hence $\dim p_2(D)\le n-1$.  On the other
hand, every fiber of $p_2|_D$ is contained in the corresponding
fiber of $p_2:\cI\to\PP^n$, which has dimension $n-1$, hence the fiber-dimension theorem gives
\[
\dim p_2(D)\ge (2n-2)-(n-1)=n-1.
\]
Hence $\dim p_2(D)=n-1$, so $B_2:=p_2(D)$ is a prime divisor. The irreducible variety $p_2^{-1}(B_2)$ has dimension $2n-2$ and contains $D$, hence
\[
D=p_2^{-1}(B_2).
\]
Collecting all prime components with multiplicities proves
\eqref{eq:globaldiv}.
\end{proof}

\subsection{Geometric branch dichotomy}

We now distinguish the possible characteristic behavior according to
the generic rank $r=\rk\cK$ and the multiplicity
$s=\ord_{\Delta_\kappa}(Q|_{\cI})$.
We call $Q$ \emph{non-characteristic} with respect to $\kappa$ if
$s=0$, and \emph{characteristic} with respect to $\kappa$ if $s>0$.

\begin{proposition}\label{prop:rge2}
If the generic rank $r=\rk\cK$ is at least two, then there exists a
rational kernel line field with characteristic multiplicity $s=0$.
\end{proposition}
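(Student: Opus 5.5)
The idea is to show that when $r\ge 2$, different choices of kernel line field give different characteristic divisors, while $Q|_{\cI}$ has only finitely many prime components. Hence some choice of $\kappa$ has a characteristic divisor that is not a component of $\Div(Q|_{\cI})$, which means $s=0$.

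\textbf{Step 1: a family of kernel line fields.} Since $\cK$ is a rational subbundle of rank $r\ge2$, choose two rational cone representatives $v_1,v_2$ that are linearly independent modulo $Z$ at the generic point of $U$. For $t\in\CC$ set $v_t:=v_1+tv_2$. Each $v_t$ is a rational cone representative of a line field $\kappa_t\subset\cK$, and every construction of Subsection~\ref{subsec:canonical-operator} applies to it. By Lemma~\ref{lem:intrinsic}, each $\Delta_{\kappa_t}$ is an irreducible divisor on $\cI$: it is the closure of $\{h(v_1(Z),W)+t\,h(v_2(Z),W)=0\}$ in the fibers $p_1^{-1}([Z])$.

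\textbf{Step 2: the divisors $\Delta_{\kappa_t}$ are pairwise distinct.} Suppose $\Delta_{\kappa_t}=\Delta_{\kappa_{t'}}$ with $t\ne t'$. Then for general $[Z]$, the hyperplanes $h(v_t,W)=0$ and $h(v_{t'},W)=0$ inside the fiber $\{h(Z,W)=0\}\cong\PP^{n-1}$ coincide. It follows that $v_t$ and $v_{t'}$ are proportional modulo $Z$, which forces $v_1,v_2$ to be dependent modulo $Z$, a contradiction. The one point to check is that these hyperplanes are genuine hyperplanes. This holds because $v_t\notin\CC Z$, so $h(v_t,\cdot)$ is not proportional to $h(Z,\cdot)$ on $\CC^{n+1}$; here we use nondegeneracy of $h$.

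\textbf{Step 3: conclusion.} The divisor $\Div(Q|_{\cI})$ is effective and nonzero, and it has only finitely many prime components. By Lemma~\ref{lem:positivity}, $Q|_{\cI}\not\equiv0$, so the divisor is well defined. Step 2 gives infinitely many distinct prime divisors $\Delta_{\kappa_t}$, so for all but finitely many $t$ the divisor $\Delta_{\kappa_t}$ is not a component of $\Div(Q|_{\cI})$. For such $t$ we get $s=\ord_{\Delta_{\kappa_t}}(Q|_{\cI})=0$, and $\kappa_t$ is the required rational kernel line field.

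The only delicate point is Step 2. Rationality of $\kappa_t$ is immediate since $v_1,v_2$ are rational and $t$ is a constant. One must, however, make sure that the Zariski closures of the two relative hyperplane bundles, which agree over a dense open set of $\PP^n$ when they coincide, really distinguish $t$ from $t'$. I would handle this by comparing the divisors over the generic point of $\PP^n$, that is, over the function field $\kcone$. There the statement reduces to the linear-algebra fact that $h(v_t,W)$ and $h(v_{t'},W)$ are non-proportional modulo $h(Z,W)$ as linear forms in $W$ with coefficients in $\kcone$.
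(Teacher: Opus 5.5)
Your proof is correct and is essentially the paper's argument. Both rest on the injectivity of $[v]\mapsto[h(v,W)|_{H_Z}]$ on $\PP(\cK_Z)$ (from $H_Z^{\perp_h}=\CC Z$) together with a finiteness count: the paper counts linear factors of $Q|_{H_Z}$ over the generic point, while you count prime components of $\Div(Q|_{\cI})$ along a constant-coefficient pencil $v_1+tv_2$. The one detail to add is that $v_1,v_2$ must be homogeneous of the same degree (e.g.\ multiply one by a suitable power of $Z_0$); otherwise $v_1+tv_2$ does not define a line field on $\PP^n$, and the condition $h(v_t(Z),W)=0$ is not well defined on $\cI$.
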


\begin{proof}
Work over the generic point of $\PP^n$.  On the generic incidence
hyperplane
\[
H_Z:=\{[W]\in\PP^{n}:h(Z,W)=0\},
\]
Lemma~\ref{lem:positivity} gives $h\nmid Q$, and hence
$Q(Z,W)|_{H_Z}$ is nonzero. It therefore has only finitely many
projective linear factors.  The map
\[
\PP(\cK_Z)\longrightarrow\PP(H_Z^*), \qquad [v]\longmapsto
[h(v,W)|_{H_Z}],
\]
is injective. Indeed, if two such restrictions are proportional,
then for some $c\ne0$, $h(v-cv',W)=0$ for all $W\in H_Z$. Hence
\[
v-cv'\in H_Z^{\perp_h}=\CC Z,
\]
so $v$ and $v'$ determine the same projective tangent direction.
Since $\dim\PP(\cK_Z)=r-1\ge1$, we may choose $[v]$ outside the
finite set of kernel directions for which $h(v,W)|_{H_Z}$ is a
linear factor of $Q|_{H_Z}$. Then $\ell_v|_{H_Z}\nmid Q|_{H_Z}$.
Since $\Delta_\kappa$ is generically defined in the incidence fiber
by $\ell_v|_{H_Z}=0$, it follows that
$s=\ord_{\Delta_\kappa}(Q|_{\cI})=0.$

The chosen line over the function field defines a rational line
field $\kappa\subset\cK$ on a nonempty Zariski open subset of
$\PP^n$.
\end{proof}

In the rest of this subsection, assume $r=1$ and $s\ge1$.
\begin{lemma}We have
\[
[\Delta_\kappa]=aH_1+H_2
\]
for some integer $a$, where $H_i:=p_i^*\cO_{\PP^n}(1)$ are the two
hyperplane classes on $\cI$.
\end{lemma}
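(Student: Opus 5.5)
The plan is to compute the class of $\Delta_\kappa$ by exhibiting it as the zero locus of a global section of a line bundle of the form $p_1^*\cO(a)\otimes p_2^*\cO(1)$, corrected by vertical components. First, the Picard group of $\cI$ is $\mathbb Z H_1\oplus\mathbb Z H_2$. Indeed, $p_1:\cI\to\PP^n$ is a $\PP^{n-1}$-bundle, namely the projectivization of a rank-$n$ vector bundle, so its Picard group is generated by $p_1^*\mathrm{Pic}(\PP^n)$ together with the relative $\cO(1)$. Moreover, $H_2$ restricts to the hyperplane class on each fiber $p_1^{-1}([Z])\cong H_Z\subset\PP^n$. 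Hence $[\Delta_\kappa]=aH_1+bH_2$ for some integers $a,b$.

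To determine $b$, restrict to a general fiber $p_1^{-1}([Z])$ with $[Z]\in U$. By Lemma~\ref{lem:intrinsic} and its construction, $\Delta_\kappa$ meets this fiber, a copy of $\PP^{n-1}$ with hyperplane class $H_2|$, in the hyperplane $\{h(v(Z),W)=0\}\cap H_Z$. This is reduced of degree one, because $v(Z)\notin\CC Z$ and so $h(v,\cdot)|_{H_Z}\ne0$. Restriction to a general fiber kills $H_1$, so $b=\deg(\Delta_\kappa|_{\text{fiber}})=1$. I would also check that $\Delta_\kappa$ contains no whole fiber. This holds because $\Delta_\kappa$ is irreducible and dominates $\PP^n$, so its intersection with a general fiber is exactly the set-theoretic hyperplane, with multiplicity one since the defining linear form vanishes to first order.

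As a more concrete alternative, or a cross-check, I would clear denominators. Write the cone representative as $v(Z)$ with polynomial entries that have no common factor, homogeneous of degree $d$. Then $\ell_v=h(v(Z),W)$ is a bihomogeneous polynomial of bidegree $(d,1)$ and gives a section of $\cO(d,1)|_{\cI}$. Its divisor on $\cI$ equals $\Delta_\kappa$ plus components where $\ell_v$ vanishes identically on whole fibers. Such a fiber-wise vanishing forces $v(Z)\in\CC Z$, which happens along a locus that may contain divisors $E\subset\PP^n$ only if we have not subtracted a multiple of $Z$ suitably. These contribute $p_1^*E$, whose class is a multiple of $H_1$. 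In either approach the class is $aH_1+H_2$.

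The main obstacle is making sure there is no hidden multiplicity or extra vertical piece affecting the $H_2$ coefficient. This is where the Picard group argument is cleanest: the coefficient $b$ is read off purely from a general fiber, and only transversality there is needed. Any contribution concentrated over a divisor of $\PP^n$ affects only $a$. The integer $a$ may be negative or unspecified; the statement requires nothing about it.
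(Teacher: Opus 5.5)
Your proposal is correct and follows the paper's proof: the projective-bundle structure of $p_1:\cI\to\PP^n$ gives $\operatorname{Pic}(\cI)=\mathbb Z H_1\oplus\mathbb Z H_2$, and restricting to a general $p_1$-fiber, where $\Delta_\kappa$ cuts out a hyperplane, forces the $H_2$-coefficient to be $1$. Your extra checks (the intersection with a general fiber is reduced and no whole fiber is contained in $\Delta_\kappa$) and the clearing-denominators cross-check are sound additions that make the paper's one-line fiber argument more explicit.
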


\begin{proof}
 Since the incidence variety $\mathcal{I}$ is the projectivization over $\PP^n$ of the
 bundle
 $E=\{([Z],W)\in
  \PP^n\times\CC^{n+1}:\ h(Z,W)=0\}$, the projective-bundle formula (cf.~\cite[III.5]{Ha77}) gives
$$
\text{Pic}(\PP(E))=\text{Pic}(\PP^n)\oplus
\mathbb{Z}[\cO_{\PP(E)}(1)].
$$
Namely, $\text{Pic}(\mathcal{I})=\mathbb{Z}H_1+ \mathbb{Z}H_2$. Thus
$[\Delta_\kappa]=aH_1+bH_2$ for some $a,b\in \mathbb{Z}$. Since
$\Delta_\kappa$ cuts a general $p_1$-fiber into a hyperplane, we have $b=1$.
\end{proof}

\begin{lemma}\label{lem:a-positive}
One has $a\ge0$.
\end{lemma}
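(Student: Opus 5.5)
The plan is to compute $a$ as an intersection number of the effective divisor $\Delta_\kappa$ with a suitable moving curve in $\cI$. A moving curve cannot lie inside a fixed irreducible divisor, so this intersection number must be nonnegative.

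\textbf{The test curves.} Fix a general point $[W]\in\PP^n$. The $p_2$-fiber over $[W]$ is
\[
p_2^{-1}([W])\cong\{[Z]\in\PP^n:h(Z,W)=0\}\times\{[W]\},
\]
which is a hyperplane $\PP^{n-1}$ in the first factor, since $h$ is nondegenerate. Because $n\ge2$, this hyperplane contains projective lines. Choose such a line $L\subset p_2^{-1}([W])$.
\begin{itemize}
\item Since $p_2|_L$ is constant, $H_2\cdot L=\deg(p_2|_L)^*\cO(1)=0$.
\item Since $p_1|_L$ maps $L$ isomorphically onto a line of $\PP^n$, $H_1\cdot L=1$.
\end{itemize}
By the preceding lemma, $[\Delta_\kappa]=aH_1+H_2$, so
\[
\Delta_\kappa\cdot L=a.
\]

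\textbf{Moving the curve off $\Delta_\kappa$.} Consider the family of all lines contained in $p_2$-fibers, that is, all pairs $([W],L)$ with $L$ a line in the hyperplane $\{h(Z,W)=0\}$. This family is irreducible: it is a Grassmannian bundle over $\PP^n$. Its members sweep out all of $\cI$, because every point $([Z],[W])\in\cI$ lies on some line through $[Z]$ inside the hyperplane $\{h(\cdot,W)=0\}$.

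By Lemma~\ref{lem:intrinsic}, $\Delta_\kappa$ is an irreducible divisor, hence a proper closed subset of $\cI$. So the set of members $L$ with $L\subset\Delta_\kappa$ is a proper closed subset of the parameter space. Indeed, if every member were contained in $\Delta_\kappa$, the union of the members, which is $\cI$, would be contained in $\Delta_\kappa$, which is impossible.

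For a general member $L\not\subset\Delta_\kappa$, the restriction $\Delta_\kappa|_L$ is an effective divisor on $L\cong\PP^1$. Therefore
\[
a=\Delta_\kappa\cdot L=\deg(\Delta_\kappa|_L)\ge0.
\]

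\textbf{Alternative argument.} As a cross-check, one can argue directly with equations. Clear denominators in $v$ to obtain a polynomial representative, homogeneous of degree $d$ in $Z$. Then $h(v(Z),W)$ has bidegree $(d,1)$, so its divisor on $\cI$ has class $dH_1+H_2$. Over a general point of $\PP^n$, this divisor cuts the $p_1$-fiber in a single reduced hyperplane. Hence every component other than $\Delta_\kappa$ fails to dominate $\PP^n$, and by the argument of Case~I of Proposition~\ref{prop:global} it has the form $p_1^{-1}(B)$ for some prime divisor $B$. This gives $a=d-\sum\deg B\cdot(\text{mult})$. However, this route does not make the sign evident, so the intersection argument above is the one I would use.

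\textbf{Main obstacle.} The only delicate point is justifying that the general test line is not contained in $\Delta_\kappa$. This is handled by the dominance (covering) argument above, using that $\Delta_\kappa$ is irreducible of codimension one, together with the hypothesis $n\ge2$, which guarantees that lines exist inside the $p_2$-fibers.
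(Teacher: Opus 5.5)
Your argument is correct, but it takes a different route from the paper's.

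\textbf{The paper's route.} The paper argues cohomologically. A nonzero section of $\cO_{\cI}(a,1)$ would exist, yet the restriction sequence
\[
0\to\cO(a-1,0)\xrightarrow{\cdot h}\cO(a,1)\to\cO_{\cI}(a,1)\to0
\]
on $\PP^n\times\PP^n$ forces $H^0(\cO_{\cI}(a,1))=0$ when $a<0$. This uses K\"unneth, the vanishing of $H^0(\PP^n,\cO(k))$ for $k<0$, and the vanishing of $H^1(\PP^n,\cO(k))$ for $n\ge2$.

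\textbf{Your route.} You pair the class $aH_1+H_2$ with a covering family of curves, namely lines in $p_2$-fibers. On these lines $H_1$ has degree $1$ and $H_2$ has degree $0$. Then you use that an effective divisor has nonnegative degree on any curve it does not contain.

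\textbf{Comparison.} Your argument is more elementary: it needs only degrees of line bundles on $\PP^1$. The hypothesis $n\ge2$ enters transparently, as the existence of lines in the $(n-1)$-dimensional $p_2$-fibers; in the paper it plays the corresponding role of guaranteeing $H^1(\PP^n,\cO(a-1))=0$. Both arguments yield the same conclusion, since any nonzero section of $\cO_{\cI}(a,1)$ has an effective zero divisor of class $aH_1+H_2$. The paper's choice fits the surrounding machinery: the same restriction sequence is reused immediately in Proposition~\ref{prop:puremixed}, to identify sections of $\cO_{\cI}(0,1)$ with linear forms, and in the mixed-branch analysis, where sections of $\cO_{\cI}(1,1)$ are bilinear forms modulo $h$.

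\textbf{What you can simplify.} You do not need the irreducibility of $\Delta_\kappa$, nor the Grassmannian-bundle structure of the family of lines. It suffices to pick a point of $\cI\setminus\Delta_\kappa$ and a line through it inside its $p_2$-fiber. So your argument applies verbatim to any effective divisor, and by symmetry it also shows that the $H_2$-coefficient of any effective class is nonnegative. Your ``alternative argument'' plays no role and can be dropped.
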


\begin{proof}
An effective divisor of class $aH_1+H_2$ gives a nonzero section of
$\cO_{\cI}(a,1)$.  From
\[
0\to\cO(a-1,0)\xrightarrow{\cdot h}\cO(a,1)\to\cO_{\cI}(a,1)\to0
\]
and the standard cohomology of line bundles on projective space ,
$H^0(\cO(a,1))=0$ and $H^1(\cO(a-1,0))=0$ when $a<0$ and $n\ge2$.
Hence $H^0(\cO_{\cI}(a,1))=0$, a contradiction.
\end{proof}

\begin{proposition}[Pure/mixed dichotomy]\label{prop:puremixed}
Exactly one of the following occurs.
\begin{enumerate}[label=(\alph*)]
\item \emph{Pure branch:} $a=0$ and
\[
\Delta_\kappa=p_2^{-1}(\Pi_L)
\]
for a hyperplane $\Pi_L:=\{L(W)=0\}\subset\PP^n$.
\item \emph{Mixed branch:} $a=1$, $\Delta_\kappa$ is the unique mixed prime component of $\Div(Q|_{\cI})$, and it is invariant under the Hermitian involution $([Z],[W])\mapsto([\overline W],[\overline Z])$.
\end{enumerate}
\end{proposition}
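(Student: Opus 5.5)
The plan is to combine the Hermitian symmetry $Q^\#=Q$ with the global separation of Proposition~\ref{prop:global} and the class computation $[\Delta_\kappa]=aH_1+H_2$, $a\ge0$, from the preceding two lemmas. Let $\sigma:\PP^n\times\PP^n\to\PP^n\times\PP^n$ be the antiholomorphic involution $([Z],[W])\mapsto([\overline W],[\overline Z])$.

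\textbf{Step 1: $\sigma$ preserves $\cI$ and the divisor of $Q$.} Since $h^\#=h$, we have $h(\overline W,\overline Z)=\overline{h(Z,W)}$, so $\sigma(\cI)=\cI$. Since $Q^\#=Q$ by Lemma~\ref{lem:sphere}, we have $Q(\overline W,\overline Z)=\overline{Q(Z,W)}$. Hence $\sigma$ maps the zero locus of $Q|_{\cI}$ to itself and preserves vanishing orders along each prime component, because complex conjugation of a local equation preserves order. Thus $\sigma_*\Div(Q|_{\cI})=\Div(Q|_{\cI})$. Moreover $\sigma$ interchanges $p_1$ and $p_2$ up to conjugation on $\PP^n$, so $\sigma$ sends $p_1^*D$ to $p_2^*\overline D$. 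On classes, $\sigma$ exchanges $H_1$ and $H_2$, because the conjugate of a hyperplane is a hyperplane. I expect this step to need the most care: one must check that an antiholomorphic map transports algebraic prime divisors and their multiplicities correctly. This works because conjugation is a field automorphism and $\sigma$ is an isomorphism of the underlying real-algebraic structures.

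\textbf{Step 2: restrict $a$ to $\{0,1\}$.} Apply $\sigma$ to \eqref{eq:globaldiv}, using Step 1:
\[
s\,\sigma(\Delta_\kappa)+p_2^*\overline{D_1}+p_1^*\overline{D_2}
=s\Delta_\kappa+p_1^*D_1+p_2^*D_2 .
\]
Here $\sigma(\Delta_\kappa)$ is prime of class $H_1+aH_2$, and $s\ge1$, so $\sigma(\Delta_\kappa)$ is a prime component of the right-hand side. Every prime component of $p_i^*D_i$ is of the form $p_i^{-1}(B)$ with $B$ a prime divisor, and so has class $dH_i$ for some $d$.

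Suppose $a\ge2$. Then the class $H_1+aH_2$ is neither $dH_1$, nor $dH_2$, nor $aH_1+H_2$, which is a contradiction. Hence $a\in\{0,1\}$, and the two cases are mutually exclusive because they have different values of $a$.

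\textbf{Step 3: the case $a=1$.} Here $\Delta_\kappa$ has class $H_1+H_2$, so it is not a pullback and is therefore a mixed component. By \eqref{eq:globaldiv}, every other prime component of $\Div(Q|_{\cI})$ is a pullback, so $\Delta_\kappa$ is the unique mixed component. The divisor $\sigma(\Delta_\kappa)$ is also a mixed prime component of $\Div(Q|_{\cI})$, since it has class $H_1+H_2$. Uniqueness then forces $\sigma(\Delta_\kappa)=\Delta_\kappa$, which is the claimed invariance under the Hermitian involution.

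\textbf{Step 4: the case $a=0$.} Here $\Delta_\kappa$ is an effective divisor of class $H_2$, so it is the zero divisor of a nonzero section of $\cO_{\cI}(0,1)$. Use the exact sequence from the proof of Lemma~\ref{lem:a-positive} with $a=0$:
\[
0\to\cO(-1,0)\to\cO(0,1)\to\cO_{\cI}(0,1)\to0 .
\]
By K\"unneth, $H^0(\cO(-1,0))=H^1(\cO(-1,0))=0$. Therefore
\[
H^0(\cO_{\cI}(0,1))=H^0(\cO(0,1)),
\]
so the section is the restriction of a linear form $L(W)$. Hence $\Delta_\kappa=p_2^{-1}(\Pi_L)$ with $\Pi_L=\{L(W)=0\}$, which is the pure branch.
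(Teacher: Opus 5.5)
Your proposal is correct and follows essentially the paper's route: $Q^\#=Q$ makes $\Div(Q|_{\cI})$ invariant under the Hermitian involution, which swaps $H_1$ and $H_2$; this is combined with global separation and the class $aH_1+H_2$, and for $a=0$ with the restriction sequence. The only difference is ordering: you exclude $a\ge2$ by matching the class of $\sigma(\Delta_\kappa)$ against the possible component classes (using $s\ge1$) before proving uniqueness, whereas the paper first proves uniqueness and invariance of the mixed component for $a>0$ and then reads off $a=1$ from $aH_1+H_2=H_1+aH_2$.
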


\begin{proof}
If $a=0$, the restriction sequence
\[
0\to\cO(-1,0)\xrightarrow{\cdot h}\cO(0,1)\to\cO_{\cI}(0,1)\to0
\]
shows that every section of $\cO_{\cI}(0,1)$ is the restriction of a
linear form $L(W)$. Since $\Delta_\kappa$ is an effective divisor of
class $H_2$, it follows that
\[
\Delta_\kappa = \{L(W)=0\}\cap\cI = p_2^{-1}(\Pi_L).
\]

If $a>0$,  then $[\Delta_\kappa]=aH_1+H_2$ has positive coefficients
in both $H_1$ and $H_2$, so $\Delta_\kappa$ is mixed. By
Proposition~\ref{prop:global}, every other prime component of
$\Div(Q|_{\cI})$ is a $p_1$- or $p_2$-pullback. Hence
$\Delta_\kappa$ is the unique mixed prime component. Since $Q^\#=Q$,
the divisor $\Div(Q|_{\cI})$ is invariant under the Hermitian
involution $\iota$, which interchanges $H_1$ and $H_2$. The
involution sends mixed prime components to mixed prime components.
Since $\Delta_\kappa$ is the unique mixed prime component, it
follows that $\iota(\Delta_\kappa)=\Delta_\kappa.$ Consequently,
\[
aH_1+H_2=[\Delta_\kappa] =[\iota(\Delta_\kappa)] =H_1+aH_2.
\]
Since $H_1$ and $H_2$ are independent in $\operatorname{Pic}(\cI)$,
it follows that $a=1$.
\end{proof}

The preceding results complete the geometric branch dichotomy. The
exclusion of the mixed branch and the resulting first-order
factorization are part of the classification argument and will be
taken up in Section~\ref{sec:classification}.

\section{Branch reduction and the complete classification}\label{sec:classification}
Section~\ref{sec:kernel-divisor} reduces the problem according to
$s=\ord_{\Delta_\kappa}(Q|_{\cI}).$ The case $s=0$ is
non-characteristic, while for $s>0$, Proposition~\ref{prop:puremixed}
divides the defect-one characteristic case into the pure and mixed
branches; Proposition~\ref{prop:rge2} shows that defect at least two
always yields a direction with $s=0$. In this section,
Proposition~\ref{prop:nomixed} excludes the mixed branch and
Proposition~\ref{prop:linear} shows that the case $s=0$ is totally
geodesic. The remaining pure characteristic case is treated in Proposition~\ref{prop:terminal} directly
by the geometry of its fixed center and the Griffiths--Harris theorem
on general fibers of a degenerate projective Gauss map.

\subsection{Exclusion of the mixed branch}

In this subsection, we assume the mixed branch defined in
Proposition~\ref{prop:puremixed} occurs, and try to reach a
contradiction. The restriction sequence for the $(1,1)$ incidence
hypersurface, together with the standard cohomology of projective
space, shows that every section of $\cO_{\cI}(1,1)$ lifts to a
bilinear form, uniquely modulo the one-dimensional space spanned by
$h$ (cf.~\cite[III.5]{Ha77}). Hence there is an endomorphism
$\Theta$ of $\CC^{n+1}$ such that
\begin{equation}\label{eq:kTheta}
\Delta_\kappa=\bigl\{([Z],[W])\in\cI:\ \ h(\Theta Z,W)|_{\cI}=0\bigr\}.
\end{equation}
Fiberwise equality of the characteristic hyperplanes lets us choose
the cone representative so that
\begin{equation}\label{eq:vThetaZ}
v(Z)=\Theta Z\pmod{\CC Z}.
\end{equation}
Adding a scalar multiple of the identity to $\Theta$ does not change
$h(\Theta Z,W)|_{\cI}$. Denote by $\Theta^\dagger$ the $h$-adjoint
of $\Theta$.

\begin{lemma}\label{lem:selfadjoint}
The representative $\Theta$ can be chosen such that
$\Theta=\ov{\Theta}^\dagger$.
\end{lemma}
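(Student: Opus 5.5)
We use the invariance of $\Delta_\kappa$ under the Hermitian involution $\iota([Z],[W])=([\overline W],[\overline Z])$ from Proposition~\ref{prop:puremixed}(b). Set $\beta(Z,W):=h(\Theta Z,W)$. Then
\[
\beta^\#(Z,W)=\overline{\beta(\overline W,\overline Z)}
=\overline{h(\Theta\overline W,\overline Z)}
=h(\overline\Theta W,Z)=h(Z,\overline\Theta W)=h(\overline\Theta^{\dagger}Z,W),
\]
where we used the symmetry of $h$ and the definition of the $h$-adjoint. The zero set of $\beta^\#$ on $\cI$ is exactly $\iota(\Delta_\kappa)$, because $h^\#=h$ means $\iota$ preserves $\cI$ and pulls $\beta$ back to the conjugate of $\beta^\#$. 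Since $\iota(\Delta_\kappa)=\Delta_\kappa$ and $\Delta_\kappa$ is prime and reduced, being the closure of a relative hyperplane bundle, the two sections $\beta|_{\cI}$ and $\beta^\#|_{\cI}$ of $\cO_{\cI}(1,1)$ have the same reduced irreducible divisor. Hence they differ by a nowhere-vanishing global function on the projective variety $\cI$, that is, by a constant $c\in\CC^*$:
\[
h(\overline\Theta^{\dagger}Z,W)\equiv c\,h(\Theta Z,W)\pmod{h}.
\]
By the lifting statement quoted before \eqref{eq:kTheta} (uniqueness modulo $\CC h$), this gives $\overline\Theta^{\dagger}=c\Theta+tI$ for some $t\in\CC$.

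Next we pin down $c$. Apply the operation $X\mapsto\overline X^{\dagger}$ once more. Since $h$ has real coefficients, $\overline{\overline\Theta^{\dagger}}^{\dagger}=\Theta$, so
\[
\Theta=\overline c\,\overline\Theta^{\dagger}+\overline t I=|c|^2\Theta+(\overline c t+\overline t)I.
\]
Suppose $\Theta$ is not scalar. This holds because a scalar $\Theta$ would make $h(\Theta Z,W)$ vanish identically on $\cI$, contradicting that \eqref{eq:kTheta} defines a divisor. Then $|c|=1$ and $\overline c t+\overline t=0$. Write $c=e^{i\phi}$ and replace $\Theta$ by $\Theta'=e^{i\phi/2}\Theta+sI$ with $s\in\CC$ to be chosen. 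Rescaling by a nonzero constant and adding a scalar leaves both \eqref{eq:kTheta} and \eqref{eq:vThetaZ} unchanged, the latter up to rescaling the cone representative by a constant, which is harmless by Lemma~\ref{lem:intrinsic}. We compute
\[
\overline{\Theta'}^{\dagger}=e^{-i\phi/2}(c\Theta+tI)+\overline sI=e^{i\phi/2}\Theta+(e^{-i\phi/2}t+\overline s)I.
\]
So the condition $\overline{\Theta'}^{\dagger}=\Theta'$ amounts to $s-\overline s=e^{-i\phi/2}t$. Solving this requires $e^{-i\phi/2}t$ to be purely imaginary, and this follows from $\overline c t=-\overline t$, since
\[
\overline{e^{-i\phi/2}t}=e^{i\phi/2}\overline t=-e^{i\phi/2}e^{-i\phi}t=-e^{-i\phi/2}t.
\]
Taking $s=\tfrac12 e^{-i\phi/2}t$ finishes the construction.

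The main obstacle is the first step, where we deduce $\beta^\#|_{\cI}=c\,\beta|_{\cI}$ from the equality of divisors. This needs $\Delta_\kappa$ to be exactly the reduced zero divisor of $h(\Theta Z,W)|_{\cI}$, with no multiplicity and no extra components. That is how \eqref{eq:kTheta} should be read: $\Theta$ comes from a section of $\cO_{\cI}(1,1)$ whose divisor is $\Delta_\kappa$, of class $H_1+H_2$ by Proposition~\ref{prop:puremixed}. We also need that $\cI$ is projective and connected, so its global functions are constants. Once these are in place, the rest is the linear algebra above.
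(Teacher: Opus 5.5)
Your proof is correct and takes essentially the same route as the paper. You use the Hermitian invariance of $\Delta_\kappa$ to get $\bigl(h(\Theta Z,W)|_{\cI}\bigr)^\#=c\,h(\Theta Z,W)|_{\cI}$, apply the involution twice to force $|c|=1$, and then fix $\Theta$ by a unimodular rescaling plus a scalar shift, doing both at once via $\overline\Theta^{\dagger}=c\Theta+tI$, whereas the paper first normalizes $c=1$ and then shows $\overline\Theta^{\dagger}-\Theta=\mu I$ with $\mu$ purely imaginary.
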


\begin{proof}
By Lemma~\ref{lem:sphere}, $Q^\#=Q$, and by
Propositions~\ref{prop:global} and~\ref{prop:puremixed},
$\Delta_\kappa$ is the unique mixed prime component of
$\Div(Q|_{\cI})$.  Hence the $\#$-conjugate of $h(\Theta
Z,W)|_{\cI}$ defines the same divisor, and therefore
\[
\bigl(h(\Theta Z,W)|_{\cI}\bigr)^\# = c\,h(\Theta Z,W)|_{\cI}
\]
for some $c\in\CC^*$.  Applying $\#$ again gives
\[
\,h(\Theta Z,W)|_{\cI}=\big(\bigl(h(\Theta
Z,W)|_{\cI}\bigr)^\#\big)^\# = \bigl( c h(\Theta
Z,W)|_{\cI}\bigr)^\#=|c|^2\,h(\Theta Z,W)|_{\cI}.
\]
Hence $|c|=1$. After multiplying $\Theta$ by a suitable constant, we
may assume that $h(\Theta Z,W)|_{\cI}$ is $\#$-invariant. This
implies
\[
h(\Theta Z,W)=\bigl(h(\Theta Z,W)\bigr)^\#=h(\ov{\Theta}
W,Z)=h(Z,\ov{\Theta} W)=h(\ov{\Theta}^\dagger Z, W) \qquad\text{on
}\cI.
\]
Hence
\[
h((\ov{\Theta}^\dagger-\Theta)Z,W)=0 \qquad\text{on }\cI.
\]
Since this is a bilinear form of bidegree $(1,1)$ vanishing on
$\{h=0\}$, there is $\mu\in\CC$ such that
\[
{\ov{\Theta}}^\dagger-\Theta=\mu I.
\]
Taking adjoints gives $\overline\mu=-\mu$, so $\mu$ is purely
imaginary. Replacing $\Theta$ by $\Theta+\frac{\mu}{2}I$ therefore
makes it self-adjoint.  The above modification does not change
$h(\Theta Z,W)|_{\cI}$. And thus equation \eqref{eq:kTheta} remains
unchanged.
\end{proof}

We also need the following linearity theorem of general Gauss fibers
achieved by Griffiths--Harris~\cite[(2.10), p.~388]{GH79}.
\begin{theorem}[Griffiths--Harris]\label{thm:GH}
Let $X\subset\PP^N$ be a projective variety of dimension $n$,
and let $\Gamma_X$ be its projective Gauss map defined in \eqref{eq:projGauss}. Then there exists a dense open subset
$U\subset X_{\mathrm{reg}}$ such that for every tangent plane $T$, its fiber of the projective Gauss map
\[\Gamma_X^{-1}(T)=\{x\in X_{\mathrm{reg}}:\widehat{T}_xX=T\} \]
meeting $U$ is an open subset of a projective linear subspace.
Equivalently, the closure of a general fiber of $\Gamma_X$ is a
projective linear subspace.
\end{theorem}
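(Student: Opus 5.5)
This is the classical Griffiths--Harris linearity theorem for Gauss fibers; the paper only cites it. If we were to reprove it, the natural route is the moving-frame argument via the second fundamental form. The plan has three steps: work on the dense open set where the Gauss map has constant rank, show that the fibers are totally geodesic for the projective connection, and conclude that they are open pieces of linear spaces.

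\textbf{Setup and kernel.} Let $U\subset X_{\mathrm{reg}}$ be the dense Zariski open set on which $\Gamma_X$ has constant rank $n-\delta$. There the fibers $\Gamma_X^{-1}(T)\cap U$ are smooth of dimension $\delta$. Choose a local Darboux frame $e_0,\dots,e_N$ adapted so that:
\begin{itemize}
\item $e_0$ spans the point $x$;
\item $e_0,\dots,e_n$ span $\widehat T_xX$.
\end{itemize}
The structure equations give $de_0=\omega^i e_i$ ($1\le i\le n$) and $de_i\equiv \omega_i^\mu e_\mu \pmod{\widehat T_xX}$ with $\mu>n$. Moreover $\omega_i^\mu=q^\mu_{ij}\omega^j$, where $q^\mu_{ij}$ is symmetric; this is the projective second fundamental form $II$. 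The differential of $\Gamma_X$ at $x$ is identified with $II$, viewed as a map $T_x\to \operatorname{Hom}(T_x,N_x)$. Its kernel is the singular locus $\operatorname{Sing}(II)=\{v: II(v,\cdot)=0\}$, which equals the tangent space of the fiber through $x$.

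\textbf{Total geodesy of fibers.} Along a fiber $F$, the plane $\widehat T_xX=T$ is constant. Hence for every tangent vector $v$ of $F$ we have $de_i(v)\in T$ for all $i\le n$. We then differentiate the relation $q^\mu_{ij}v^j=0$ along $F$, using the exterior derivative of the structure equations (the cubic form, i.e. the Codazzi-type identity in which $dq^\mu_{ij}$ modulo connection terms is symmetric in three indices). This shows that for $v,w$ tangent to $F$ the covariant derivative $\nabla_v w$, computed inside $T$, again lies in $\operatorname{Sing}(II)$ modulo $e_0$. In other words, the span $\widehat F_x:=\Span\{e_0, \ker II_x\}\subset\CC^{N+1}$ satisfies $d(\widehat F_x)\subset \widehat F_x$ when we move along $F$. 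Consequently $\widehat F_x$ is a constant linear subspace $L$ of dimension $\delta+1$ along the fiber. Since $F$ is $\delta$-dimensional and is contained in $\PP(L)$, it is an open subset of the linear space $\PP(L)$.

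\textbf{Main obstacle.} The hard part is the middle step: checking that the Codazzi/cubic-form identity forces $\nabla_v w\in\ker II$. Concretely, one must prove that $q^\mu_{ijk}v^jw^k$ is killed modulo terms already in $\ker II$, where $q^\mu_{ijk}$ denotes the cubic (third fundamental) form. I would first try the standard trick: differentiate $q^\mu_{ij}v^j\equiv0$, then use the full symmetry of $q^\mu_{ijk}$ to swap indices so the resulting term becomes $II(\nabla_vw,\cdot)$. A further subtlety is using the constant-rank hypothesis, so that $\ker II$ really is a smooth distribution. Its integrability then follows from the same computation.

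Finally, the closure of a general fiber is an irreducible $\delta$-dimensional subvariety of $\PP(L)$ that contains an open subset of $\PP(L)$, so it equals $\PP(L)$.
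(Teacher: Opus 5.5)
Your local argument is correct. It is essentially Griffiths--Harris's own moving-frame proof, which the paper only cites, and the step you flag as the main obstacle does go through. On the constant-rank set, choose the frame so that $e_1,\dots,e_\delta$ span $\operatorname{Sing}(II)$; then $q^\mu_{aj}\equiv0$ for $a\le\delta$. With $de_A=\omega_A^Be_B$, the Codazzi relation for these entries reduces to $q^\mu_{ajk}\,\omega^k=-q^\mu_{\rho j}\,\omega^\rho_a$, summed over $\delta<\rho\le n$.
\begin{itemize}
\item Taking $j=b\le\delta$ gives $q^\mu_{abk}=0$ for all $k$.
\item Total symmetry then gives $q^\mu_{ajb}=0$ for all $j$.
\item Evaluating the relation on the fiber direction $\partial_b$ dual to $\omega^b$ yields $q^\mu_{\rho j}\,\omega^\rho_a(\partial_b)=0$. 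Hence $\omega^\rho_a(\partial_b)=0$, because $II$ has trivial kernel on $\Span\{e_\rho\}$.
\end{itemize}
Together with $\omega^\mu_a=0$ and the vanishing of $\omega^\rho$ along the leaf, this makes $\Span\{e_0,\dots,e_\delta\}$ constant along each leaf. Integrability needs no separate computation, since the leaves are level sets of a constant-rank map.

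The genuine gap is your final sentence. The computation only shows that each \emph{connected component} of $\Gamma_X^{-1}(T)\cap U$ is open in its own linear space $\PP(L_x)$, where $L_x=\CC e_0\oplus\operatorname{Sing}(II_x)$. You give no reason why different components of one fiber have the same $L_x$, and the asserted irreducibility of the closure of a general fiber is exactly this missing claim. It cannot come from the local computation alone. For the cone over a general plane quartic, the plane through the vertex and a bitangent line is tangent along two distinct rulings, both inside the constant-rank locus. So the fiber over that $T$ consists of two distinct lines minus the vertex, and the constant-rank locus is not yet an admissible $U$. Genericity of $T$ must be used.

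A short repair within your framework:
\begin{itemize}
\item Let $Y=\overline{\Gamma_X(X)}$, of dimension $n-\delta$, and replace $U$ by $U\cap\Gamma_X^{-1}(Y_{\mathrm{reg}})$.
\item For $x$ in this set with $\Gamma_X(x)=T$, one has $\im d\Gamma_x\subset T_TY$, and both have dimension $n-\delta$. So $\im d\Gamma_x=T_TY$, independent of $x$ in the fiber.
\item Since $d\Gamma_x(w)(ce_0+u^ie_i)\equiv q^\mu_{ij}u^iw^je_\mu \pmod{\widehat T}$, the common kernel in $\widehat T$ of all elements of $\im d\Gamma_x\subset\operatorname{Hom}(\widehat T,\CC^{N+1}/\widehat T)$ is exactly $L_x$.
\item Hence $L_x$ depends only on $T$, and all of $\Gamma_X^{-1}(T)\cap U$ is open in a single $\PP(L)$.
\item A dimension count shows that for general $T$ every component of the full fiber in $X_{\mathrm{reg}}$ meets $U$. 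So the closure of a general fiber is $\PP(L)$.
\end{itemize}
For the paper's applications in Lemmas~\ref{lem:kernelfiber} and~\ref{lem:pureradial}, your component-wise statement would already suffice. There only a local arc of an irreducible curve needs to be placed in a line.
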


\begin{lemma}\label{lem:kernelfiber}
Work on a Zariski open set on which $[P]$ is immersive, the Gauss
defect is one, and the mixed-branch kernel line is represented by
$v(Z)=\Theta Z$.  If $[Z(t)]$ is a local integral curve of this line
field, then $\Gamma_{\rm proj}([Z(t)])$ is constant.  For a general
initial point, $[P(Z(t))]$ is therefore contained in one general
fiber of the projective Gauss map of $X$, whose closure is a
projective line.
\end{lemma}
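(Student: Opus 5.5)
The plan has three steps: show the Gauss map is constant along the integral curve, identify the image curve with an open piece of a Gauss fiber, and invoke Theorem~\ref{thm:GH} together with the defect-one hypothesis.

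\textbf{Constancy along the curve.} Let $Z(t)$ be a lift of the integral curve. Using homogeneity and Euler's identity, reparametrize and rescale so that $\dot Z(t)=v(Z(t))+c(t)Z(t)$. Writing $\Gamma_{\rm proj}([Z])$ as the span of the columns of $DP_Z$ is legitimate, since Lemma~\ref{lem:DPinjective} gives $P\in\im DP$. Differentiating a generator $DP_{Z(t)}(\xi)$ with $\xi$ constant gives
\[
D^2P_{Z(t)}(\dot Z,\xi)=D^2P(v,\xi)+c(t)D^2P(Z,\xi).
\]
By Lemma~\ref{lem:canonicalA}, the first term equals $DP(A\xi)$. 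By Euler's identity applied to $DP$, which is homogeneous of degree $m-1$, the second term equals $c(t)(m-1)DP(\xi)$. So $\frac{d}{dt}DP_{Z(t)}(\xi)\in\im DP_{Z(t)}$ for every $\xi$. The standard Grassmannian argument now applies: choose a frame $E(t)$ of $\im DP_{Z(t)}$ satisfying $\dot E=E\,M(t)$, solve the linear ODE $\dot g=-Mg$, and note that $E(t)g(t)$ is a constant frame. Hence the span is constant, and $\Gamma_{\rm proj}([Z(t)])$ is constant. This only uses $v\in\cK$, which holds on the chosen Zariski open set.

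\textbf{The image curve is a Gauss fiber piece.} By \eqref{eq:Gamma-factor}, $x(t)=[P(Z(t))]$ satisfies $\Gamma_X(x(t))=\Gamma_X(x(0))$, so the curve lies in one fiber of $\Gamma_X$. Since $[P]$ is immersive and $v\notin\CC Z$, $x(t)$ is a nonconstant immersed curve. For a general initial point $[Z(0)]$, the point $x(0)$ is general in $X$ because $[P]$ is dominant onto $X$. It also lies in the open set $U$ of Theorem~\ref{thm:GH} and in the open set where $\Gamma_X$ has constant rank $n-1$; here defect one, via Proposition~\ref{prop:bridge}, gives $\dim$ fiber $=n-\rk d\Gamma_X=1$. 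By Theorem~\ref{thm:GH}, the fiber through $x(0)$ is an open subset of a projective linear subspace $\Lambda$. Because the generic fiber dimension is $1$, $\Lambda$ is a line, and the curve $x(t)$ lies in $\Lambda$. In fact $x(t)$ traces an open arc of $\Lambda$: $d[P]$ maps $\kappa$ injectively into $\ker d\Gamma_X$, which is one-dimensional, so the kernel direction of $\Gamma_X$ is exactly $d[P](\kappa)$.

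\textbf{Main obstacle.} The delicate point is the genericity bookkeeping. One must intersect the Zariski open sets where $[P]$ is immersive, the image is regular, $\Gamma_X$ has rank $n-1$, and Theorem~\ref{thm:GH} applies. The open set $U\subset X_{\rm reg}$ pulls back under the dominant map $[P]$ to a nonempty Zariski open set in $\PP^n$. I would handle this by taking all these sets Zariski open and dense, which is compatible with choosing a general initial point. The mixed-branch normalization $v=\Theta Z$ plays no role beyond ensuring $v$ is a regular kernel representative on the open set.
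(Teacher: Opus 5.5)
Your proposal is correct and follows essentially the same route as the paper: constancy of $\Gamma_{\rm proj}$ along the kernel curve, then factorization through $\Gamma_X$, then Theorem~\ref{thm:GH} with defect one to identify the fiber closure as a line. The only real difference is that you verify constancy by an explicit frame/ODE computation using Lemma~\ref{lem:canonicalA} and Euler's identity, whereas the paper just applies the chain rule, $\frac{d}{dt}\Gamma_{\rm proj}([Z(t)])=d\Gamma_{\rm proj}(\dot{[Z]})=0$ since $\dot{[Z]}\in\kappa\subset\cK$; your remark that $x(t)$ is an immersed open arc of $\Lambda$ matches the paper's observation that $P,\dot P$ span the fixed two-plane over that line.
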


\begin{proof}
Along an integral curve,
\[
\frac d{dt}\Gamma_{\rm proj}([Z(t)])
=d\Gamma_{\rm proj}\bigl([\Theta Z(t)]\bigr)=0,
\]
so the tangent plane is constant.  On the chosen open set,
$\Gamma_{\rm proj}([Z])$ is precisely the embedded projective tangent
space of $X$ at $[P(Z)]$.  Hence the image curve lies in one fiber of
the projective Gauss map of $X$.  The defect is one, so the general
smooth fiber has dimension one; Theorem~\ref{thm:GH} says that its
projective closure is a line.  Locally the integral curve is
represented by $Z(t)=e^{t\Theta}Z_0$, and $P(Z(t))$ lies, for all
small $t$, in the fixed two-dimensional vector space over that
projective line. Moreover $P$ and $\dot P=DP(\Theta Z)$ are linearly
independent at a general point: otherwise the projective differential
of $[P]$ would kill the nonzero tangent class of $\Theta Z$,
contradicting immersivity.
\end{proof}
Lemma~\ref{lem:kernelfiber} provides the key geometric input for
proving that the minimal polynomial of $\Theta$ has degree at most
two.
\begin{lemma}\label{lem:minpoly}
In the mixed branch, the minimal polynomial of $\Theta$ has degree
at most two.
\end{lemma}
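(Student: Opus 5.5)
We plan to extract from Lemma~\ref{lem:kernelfiber} a polynomial identity in $t$ and then use it to bound the Krylov space of $\Theta$ generated by a general vector. Fix a general $Z_0$ and set $Z(t)=e^{t\Theta}Z_0$. By Lemma~\ref{lem:kernelfiber}, the vectors $P(Z(t))$ all lie in a fixed two-dimensional subspace
\[
\Lambda_{Z_0}:=\Span\{P(Z_0),DP_{Z_0}(\Theta Z_0)\}\subset\CC^{N+1}.
\]
Differentiating in $t$ shows that every derivative $\frac{d^k}{dt^k}P(Z(t))|_{t=0}$ lies in $\Lambda_{Z_0}$. In particular the second derivative
\[
D^2P_{Z_0}(\Theta Z_0,\Theta Z_0)+DP_{Z_0}(\Theta^2Z_0)
\]
lies in $\Lambda_{Z_0}=DP_{Z_0}(\Span\{Z_0,\Theta Z_0\})$, using Euler's identity $P=m^{-1}DP(Z)$.

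The main idea is to compare this with Lemma~\ref{lem:canonicalA}. Since $v=\Theta Z$ modulo $\CC Z$ lies in the Gauss kernel, we have $D^2P(v,\xi)=DP(A\xi)$. With $\xi=\Theta Z_0$ this gives $D^2P_{Z_0}(\Theta Z_0,\Theta Z_0)\in\im DP$. Adjusting the representative by a multiple of $Z$ only changes things by $D^2P(Z,\cdot)=(m-1)DP(\cdot)$, so this costs nothing.

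The line property is stronger than this, because all the higher derivatives lie in the fixed $2$-plane. Since $DP_{Z_0}$ is injective (Lemma~\ref{lem:DPinjective}), we will show inductively that the $\Theta$-orbit contributions collapse. The goal is
\[
\Theta^2 Z_0\in\Span\{Z_0,\Theta Z_0\}.
\]

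An alternative route avoids second derivatives entirely and seems cleaner. The closure of $[P(Z(t))]$ is a line $\ell\subset X$ on which the tangent space $\widehat T$ is constant. The curve $t\mapsto [Z(t)]$ is the orbit of the one-parameter group $e^{t\Theta}$. By homogeneity, $[P]$ restricted to the orbit closure maps into $\ell$.

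Since $[P]$ is generically immersive and $\ell$ is one-dimensional, the preimage component through $[Z_0]$ is locally the integral curve. Now $\Delta_\kappa$ is defined by $h(\Theta Z,W)$, and the Gauss-kernel line field is integrable and rational. Its leaves are the Gauss fibers pulled back, which are algebraic curves mapping onto lines.

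We argue that the leaf through $[Z_0]$ is contained in the projective line $\PP(\Span\{Z_0,\Theta Z_0\})$. The leaves are the fibers of the rational map $\Gamma_{\rm proj}$, so they are algebraic curves. The orbit $\{[e^{t\Theta}Z_0]\}$ is algebraic for general $Z_0$ only when its Zariski closure has small span. The desired statement is that the orbit spans only a $2$-dimensional space, i.e.\ $Z_0,\Theta Z_0,\Theta^2Z_0$ are dependent for general $Z_0$.

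Once we have $\Theta^2Z\in\Span\{Z,\Theta Z\}$ for $Z$ in a Zariski dense set, we conclude with a standard linear-algebra fact: if every vector has Krylov space of dimension at most $2$, then the minimal polynomial has degree at most $2$. This is because a cyclic vector for the largest invariant factor exists, and the dimension of its Krylov space equals the degree of the minimal polynomial.

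The main obstacle is deriving $\Theta^2Z_0\in\Span\{Z_0,\Theta Z_0\}$ from the fact that $P(e^{t\Theta}Z_0)$ stays in a $2$-plane. The plan is as follows.
\begin{itemize}
\item Take the third derivative (or the second, as above) and use the identity $D^2P(v,\xi)=DP(A\xi)$ in the variable-$v$ form. Since $v(Z(t))=\Theta Z(t)$ exactly along the curve, this gives
\[
\tfrac{d^2}{dt^2}P(Z(t))=DP\bigl(A\Theta Z+\Theta^2Z\bigr).
\]
\item Injectivity of $DP$ then forces $A\Theta Z+\Theta^2Z\in\Span\{Z,\Theta Z\}$.
\item Determine $A\Theta Z$ by differentiating the Euler relation $DP(v)$ along $v$. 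By homogeneity of degree $m-1$ this yields $A\Theta Z\in\Span\{Z,\Theta Z\}$, hence the claim.
\end{itemize}

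If this bookkeeping fails, the fallback is to use self-adjointness (Lemma~\ref{lem:selfadjoint}) and the reality of $\Delta_\kappa$ to diagonalize $\Theta$. Restricting to the null cone, we would then show that three distinct eigenvalues would give a non-linear orbit, which contradicts Theorem~\ref{thm:GH}.
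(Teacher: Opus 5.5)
\textbf{The gap is your third bullet.} Your first two bullets are correct and match half of the paper's argument. Along $Z(t)=e^{t\Theta}Z_0$ you have $\ddot P=DP(A\Theta Z+\Theta^2Z)$. The two-plane property of Lemma~\ref{lem:kernelfiber} and injectivity of $DP$ then give $A\Theta Z+\Theta^2Z\in\Span\{Z,\Theta Z\}$.

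The third bullet does not follow from homogeneity. Differentiating Euler's identity $DP_Z(Z)=mP(Z)$ in the direction $v$ gives $D^2P(v,Z)=(m-1)DP(v)$, i.e.\ $AZ=(m-1)\Theta Z$. Homogeneity therefore fixes $A$ only on the radial vector $Z$. It says nothing about $A(\Theta Z)$, which is governed by $D^2P(\Theta Z,\Theta Z)$. Worse, given your second bullet, the claim $A\Theta Z\in\Span\{Z,\Theta Z\}$ is \emph{equivalent} to the goal $\Theta^2Z\in\Span\{Z,\Theta Z\}$, so the step is circular.

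The missing input is the sphere identity, which enters through Proposition~\ref{prop:log}. With the representative $v=\Theta Z$ you have $Dv=\Theta$, so
\[
A-s\Theta=\lambda I+Z\otimes a+\Theta Z\otimes b .
\]
Applying this to $\Theta Z$ gives $A\Theta Z-s\Theta^2Z\in\Span\{Z,\Theta Z\}$. Subtracting it from your second-derivative relation yields $(s+1)\Theta^2Z\in\Span\{Z,\Theta Z\}$. Since $s+1\neq0$, your Krylov-space argument then finishes exactly as in the paper.

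\textbf{The alternative route, the higher-derivative induction and the fallback fail for the same reason.} Theorem~\ref{thm:GH} linearizes the Gauss fiber in $X$, not its preimage in the source, and $[P]$ is nonlinear. So projective data alone cannot bound the minimal polynomial.

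Here is a counterexample. $P=(Z_1^5,\,Z_0Z_1^4,\,Z_0^2Z_1^2Z_2,\,Z_0^3Z_2^2)$ is a reduced, generically immersive parametrization of the quadric cone $\{U_1U_3=U_2^2\}\subset\PP^3$. Its Gauss fibers are the rulings. Their preimages are the conics $\{Z_0Z_2=cZ_1^2\}$, which are exactly the orbits of $e^{t\Theta}$ with $\Theta=\operatorname{diag}(0,1,2)$. Hence:
\begin{itemize}
\item the Gauss kernel is $[\Theta Z]$;
\item every orbit is algebraic and is mapped into a line, so every derivative along it stays in a fixed $2$-plane;
\item yet the minimal polynomial of $\Theta$ has degree three.
\end{itemize}
This refutes your claim that algebraic orbits must span a $2$-plane. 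It also refutes the third bullet itself in this example. The fallback gains nothing either: here $\Theta$ is $h$-self-adjoint and diagonalizable with three distinct real eigenvalues, and this is perfectly compatible with Theorem~\ref{thm:GH}.

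What excludes such behavior for ball maps is precisely the relation $A\Theta Z\equiv s\Theta^2Z$ modulo $\Span\{Z,\Theta Z\}$. It records the vanishing order $s\ge1$ of $Q|_{\cI}$ along $\Delta_\kappa$, and your proposal never uses it.
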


\begin{proof}
Choose the representative $v=\Theta Z$.  Since the first
differential $Dv=\Theta$, Proposition~\ref{prop:log} and its proof
give
\[
A-s\Theta=\lambda I+Z\otimes a+\Theta Z\otimes b,
\]
where $A$ is defined in \eqref{lem:canonicalA}. Applying this to
$\Theta Z$ yields
\begin{equation}\label{eq:minpoly1}
A(\Theta Z)-s\Theta^2Z\in\Span\{Z,\Theta Z\}.
\end{equation}

By Lemma~\ref{lem:kernelfiber}, for a general kernel integral curve
the vector $P(Z(t))$ remains in a fixed two-dimensional space, with
$P,\dot P$ forming a basis.  Hence
\[
\ddot P=D^2P(\Theta Z,\Theta Z)+DP(\Theta^2Z) \in\Span\{P,DP(\Theta
Z)\}.
\]
Using $D^2P(\Theta Z,\Theta Z)=DP(A(\Theta Z))$ and Euler's identity
$P=m^{-1}DP(Z)$, injectivity of $DP$ gives
\begin{equation}\label{eq:minpoly2}
A(\Theta Z)+\Theta^2Z\in\Span\{Z,\Theta Z\}.
\end{equation}
Subtracting \eqref{eq:minpoly1} from \eqref{eq:minpoly2} gives $
(s+1)\Theta^2Z\in\Span\{Z,\Theta Z\}.$

Let $\mathcal{C}_Z$ be the cyclic subspace generated by $Z$:
$$
\mathcal{C}_Z=\operatorname{span}_\CC\{Z,\Theta Z,\Theta^2 Z,\cdots\}
$$
A standard fact in finite-dimensional linear algebra is that the degree of minimal
polynomial of $\Theta$ is $\max\{\text{dim}({\mathcal{C}}_Z):\ Z\in \mathbb{C}^{n+1}\}$, which completes the proof of Lemma \ref{lem:minpoly}.
\end{proof}

\begin{lemma}\label{lem:indexoneTheta} 
In the mixed branch, there exist distinct
real numbers $\lambda,\mu$ and a negative line $E_-\subset\CC^{n+1}$
such that
\begin{equation}\label{eq:Thetaform}
\Theta=\mu I+(\lambda-\mu)\pi_{E_-},
\end{equation}
where $\pi_{E_-}$ is the $H$-orthogonal projection onto $E_-$.
\end{lemma}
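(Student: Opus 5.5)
The plan is to read $\Theta$ as an $H$-self-adjoint operator and combine three inputs: the minimal-polynomial bound of Lemma~\ref{lem:minpoly}, the positivity of $Q$ on the null cone from Lemma~\ref{lem:positivity}, and the fact that $s\ge1$ forces $Q|_{\cI}$ to vanish on $\Delta_\kappa$. By Lemma~\ref{lem:selfadjoint} we may take $\Theta=\ov{\Theta}^\dagger$. Then the Hermitian form
\[
q(Z):=h(\Theta Z,\ov Z)
\]
is real-valued, and $\Theta$ is self-adjoint for the indefinite Hermitian form $H$ of signature $(n,1)$. The operator $\Theta$ is not scalar: otherwise $v=\Theta Z\in\CC Z$ would not define a kernel direction. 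Hence Lemma~\ref{lem:minpoly} says its minimal polynomial has degree exactly two.

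\emph{Step 1: $q$ has constant sign on the null cone.} Suppose $Z\ne0$ with $H(Z)=0$. Then $([Z],[\ov Z])\in\cI$. If $q(Z)=0$, this point lies on $\Delta_\kappa$ by \eqref{eq:kTheta}. Since $s\ge1$, $Q$ would vanish there, contradicting $Q(Z,\ov Z)>0$ from Lemma~\ref{lem:positivity}. (One checks that the real points of $\cI$ are not swallowed by the indeterminacy locus; they form a Zariski-dense totally real set, so this is a genuine point of $\Delta_\kappa$.) So $q$ has no zeros on $\{H=0\}\setminus\{0\}$. For $n\ge2$ this set is connected, since it projectivizes to $S^{2n-1}$. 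Therefore $q$ has a fixed sign there, and after replacing $\Theta$ by $-\Theta$ we may assume $q>0$.

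\emph{Step 2: real diagonalizability.} Apply the S-lemma (Finsler's lemma) for the pair of Hermitian forms $(q,H)$. Since $q>0$ on the nonzero null vectors of $H$, there is a real $t$ with $q-tH$ positive definite. Equivalently, $\Theta-tI$ is self-adjoint with respect to $H$ and positive with respect to it. A standard argument then applies: such an operator is diagonalizable, its eigenvalues are real, and its eigenspaces are mutually $H$-orthogonal and $H$-nondegenerate. (Jordan blocks and non-real eigenvalues would both force an isotropic vector $Z$ with $q(Z)-tH(Z)=0$.) Combined with the minimal polynomial of degree two, this gives
\[
\Theta=\mu I+(\lambda-\mu)\pi_{E},\qquad \CC^{n+1}=E\oplus E^{\perp_H},
\]
with $\lambda\ne\mu$ real and $E$ the $\lambda$-eigenspace. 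Since $H$ has exactly one negative direction and the splitting is $H$-orthogonal and nondegenerate, exactly one summand is a Lorentzian space containing the negative direction; the other is positive definite.

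\emph{Step 3 (main obstacle): the Lorentzian eigenspace is a line.} We must show the Lorentzian summand is one-dimensional; after relabeling $\lambda\leftrightarrow\mu$ it is then $E_-$. I would use the geometry of the Gauss fibers. The integral curves of the kernel field are
\[
e^{t\Theta}Z_0=e^{t\lambda}\pi_EZ_0+e^{t\mu}\pi_{E^\perp}Z_0,
\]
so they sweep out the projective lines joining $[\pi_EZ_0]\in\PP(E)$ and $[\pi_{E^\perp}Z_0]\in\PP(E^\perp)$. By Lemma~\ref{lem:kernelfiber} together with Theorem~\ref{thm:GH}, each such line is mapped by $[P]$ into a single line of $X$ along which the tangent plane is constant. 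I would then rerun the positivity argument of Step~1 on the \emph{real} points of these lines to show that the Lorentzian summand cannot contain a positive vector. The idea is this. If the Lorentzian summand had dimension at least two, it would contain a null vector $Z$ with $\Theta Z=\lambda Z$, so that the kernel field degenerates at the boundary point $[Z]$. I expect that, at such a point, differentiating the identity \eqref{eq:DQ0} along $\Delta_\kappa$ forces $Q(Z,\ov Z)=0$, which again contradicts Lemma~\ref{lem:positivity}. Making this last contradiction precise is where I expect most of the work. If it resists, the fallback is a degree count: compare the classes of $\Delta_\kappa$ and of $\{h(\pi_EZ,W)=0\}\cap\cI$, using the divisor decomposition of Proposition~\ref{prop:global}.
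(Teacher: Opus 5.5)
\textbf{Gap in Step 3.} Steps 1 and 2 are sound. Step 3, which you call the main obstacle, is left unproved. The routes you offer (Gauss fibers, differentiating \eqref{eq:DQ0} where the kernel field degenerates, or a divisor-class count) are only sketched, with ``I expect\dots''. So as written, the proposal does not show that the Lorentzian eigenspace is a line.

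No new input is needed, because Step 1 already finishes the job. Suppose the Lorentzian summand $E$ had dimension at least two. Then it has signature $(k,1)$ with $k\ge1$, so it contains a nonzero null vector $Z$. Since $\Theta Z=\lambda Z$, you get $h(\Theta Z,\ov Z)=\lambda H(Z)=0$. Hence $([Z],[\ov Z])\in\Delta_\kappa$ and $Q(Z,\ov Z)=0$, contradicting Lemma~\ref{lem:positivity} exactly as in Step 1. You even wrote down this null eigenvector, but then routed the argument through the kernel field instead of evaluating $q$ on it. This one-line argument is precisely how the paper concludes.

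\textbf{Step 2 versus the paper.} Your use of the Hermitian S-lemma is correct. It does hold, by convexity of the joint numerical range of $(q,H)$, and this is why you needed the sign normalization from connectedness of the null cone. It is, however, heavier than necessary. The paper gets real, semisimple spectrum directly from $q\ne0$ on the nonzero null cone:
\begin{enumerate}
\item An eigenvector $x$ for a nonreal eigenvalue $\lambda$ must be null, because $\lambda H(x)=q(x)$ is real; then $q(x)=0$.
\item If the minimal polynomial were $(x-\lambda)^2$, write $\Theta=\lambda I+\mathsf N$. Then any $e=\mathsf N x\ne0$ satisfies $H(e)=h(\mathsf N^2x,\ov x)=0$ and $q(e)=0$.
\end{enumerate}
So the sign-fixing and Finsler's lemma can be dropped. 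The whole lemma follows from the single nonvanishing statement of Step 1, applied three times: to nonreal eigenvectors, to the nilpotent part, and to a null vector in the Lorentzian eigenspace.
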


\begin{proof}
By Lemmas~\ref{lem:selfadjoint} and~\ref{lem:minpoly},
$\Theta={\ov{\Theta}}^\dagger$ and its minimal polynomial has degree at
most two.  Moreover, if $0\ne Z$ is null and $h(\Theta Z,\ov Z)=0$, then
$([Z],[\overline Z])\in\Delta_\kappa$, so $Q(Z,\overline Z)=0$,
contradicting Lemma~\ref{lem:positivity}. Thus
\[
h(\Theta Z,\ov Z)\ne0
\]
for every nonzero null $Z$.

Hence $\Theta$ is not scalar and has no nonreal eigenvalue.  Its
minimal polynomial also cannot be $(x-\lambda)^2$: writing
$\Theta=\lambda I+\mathsf N$ with $\mathsf N^2=0$, any $e=\mathsf N
x\ne0$ is null and satisfies $h(\Theta e,\ov e)=0$. Therefore the
minimal polynomial has two distinct real roots.

The corresponding eigenspaces are nondegenerate and orthogonal.
Since $H$ has negative index one, exactly one has negative index
one; it must be one-dimensional, since otherwise it would contain a
nonzero null eigenvector.  Denoting this negative line by $E_-$
gives \eqref{eq:Thetaform}.
\end{proof}

\begin{proposition}\label{prop:nomixed}
The mixed branch (b) of Proposition~\ref{prop:puremixed} is
impossible.
\end{proposition}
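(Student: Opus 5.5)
The plan is to show that the normal form of Lemma~\ref{lem:indexoneTheta} forces the characteristic divisor $\Delta_\kappa$ to split as a sum of a $p_1$-pullback and a $p_2$-pullback. This contradicts the fact that in the mixed branch $\Delta_\kappa$ is an irreducible (prime) divisor of class $H_1+H_2$ (Lemma~\ref{lem:intrinsic} and Proposition~\ref{prop:puremixed}(b)).

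First I will compute the bilinear form $h(\Theta Z,W)$ using \eqref{eq:Thetaform}. Pick a spanning vector $e$ of the negative line $E_-$, so $H(e)<0$. The $H$-orthogonal projection is $\pi_{E_-}Z=\frac{h(Z,\overline e)}{h(e,\overline e)}\,e$, written with the bilinear form $h$ and the Hermitian relation $H(Z)=h(Z,\overline Z)$. Hence
\[
h(\Theta Z,W)=\mu\,h(Z,W)+\frac{\lambda-\mu}{h(e,\overline e)}\,h(Z,\overline e)\,h(e,W).
\]
Restricting to $\cI$, the first term vanishes, and since $\lambda\neq\mu$ we obtain
\[
h(\Theta Z,W)|_{\cI}=c\,\ell_1(Z)\,\ell_2(W),\qquad c\ne0,
\]
where $\ell_1(Z):=h(Z,\overline e)$ and $\ell_2(W):=h(e,W)$ are nonzero linear forms.

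By \eqref{eq:kTheta}, this gives the divisor identity
\[
\Delta_\kappa=p_1^*\{\ell_1=0\}+p_2^*\{\ell_2=0\}
\]
on $\cI$. Here each summand is a nonzero effective divisor: $\{\ell_1=0\}\cap\cI$ is $p_1^{-1}$ of a hyperplane, which is irreducible of dimension $2n-2$, and similarly for $\ell_2$. So $\Delta_\kappa$ has two distinct prime components, one of class $H_1$ and one of class $H_2$. They are distinct because their classes differ, $H_1$ and $H_2$ being independent in $\operatorname{Pic}(\cI)$.

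This contradicts Lemma~\ref{lem:intrinsic}, which says $\Delta_\kappa$ is irreducible. Equivalently, it contradicts Proposition~\ref{prop:puremixed}(b), which says $\Delta_\kappa$ is a mixed prime component, whereas we have just shown it decomposes into pure pullbacks. Hence the mixed branch cannot occur.

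The main point needing care is the bookkeeping between the Hermitian projection and the complex bilinear form $h$. Concretely, I must check that $\pi_{E_-}$ is expressed through $h(\cdot,\overline e)$, so that $h(\pi_{E_-}Z,W)$ genuinely factors as a product of a form in $Z$ alone and a form in $W$ alone. A convenient alternative is to normalize by an element of $U(1,n)$ so that $E_-=\CC(1,0,\dots,0)$. Then $h(\Theta Z,W)|_{\cI}=-(\lambda-\mu)Z_0W_0$, and the reducibility is evident. Such $g\in U(1,n)$ satisfy $h(gZ,\overline gW)=h(Z,W)$, so they preserve $\cI$ and the whole setup.

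As a consistency check, the kernel direction then becomes $v\equiv e_0 \pmod{\CC Z}$. In the affine chart this is the radial direction through $0$, which is exactly the pure-branch picture. It is therefore expected that the would-be mixed divisor degenerates into the pure one plus its conjugate, which is what the argument above produces.
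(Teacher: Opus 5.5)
Your argument is correct and is essentially the paper's proof. You use the normal form of Lemma~\ref{lem:indexoneTheta} to write $h(\Theta Z,W)|_{\cI}$ as a nonzero multiple of $h(Z,\overline e)\,h(e,W)$, so that $\Delta_\kappa$ becomes the union of a $p_1$-pullback and a $p_2$-pullback of hyperplanes, contradicting the irreducibility in Lemma~\ref{lem:intrinsic}; your explicit formula for $\pi_{E_-}$ and the distinctness of the two components via their classes just fill in details the paper leaves implicit.
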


\begin{proof}
By Lemmas~\ref{lem:selfadjoint}--\ref{lem:indexoneTheta}, $\Theta$
is a nonzero scalar multiple of the $H$-orthogonal projection onto a
negative line $E_-=\CC e$ up to $\mu I$.  Modulo the incidence
equation $h=0$, the defining form therefore separates, up to a
nonzero scalar, as
\[
h(\Theta Z,W)=h(Z,\ov e)\,h(e,W).
\]
Its divisor on $\cI$ is the sum of a horizontal and a vertical
divisor.  This contradicts the irreducibility and mixed divisor
class of $\Delta_\kappa$ proved in Lemma \ref{lem:intrinsic} and
Proposition \ref{prop:puremixed}.
\end{proof}

\subsection{The non-characteristic branch}

Assume there is a non-characteristic kernel direction, so that $s=0$ in Proposition~\ref{prop:global}. We will prove $F$ is linear and hence totally geodesic.

\begin{lemma}\label{lem:product}
For every $j\ge1$ and every $R\in\kcone[W]$,
\begin{align}\label{eq:product}
\Dop_\xi(h^jR)
=&-j(j+1)h^{j-1}\ell_v\ell_\xi R\\
&+h^j\bigl((j+1)(\mu_\xi R-\ell_vD_\xi R-\ell_\xi D_vR)-jh(D_\xi v,W)R\bigr)\notag\\
&+h^{j+1}\bigl(D_{A\xi}R-D^2_{\xi,v}R-D_{D_\xi v}R\bigr).\notag
\end{align}
\end{lemma}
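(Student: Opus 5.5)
The plan is a direct Leibniz-rule computation. I would expand each of the five terms of $\Dop_\xi$ in \eqref{eq:operator} on the product $h^jR$, then collect the result by powers of $h$. The only inputs are the elementary derivatives of $h=h(Z,W)$ in the $Z$-variables with $W$ fixed. Since $h$ is linear in $Z$, we have
\[
D_\xi h=\ell_\xi,\qquad D_v h=\ell_v,\qquad D_{A\xi}h=\mu_\xi ,
\]
and every second derivative of $h$ with both directions frozen vanishes.

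\textbf{Convention for the Hessian term.} The second-order term has to be read as the full composite $D_\xi(D_v\,\cdot)$, with $v=v(Z)$ varying. This is the same quantity that appeared in the proof of Lemma~\ref{lem:exactoperator} through $D_\xi(D_vQ)=D^2_{\xi,v}Q+D_{D_\xi v}Q$. Under this reading one also has $D_\xi(\ell_v)=h(D_\xi v,W)$, which is where the term $-jh(D_\xi v,W)R$ comes from. With the Hessian fully frozen, the formula would change only by the additive term
\[
-hD_{D_\xi v}(h^jR)=-jh^j h(D_\xi v,W)R-h^{j+1}D_{D_\xi v}R,
\]
which is exactly the pair of $D_\xi v$-terms in \eqref{eq:product}. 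So I would fix this reading at the start of the proof.

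\textbf{Term-by-term expansion.} With the convention fixed, the five contributions are as follows.
\begin{enumerate}[label=(\roman*)]
\item $\mu_\xi h^jR$.
\item $-\ell_vD_\xi(h^jR)=-jh^{j-1}\ell_v\ell_\xi R-h^j\ell_vD_\xi R$.
\item $-\ell_\xi D_v(h^jR)=-jh^{j-1}\ell_\xi\ell_vR-h^j\ell_\xi D_vR$.
\item $hD_{A\xi}(h^jR)=jh^j\mu_\xi R+h^{j+1}D_{A\xi}R$.
\item The Hessian term. First write $D_v(h^jR)=jh^{j-1}\ell_vR+h^jD_vR$. Differentiating this in $\xi$ gives
\[
j(j-1)h^{j-2}\ell_\xi\ell_vR+jh^{j-1}h(D_\xi v,W)R+jh^{j-1}(\ell_vD_\xi R+\ell_\xi D_vR)+h^j\bigl(D^2_{\xi,v}R+D_{D_\xi v}R\bigr).
\]
This expression is then multiplied by $-h$.
\end{enumerate}

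\textbf{Collecting terms.} Grouping by powers of $h$ gives the three lines of \eqref{eq:product}.
\begin{itemize}
\item The coefficient of $h^{j-1}\ell_v\ell_\xi R$ is $-j-j-j(j-1)=-j(j+1)$.
\item At order $h^j$, each of $\mu_\xi R$, $-\ell_vD_\xi R$ and $-\ell_\xi D_vR$ appears with total coefficient $1+j$. The only remaining $h^j$ term is $-jh(D_\xi v,W)R$.
\item At order $h^{j+1}$, what remains is $D_{A\xi}R-D^2_{\xi,v}R-D_{D_\xi v}R$.
\end{itemize}
For $j=1$ the $h^{j-2}$ factor in (v) is multiplied by $j-1=0$, so no negative powers occur. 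All objects lie in $\kcone[W]$, so the identity holds there.

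\textbf{Main obstacle.} There is no conceptual difficulty. The only real point is the bookkeeping of where $v$ is held frozen and where it is differentiated, since that determines the $D_\xi v$-terms. I would double-check this by specializing to $j=0$, where the formula should collapse to the definition of $\Dop_\xi R$ under the same convention.
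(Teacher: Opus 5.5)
Your proposal is correct and is essentially the paper's own proof. The paper also records $D_\xi h=\ell_\xi$, $D_vh=\ell_v$, $D_{A\xi}h=\mu_\xi$ and expands each term of $\Dop_\xi$ on $h^jR$ by the Leibniz rule. Its expansion of the Hessian term coincides with your item (v), so it tacitly uses the unfrozen composite $D_\xi(D_v\,\cdot)$ that you fix at the outset.

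Your convention remark is apt. With the literally frozen Hessian described after \eqref{eq:operator}, which is the reading under which Lemma~\ref{lem:exactoperator} is proved, the two $D_\xi v$-terms would be absent. However, the discrepancy $-hD_{D_\xi v}(h^jR)$ lies in $h^j\kcone[W]$, so the only application, the congruence \eqref{eq:nc-leading} modulo $h^j$, is unaffected.
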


\begin{proof}
Note that
\[
D_\xi h=\ell_\xi, \qquad D_vh=\ell_v, \qquad D_{A\xi}h=\mu_\xi,
\qquad D^2_{\xi,v}h=0.
\]
Hence a direct computation yields
\begin{equation*}\begin{split}
D_\xi(h^jR)=&jh^{j-1}\ell_\xi R+h^jD_\xi R,\\
D_v(h^jR)=&jh^{j-1}\ell_v
R+h^jD_vR,\\
 D_{A\xi}(h^jR)=&jh^{j-1}\mu_\xi
R+h^jD_{A\xi}R,\\
 D^2_{\xi,v}(h^jR)=&j(j-1)h^{j-2}\ell_\xi\ell_vR
+ jh^{j-1}h(D_\xi v,W)R\\
&+jh^{j-1}(\ell_vD_\xi R 
+\ell_\xi D_vR)
+h^jD^2_{\xi,v}R
+h^jD_{D_\xi v}R.
\end{split}\end{equation*}
Substituting these identities into \eqref{eq:operator} gives
\eqref{eq:product}.
\end{proof}

The following coefficient-space fact will be used at the end of the non-characteristic argument to transfer a common factor of the \(Z\)-coefficients of \(B(P,P^\#)\) to the components of the reduced lift \(P\).

Let $\md(Z)$ be a monomial basis of $\CC[Z]_m$, then there exists a
matrix $C$ such that $P(Z)$ can be written as
\[
P(Z)=C\md(Z), \qquad P^\#(W)=\overline C\,\md(W).
\]
Put $J_N:=\operatorname{diag}(-1,1,\dots,1)$ and $\mathsf G:=C^T
J_N\overline C.$ Then
\[
B(P(Z),P^\#(W))=\md(Z)^T\mathsf G\md(W),
\]
and $\mathsf G$ is the Hermitian coefficient matrix of the polarized
form. 
\begin{lemma}\label{lem:coeffspace}
The restriction of the target Hermitian form to $\im \overline C\subset \CC^{N+1}$ is nondegenerate and has negative index one. Moreover, we have
\begin{equation}\label{eq:rangeeq}
\im(\mathsf G)=\im(C^T).
\end{equation}
\end{lemma}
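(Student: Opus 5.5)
The approach is linear algebra on the coefficient matrix $C$, combined with the positivity of $Q$ on the null cone from Lemma~\ref{lem:positivity}. Write $P(Z)=C\md(Z)$, and let $M$ be the dimension of $\CC[Z]_m$, so that $C$ is an $(N+1)\times M$ matrix.

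\textbf{Step 1: dependence only on the span.} For the nondegeneracy and index claim, the Hermitian form $\langle U,U\rangle:=B(U,\overline U)$ restricted to $V:=\im\overline C$ is what matters. Conjugation is an antilinear bijection from $\im C$ onto $V$, so it suffices to study $\im C=\Span\{P(Z):Z\in\CC^{n+1}\}$. Indeed, the values $P(Z)$ span $\im C$ because the monomials $\md(Z)$ span $\CC^M$ as $Z$ varies.

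\textbf{Step 2: a negative direction.} Take $Z=(1,0,\dots,0)$, so that $P(Z)=(q(0),p(0))$. Since $F(0)\in\B^N$ and $q(0)\neq0$ by Lemma~\ref{lem:nopole}, we get $B(P(Z),\overline{P(Z)})=|q(0)|^2(\|F(0)\|^2-1)<0$. Hence the form on $\im C$ has at least one negative direction. The ambient form $J_N$ has negative index one, so the index on any subspace is at most one. The index is therefore exactly one.

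\textbf{Step 3: nondegeneracy (the main obstacle).} Suppose $U_0\in\im C$ lies in the radical of the restricted form. I would argue as follows.

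1. Because the restricted form has a negative direction, its radical consists of vectors orthogonal to a negative vector. Any vector orthogonal to a negative vector in a form of negative index one on $\CC^{N+1}$ is nonnegative, with equality only for $0$ unless it is null.
2. A null vector orthogonal to a negative vector must in fact be zero, since the orthogonal complement of a negative line is positive definite.
3. Applying this to the radical vector $U_0$, which is orthogonal to the negative vector $P(e_0)$, gives $\langle U_0,U_0\rangle\ge0$ with equality only if $U_0=0$.
4. Since $U_0$ lies in the radical, $\langle U_0,U_0\rangle=0$, and hence $U_0=0$.

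So the restriction is nondegenerate. This argument does not need Lemma~\ref{lem:positivity}; it uses only the Lorentzian signature of the ambient form together with the single negative vector found in Step 2.

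\textbf{Step 4: the range identity.} Note $\mathsf G=C^TJ_N\overline C$, so $\im\mathsf G\subset\im C^T$ is automatic. For the reverse inclusion it suffices to compare ranks, which amounts to showing $\ker\mathsf G=\ker\overline C$.

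If $\mathsf G x=0$, then $J_N\overline Cx$ is annihilated by $C^T$. That is, $u:=\overline Cx\in V$ satisfies $B(u,y)=0$ for all $y\in\im C$. Equivalently, $\langle u,w\rangle=0$ for all $w\in V$, since $\im C=\overline V$. By Step 3 this forces $u=0$, i.e. $x\in\ker\overline C$.

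Hence $\rk\mathsf G=\rk\overline C=\rk C^T$, and with the inclusion this gives $\im\mathsf G=\im C^T$, which is \eqref{eq:rangeeq}.

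The only delicate point is keeping the conjugations and transposes straight in the identification $B(u,y)$ versus $\langle u,\overline y\rangle$. The conceptual obstacle, nondegeneracy, is handled cleanly by the signature argument in Step 3.
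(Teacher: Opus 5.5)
Your proof is correct and follows essentially the same route as the paper: you exhibit a negative vector in the span (you take $Z=e_0$, the paper takes a general point of the negative cone), deduce nondegeneracy from positive definiteness of the orthogonal complement of a negative vector, and obtain the range identity from $\ker\mathsf G=\ker\overline C$ plus a rank count. Passing through $\im C$ and then conjugating is only a cosmetic difference from the paper, which conjugates the negative vector directly into $\im\overline C$.
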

\begin{proof}
Choose $Z$ in the negative cone $\{Z\in \mathbb{C}^{n+1}:\ H(Z)<0\}$
away from the polar locus. Then $Z_0\ne0$ and $z:=Z'/Z_0\in\B^n$.
Writing the affine lift as $P(1,z)=(q(z),p(z))$, we have $q(z)\ne0$ by holomorphicity in
the ball. From the degree-$m$ homogeneity of $P$,
we obtain
$$
P(Z)=Z_0^mP(1,z)=Z_0^m\big(q(z),p(z)\big).
$$
Together with $F(z)\in \BB^N$, it follows that
\[
B(P(Z),P^\#(\overline Z))
=|Z_0|^{2m}\bigl(-|q|^2+\|p(z)\|^2\bigr)=|Z_0|^{2m}|q(z)|^2\bigl(\|F(z)\|^2-1\bigr)<0.
\]
Thus $P(Z)=C\md(Z)$ contains a negative vector, whose complex
conjugate is a negative vector in $\im\overline C$.  In a Hermitian
space of signature $(N,1)$, the orthogonal complement of a negative
vector is positive definite.  Note that $\im\overline C$ contains a
negative vector and the ambient negative index is one; thus its
negative index is exactly one. Hence any vector in $\im\overline
C\cap(\im\overline C)^\perp$ is zero, so the restriction to
$\im\overline C$ is nondegenerate.

Next we show that $\ker\mathsf G=\ker\overline C$. In
fact, if $\ov{C}x=0,$ then $Gx=C^TJ_N\ov{C}x=0$, which means that
$\ker\overline C\subset \ker\mathsf G$.

On the other hand, if $\mathsf Gx=C^T J_N\overline{C}x=0$, then for
any $y\in \mathbb{C}^M$, where $M=\binom{n+m}{m}$, we have
$$
0=y^*Gx=\langle\ov{C}y,\ov{C}x\rangle_J.
$$
Since $\im\overline C=\{\ov{C}y:\ y\in \mathbb{C}^M\}$,
nondegeneracy of the restricted form to $\im\overline C$ gives
$\ov{C}x\in (\im\overline C)^\bot$. Thus $\ov{C}x\in \im\overline
C\cap(\im\overline C)^\bot=\{0\}$, which means $\overline Cx=0$.
Hence $\ker\mathsf G=\ker\overline{C}$. Then $\rk\mathsf
G=\rk \ov C=\rk C$. Since
\[
\im(\mathsf G)=\im(C^T
J_N\ov C)\subset\im(C^T)
\]
and both spaces have this same dimension, (\ref{eq:rangeeq})
follows.
\end{proof}

\begin{proposition}[Linear reduction]\label{prop:linear}
If a non-characteristic kernel direction exists, then $m=1$ and $F$
is totally geodesic.
\end{proposition}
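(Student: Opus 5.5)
We want to show that if $s=0$ for some kernel line field $\kappa$, then $m=1$. The idea is to exploit $\Dop_\xi Q=0$ in an expansion in powers of $h$, and to use the non-characteristic hypothesis to force $\ell_v$-divisibility. Since $s=0$, Proposition~\ref{prop:global} gives $\Div(Q|_{\cI})=p_1^*D_1+p_2^*D_2$. Moreover $Q^\#=Q$, so the two pieces are exchanged by the involution.

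\textbf{Step 1: factor $Q$ modulo $h$.} First I would upgrade the divisor statement to a factorization modulo $h$. Using the restriction sequences for $\cO(a,b)$ on $\cI$, as in the proof of Proposition~\ref{prop:puremixed}, I expect to obtain
\[
Q=\phi(Z)\,\phi^\#(W)+h\,Q_1
\]
for some polynomial $\phi$ of degree $m-1$ and some $Q_1$ of bidegree $(m-2,m-2)$. Iterating the same argument on $Q_1$ (using Lemma~\ref{lem:product} to track the operator) should give an $h$-adic expansion
\[
Q=\sum_j h^j R_j .
\]

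\textbf{Step 2: peel off the lowest-order terms.} Next I would feed this expansion into $\Dop_\xi Q=0$ and read off coefficients. On $\cI$ with $h(\xi,W)=0$, the relation reduces to $\mu_\xi Q=\ell_v D_\xi Q$. Since $\ell_v\nmid Q|_{\cI}$ (this is exactly $s=0$), the proof of Proposition~\ref{prop:log} with $s=0$ gives $A=\lambda I+Z\otimes a+v\otimes b$. Substituting into \eqref{eq:canonicalA} yields
\[
D^2P(v,\xi)=\lambda DP(\xi)+a(\xi)\,mP+b(\xi)\,DP(v),
\]
which is a strong second-order ODE along $v$. Combining this with the $h^1$ coefficient from Lemma~\ref{lem:product} then forces the $W$-linear factors of $\phi^\#$ to be annihilated by $D_v$. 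The key consequence should be that $\phi(Z)$ is constant along the kernel direction, and similarly for the higher $R_j$.

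\textbf{Step 3: transfer to $P$ and conclude $m=1$.} I would then show that $Q$ has a common $Z$-factor which divides all $Z$-coefficients of $B(P,P^\#)=hQ$. Lemma~\ref{lem:coeffspace} gives $\im\mathsf G=\im C^T$, so this common factor of the coefficient rows transfers to every component of $P$. Since the lift $P$ is reduced, the factor must be constant; hence $\deg Q=0$, that is $m=1$. A degree-one reduced lift gives a linear fractional $F$, and a proper linear fractional map between balls is totally geodesic.

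The main obstacle is Step 2: showing that the non-characteristic relation actually propagates to the full $h$-adic expansion, rather than only to $Q|_{\cI}$, and extracting from it a genuine common factor. I would first try an induction on $j$ using the leading term $-j(j+1)h^{j-1}\ell_v\ell_\xi R$ in Lemma~\ref{lem:product}. The aim is to show that $\ell_v\ell_\xi R_j$ must be cancelled by lower-order terms, which forces $R_j\equiv 0$ modulo $h$ for $j\ge1$ unless $m=1$.
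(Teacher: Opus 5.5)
\textbf{There is a genuine gap: the central step is missing.} Your two outer steps are the paper's. Step~1 is how its proof starts: $s=0$ and Proposition~\ref{prop:global} give $Q|_{\cI}=\phi\phi^\#$. Step~3 is how it ends: once $Q=\phi\phi^\#$ holds identically, $\phi(Z)$ divides every $Z$-coefficient of $B(P,P^\#)=h\phi\phi^\#$, hence every $P_j$ by \eqref{eq:rangeeq}, and reducedness forces $m-1=\deg\phi=0$.

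What is missing is the upgrade from $Q\equiv\phi\phi^\#\pmod h$ to the exact identity $Q=\phi\phi^\#$, and your Step~2 does not deliver it.
\begin{itemize}
\item The relations you actually derive there live on $\cI$, so they cannot see the term $hQ_1$ that must be shown to vanish. With $s=0$ and $Q|_{\cI}=\phi\phi^\#$, the residual equation \eqref{eq:log} just reads $D_\xi\phi=b_Z(\xi)\phi$; it identifies $b_Z$ with $d\log\phi$ and says nothing more.
\item The second-order relation for $D^2P(v,\xi)$ is correct but is never used.
\item The claim that $D_v$ annihilates the $W$-linear factors of $\phi^\#$ is vacuous, since every derivative in $\Dop_\xi$ is a $Z$-derivative.
\item Even if $\phi$ were constant along the kernel direction, that would not produce a common factor of $Q$.
\item ``Iterating on $Q_1$'' has no basis. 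Divisor separation was proved for $Q$ using $\Dop_\xi Q=0$, an identity $Q_1$ does not satisfy.
\end{itemize}

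Your closing remark names the right tool, the leading term of Lemma~\ref{lem:product}. What is needed, though, is a single divisibility argument, not an $h$-adic induction.
\begin{enumerate}
\item If $Q\ne\phi\phi^\#$, write $Q=\phi\phi^\#+h^jR$ with $j\ge1$ and $h\nmid R$.
\item $\Dop_\xi$ only differentiates in $Z$ and multiplies by forms linear in $W$. Hence $\Dop_\xi(\phi\phi^\#)=\phi^\#N_\xi$ for some $N_\xi\in\kcone[W]$.
\item $\Dop_\xi Q=0$ and Lemma~\ref{lem:product} then give $\phi^\#N_\xi\equiv j(j+1)h^{j-1}\ell_v\ell_\xi R\pmod{h^j}$.
\item Since $h$ is prime in $\kcone[W]$ and $h\nmid\phi^\#$, one gets $N_\xi=h^{j-1}M_\xi$, and so $j(j+1)\ell_v\ell_\xi R\equiv\phi^\#M_\xi\pmod h$.
\item Non-characteristicity enters only here: in the UFD $\kcone[W]/(h)$ it means precisely $\ell_v\nmid\phi^\#$. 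For generic constant $\xi$, $\ell_\xi$ is also coprime to $\phi^\#$.
\item Therefore $\phi^\#$ divides $R$ modulo $h$, with $R\not\equiv0$. This is impossible, because $\deg_W\phi^\#=m-1>m-1-j=\deg_W R$.
\end{enumerate}
So the term $-j(j+1)h^{j-1}\ell_v\ell_\xi R$ is not ``cancelled by lower-order terms.'' The only term that could balance it is a multiple of $\phi^\#$, and coprimality together with this degree count rules that out.
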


\begin{proof}
If $m=1$ there is nothing to prove, so assume $m>1$.

Since $s=0$, Proposition~\ref{prop:global} gives
\[
\Div(Q|_{\cI})=p_1^*D_1+p_2^*D_2.
\]
Since $Q|_{\cI}$ has bidegree $(m-1,m-1)$, there are homogeneous polynomials
$\alpha\in\CC[Z]_{m-1}$ and $\beta\in\CC[W]_{m-1}$ such that
\[
Q|_{\cI}=c\,\alpha(Z)\beta(W)
\]
for some $c\in\CC^*$.  Hermitian symmetry $Q^\#=Q$ identifies the two factors,
and Lemma~\ref{lem:positivity} makes the remaining scalar positive.  After rescaling
$\alpha$, we therefore have
\begin{equation}\label{eq:nc-separation}
Q|_{\cI}=\alpha(Z)\alpha^\#(W).
\end{equation}
Then
\[
h\mid (Q-\alpha\alpha^\#).
\]
  We claim that $Q=\alpha\alpha^\#$.  Suppose otherwise and write it as
\[
Q=\alpha\alpha^\#+h^jR,\qquad j\ge1,\qquad h\nmid R.
\]
 Since every $Z$-derivative in $\Dop_\xi$ leaves the
factor $\alpha^\#(W)$ unchanged, there exists $N_{\xi}\in \kcone[W]$ such that
\[
\Dop_\xi (\alpha\alpha^\#)=\alpha^\#N_\xi.
\]
By Lemma~\ref{lem:product}, we obtain
\begin{equation}\label{eq:nc-leading}
\Dop_\xi(h^jR)\equiv-j(j+1)h^{j-1}\ell_v\ell_\xi R\pmod{h^j}.
\end{equation}
Since $\Dop_\xi Q=0$ by Lemma~\ref{lem:exactoperator}, equations
\eqref{eq:nc-leading} and $Q=\alpha\alpha^\#+h^jR$ give
\[
\alpha^\#N_\xi\equiv j(j+1)h^{j-1}\ell_v\ell_\xi R\pmod{h^j}.
\]
Over $\kcone[W]$ the polynomial $h$ is linear and hence prime, while
$h\nmid\alpha^\#$ because $\alpha^\#$ is independent of $Z$.  Thus
$h^{j-1}\mid N_\xi$; write $N_\xi=h^{j-1}M_\xi$.  Dividing by
$h^{j-1}$ and reducing modulo $h$ yields
\begin{equation}\label{eq:nc1}
j(j+1)\ell_v\ell_\xi R\equiv\alpha^\#M_\xi\pmod h.
\end{equation}

Now $\kcone[W]/(h)$ is a polynomial ring over the field $\kcone$, hence a UFD.
Non-characteristicity means precisely that $\ell_v\nmid\alpha^\#$ in the ring $\kcone[W]/(h)$.  Choose the constant vector $\xi$ generically so
that $\ell_\xi$ is also coprime to $\alpha^\#$ modulo $h$ and neither
$\ell_v$ nor $\ell_\xi$ vanishes modulo $h$.  

If $M_\xi=0$, then
\eqref{eq:nc1} gives $R\equiv0\pmod h$, contradicting $h\nmid R$.  
Hence 
$M_\xi\ne0$. Then
\eqref{eq:nc1} gives $\alpha^\#\mid R$ in the ring $\kcone[W]/(h)$, which is impossible because
\[
\deg_W\alpha^\#=m-1>m-1-j=\deg_WR.
\]
Hence $Q=\alpha\alpha^\#$, and therefore
\begin{equation}\label{eq:nc-kernelidentity}
B(P,P^\#)=h\,\alpha\alpha^\#.
\end{equation}

Now we use \eqref{eq:rangeeq} and \eqref{eq:nc-kernelidentity} to show that
\begin{align}\label{eq:nonred}
\alpha(Z)\mid P_j(Z)\qquad \text{for }j=0,1,\ldots,N.
\end{align}
Write
\[
B(P(Z),P^\#(W))=\sum_{|\beta|=m}R_\beta(Z)W^\beta,
\]
where each $R_\beta(Z)$ is homogeneous of degree $m$ in $Z$, and set
\[
\mathcal R_Z\bigl(B(P,P^\#)\bigr)
:=\operatorname{Span}_{\CC}\{R_\beta(Z):|\beta|=m\}
\subset \CC[Z]_m.
\]
Under the coefficient-vector identification determined by the monomial basis
$\md(Z)$, the coefficient vector of $R_\beta(Z)$ is the $\beta$-th column of
$\mathsf G$, while the coefficient vector of $P_j$ is the $j$-th row of $C$,
equivalently the $j$-th column of $C^T$.  Hence \eqref{eq:rangeeq} gives
\begin{equation}\label{eq:coefficient-span}
\mathcal R_Z\bigl(B(P,P^\#)\bigr)
=\operatorname{Span}_{\CC}\{P_0,\ldots,P_N\}.
\end{equation}
By \eqref{eq:nc-kernelidentity}, every coefficient $R_\beta(Z)$ is divisible by
$\alpha(Z)$.  Therefore \eqref{eq:coefficient-span} implies that every component
$P_j$ is divisible by $\alpha$, proving \eqref{eq:nonred}.  Since $P$ is reduced,
$\alpha$ must be constant.  But $\deg\alpha=m-1>0$, a contradiction.  Hence
$m=1$, and the map is totally geodesic.
\end{proof}

\subsection{The pure characteristic branch}

Henceforth assume $F$ is nonlinear.  Proposition~\ref{prop:linear}
removes every non-characteristic kernel direction.
Proposition~\ref{prop:rge2} therefore gives $r=1$, and
Proposition~\ref{prop:nomixed} leaves only the pure characteristic
branch.

Let
\[
\Delta_\kappa=p_2^{-1}(\Pi_L), \qquad \Pi_L=\{L(W)=0\}.
\]
Choose $e\in\CC^{n+1}$ such that $L(W)=h(e,W)$.

\begin{lemma}\label{lem:constantv}
After shrinking to a nonempty Zariski open set and rescaling the
kernel line field, the pure characteristic direction admits the
constant cone representative
\[
v(Z)\equiv e\pmod{\CC Z}.
\]
\end{lemma}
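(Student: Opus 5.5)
The plan is to compare the two descriptions of the characteristic divisor on the incidence fibers. By definition, $\Delta_\kappa^\circ$ is cut out over $U$ by $h(v(Z),W)=0$. In the pure branch we also have $\Delta_\kappa=p_2^{-1}(\Pi_L)=\{h(e,W)=0\}\cap\cI$. Fix a general $[Z]\in U$. The two linear forms $h(v(Z),W)|_{H_Z}$ and $h(e,W)|_{H_Z}$ then define the same hyperplane in the fiber $p_1^{-1}([Z])=\PP(H_Z)$, where $H_Z=\{W:h(Z,W)=0\}$. Both forms are nonzero on $H_Z$: the first because $v\notin\CC Z$, the second because $\Delta_\kappa$ is a divisor and not all of $\cI$. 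Hence they are proportional, and there is $c(Z)\in\CC^*$ with
\[
h(v(Z)-c(Z)e,W)=0\qquad\text{for all }W\in H_Z .
\]
Since $H_Z^{\perp_h}=\CC Z$ by nondegeneracy of $h$, this yields $v(Z)-c(Z)e\in\CC Z$. So $v\equiv c(Z)e\pmod{\CC Z}$. In particular $e\notin\CC Z$ for general $Z$, so $e$ itself is a legitimate cone representative of a nonzero tangent direction.

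The next step is rationality of $c$. Choose a constant $W_*$ such that $h(e,W_*)\neq0$, and evaluate the identity on the vector $W=W_*-\frac{h(Z,W_*)}{h(Z,W')}W'$, which lies in $H_Z$ for a second fixed vector $W'$. This expresses $c(Z)$ as a quotient of rational functions of $Z$, so $c\in\kcone$ and $c\not\equiv0$. One could also argue that $c$ is determined uniquely by linear equations with coefficients in $\kcone$. I then shrink $U$ to the Zariski open set where $c$ is regular and nonvanishing and replace $v$ by $c^{-1}v$. This is exactly a rescaling of the line field representative, of the type $v'=fv+gZ$ permitted in Lemma~\ref{lem:intrinsic}. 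It leaves the line field $\kappa$, $\Delta_\kappa$ and $s$ unchanged, and gives $v\equiv e\pmod{\CC Z}$. After subtracting the multiple of $Z$ we may take $v\equiv e$.

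The only delicate point is whether the previously constructed data $A$ and $b_Z$ must be redone for the new representative. Lemma~\ref{lem:canonicalA} and Proposition~\ref{prop:log} hold for any rational cone representative, so they can simply be reapplied with $v=e$. The statement itself only needs the identification of hyperplanes, together with the fact that $\Delta_\kappa$ does not depend on the choice of representative.
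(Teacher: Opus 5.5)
Your proposal is correct and follows essentially the same route as the paper. You identify the characteristic hyperplanes $\{h(v(Z),W)=0\}$ and $\{h(e,W)=0\}$ in the incidence fiber, use nondegeneracy of $h$ to write $v(Z)=c(Z)e+g(Z)Z$, then rescale by $c^{-1}$ and drop the radial term; your explicit checks that both restricted forms are nonzero and that $c$ is rational and generically nonvanishing, which the paper leaves implicit, are welcome additions.
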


\begin{proof}
For general $Z$, the characteristic hyperplane cut out on the
incidence fiber by $h(v(Z),W)$ is the fixed hyperplane cut out by
$L(W)=h(e,W)$.  Thus the restrictions of the two linear forms to
$\{W:h(Z,W)=0\}$ are proportional.  Nondegeneracy of $h$ gives
\[
v(Z)=c(Z)e+g(Z)Z
\]
with $c(Z)\ne0$ generically.  Replacing the generator by
$c(Z)^{-1}v(Z)$ and then changing it by the radial term
$-c(Z)^{-1}g(Z)Z$ leaves the same projective tangent direction and
gives the constant representative $e$.
\end{proof}

\begin{lemma}\label{lem:center}
One has $H(e)<0$.  After a source ball automorphism and a nonzero
scalar rescaling of the kernel representative, one may take
\[
e=e_0:=(1,0,\ldots,0).
\]
\end{lemma}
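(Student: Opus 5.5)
The plan is to extract the sign of $H(e)$ from the strict positivity of $Q$ on the real null diagonal (Lemma~\ref{lem:positivity}), and then use the transitivity of $\operatorname{Aut}(\B^n)=PU(n,1)$ on negative lines to normalize $e$.

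\textbf{Step 1: $H(e)<0$.} We are in the pure characteristic branch, so $s=\ord_{\Delta_\kappa}(Q|_{\cI})\ge1$ and $\Delta_\kappa=p_2^{-1}(\Pi_L)$ with $L(W)=h(e,W)$. Since $\Delta_\kappa$ is a component of $\Div(Q|_{\cI})$, the function $Q$ vanishes at every point of $\cI$ lying over $\Pi_L$. In particular, $Q(Z,\overline Z)=0$ for every nonzero $Z$ with
\[
H(Z)=h(Z,\overline Z)=0 \quad\text{and}\quad h(e,\overline Z)=0 .
\]
Note that $h(e,\overline Z)$ is exactly the Hermitian pairing of $e$ with $Z$ induced by $H$. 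Lemma~\ref{lem:positivity} says $Q(Z,\overline Z)>0$ for every such nonzero null $Z$. So it suffices to show that if $H(e)\ge0$, there is a nonzero null vector $Z$ that is $H$-orthogonal to $e$. We split into two cases.
\begin{itemize}
\item If $H(e)=0$, take $Z=e$. Then $h(e,\overline e)=H(e)=0$, so $e$ itself is such a vector.
\item If $H(e)>0$, the $H$-orthogonal complement $e^{\perp}$ is nondegenerate of dimension $n$ and has signature $(n-1,1)$. Since $n\ge2$, $e^{\perp}$ contains both a positive and a negative vector, so it contains a nonzero null vector $Z$.
\end{itemize}
In either case we contradict Lemma~\ref{lem:positivity}, hence $H(e)<0$.

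\textbf{Step 2: normalizing $e$.} The group $U(n,1)$ of $H$-isometries acts transitively on negative lines in $\CC^{n+1}$, and its projectivization is $\operatorname{Aut}(\B^n)$. So we can choose $U\in U(n,1)$ with $Ue_0\in\CC^* e$, and replace $F$ by $F\circ\sigma$, where $\sigma$ is the automorphism induced by $U$. We then check that the whole setup transforms naturally.
\begin{itemize}
\item The new homogeneous lift $P\circ U$ is again reduced of degree $m$.
\item Since $h(UZ,\overline U W)=h(Z,W)$, the polarized identity \eqref{eq:sphere} holds for $P\circ U$ with multiplier $Q'(Z,W)=Q(UZ,\overline U W)$.
\item The incidence variety $\cI$ is preserved by $(Z,W)\mapsto(UZ,\overline U W)$.
\item The kernel line field pulls back by $U^{-1}$, with cone representative $U^{-1}v(UZ)\equiv U^{-1}e \pmod{\CC Z}$, which is a nonzero multiple of $e_0$.
\item The characteristic divisor, its multiplicity $s$, and the pure-branch structure are carried along. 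The new hyperplane is $\{h(U^{-1}e,W)=0\}$.
\end{itemize}
Finally, rescaling the kernel representative by a nonzero constant, as permitted in Lemma~\ref{lem:constantv}, gives $e=e_0$.

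\textbf{Main obstacle.} The only delicate point is justifying that $Q$ vanishes at all real points $(Z,\overline Z)$ over $\Pi_L$, not merely generically on $\Delta_\kappa$. This follows because $\Delta_\kappa$ is the whole irreducible divisor $p_2^{-1}(\Pi_L)$ and $Q|_{\cI}$ vanishes along it to order $s\ge1$, hence on its entire support by continuity.
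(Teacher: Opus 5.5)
Your proof is correct and follows the paper's argument essentially step for step: the same case split $H(e)=0$ (take the null vector to be $e$ itself) versus $H(e)>0$ (a nonzero null vector in the signature-$(n-1,1)$ complement $e^{\perp_H}$), the same contradiction between $([\omega],[\overline\omega])\in\Delta_\kappa\subset\Div(Q|_{\cI})$ and Lemma~\ref{lem:positivity}, and the same use of the transitivity of $U(n,1)$ on negative lines. Your explicit check that the lift, the multiplier $Q(UZ,\overline U W)$, the incidence variety and the kernel representative transform equivariantly under $U$ is a welcome addition; the paper leaves this implicit.
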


\begin{proof}
Suppose to the contrary that $H(e)\ge0$. We first prove there is a nonzero $\omega\in\CC^{n+1}$ such that 
\[
H(\omega)=0,\qquad h(e,\overline{\omega})=0.
\]
If $H(e)=0$, take $\omega=e$.
If $H(e)>0$, the Hermitian orthogonal complement
\[
e^{\perp_H}:=\{Z\in\CC^{n+1}:h(e,\overline Z)=0\}
\]
has signature $(n-1,1)$, and hence contains a nonzero null vector
$\omega$.  

Therefore $([\omega],[\overline{\omega}])\in\cI$ and
$L(\overline{\omega})=h(e,\overline{\omega})=0$. Therefore by definition
 \[([\omega],[\overline{\omega}])\in\Delta_\kappa
 \]
 Since $s=\ord_{\Delta_\kappa}(Q|_{\cI})>0$, we have
$\Delta_\kappa\subset\Div(Q|_{\cI})$.  It follows that
\[
Q(\omega,\overline{\omega})=0,
\]
contradicting Lemma~\ref{lem:positivity}.  Thus $H(e)<0$.

The group $U(n,1)$ acts transitively on negative lines.  Hence a
source ball automorphism sends $[e]$ to $[e_0]$; rescaling the
chosen generator of the kernel line field then gives $e=e_0$.
\end{proof}

\begin{lemma}\label{lem:pureradial}
After the normalization of Lemma~\ref{lem:center} and a target ball
automorphism sending $F(0)$ to $0$, the image under $F$ of a general
complex radial disc is contained in a complex line through the
origin.  More precisely, for general
$\zeta\in\partial\mathbb B^n$ there is a unit vector
$u_\zeta\in\partial\mathbb B^N$ such that
\[
F(t\zeta)\in\CC u_\zeta,\qquad |t|<1.
\]
\end{lemma}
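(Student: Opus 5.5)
We prove the lemma by integrating the kernel line field explicitly and then using the Griffiths--Harris theorem on its integral curves.

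\emph{Integral curves.} By Lemma~\ref{lem:constantv} and Lemma~\ref{lem:center}, the Gauss kernel line field $\kappa$ is represented on a nonempty Zariski open set by the constant vector $e_0$ modulo $\CC Z$. In the affine chart $Z=(1,z)$, the tangent direction represented by $e_0$ at $[1:z]$ is obtained by projecting $e_0$ onto $T\CC^n$ along the radial vector $(1,z)$. This gives $e_0-(1,z)\equiv(0,-z)$, so the direction is the radial vector $z$. Hence the integral curves of $\kappa$ in the ball are the punctured complex radial lines $t\mapsto t\zeta$. The same conclusion follows from the cone picture: the integral curves of $Z\mapsto e_0$ are $Z+\tau e_0$, whose projectivizations are $[1+\tau:\zeta]$, i.e.\ the lines through $[e_0]$.

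\emph{Constancy of the Gauss map.} As in the proof of Lemma~\ref{lem:kernelfiber}, $\Gamma_{\rm proj}$ is constant along each such integral curve. So for general $\zeta$ the curve $t\mapsto[P(1,t\zeta)]=[1:F(t\zeta)]$ lies in a single fiber of $\Gamma_X$. The Gauss defect is $r=1$, so by Theorem~\ref{thm:GH} the closure of a general fiber is a projective line $\ell_\zeta\subset\PP^N$. Since general radial lines meet the open set $U$ of that theorem, we get, by analytic continuation in $t$ on the disc $|t|<1$ (where $F$ is holomorphic), that $[1:F(t\zeta)]\in\ell_\zeta$ for all $|t|<1$.

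\emph{The line passes through the origin.} The line $\ell_\zeta$ contains $[1:F(0)]$, which is the point $t=0$. After a target automorphism with $F(0)=0$, this point is $[1:0]$. The automorphism is a projective linear map preserving lines, and the Gauss fibers transform compatibly, so the line structure survives the normalization. Thus $\ell_\zeta$ is a projective line through $[1:0:\dots:0]$. Its affine part is therefore a complex line $\CC u_\zeta$ through $0$, with $\|u_\zeta\|=1$. This gives $F(t\zeta)\in\CC u_\zeta$.

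\emph{Main obstacle: degenerate configurations.} The difficult step is making sure the constructions are not degenerate for general $\zeta$. First, the affine part of $\ell_\zeta$ must be an honest line: $\ell_\zeta$ should not lie in the hyperplane at infinity $\{U_0=0\}$. This holds because $\ell_\zeta$ contains $[1:0]$. Second, the curve $t\mapsto F(t\zeta)$ must be nonconstant, so that it actually sweeps out $\ell_\zeta$. This follows from the immersivity in Lemma~\ref{rank}, since $F$ is generically immersive. Third, the point $t=0$ must be covered. If the origin lies in the bad locus where $\Gamma_{\rm proj}$ is not regular, we first work on the punctured disc $0<|t|<1$, where the curve is contained in $\ell_\zeta$. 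Closedness of $\ell_\zeta$ then gives $F(0)\in\ell_\zeta$ by continuity.

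Finally, the general $\zeta$ are those for which the radial line meets the Zariski open set where all the rational data are regular and Theorem~\ref{thm:GH} applies. This is a nonempty Zariski-open condition on $[\zeta]$, hence it holds for general $\zeta\in\partial\B^n$.
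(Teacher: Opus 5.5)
Your proposal is correct and follows essentially the same route as the paper: you identify the constant kernel representative $e_0$ with the radial direction in the chart $Z=(1,z)$, which makes $\Gamma_{\rm proj}$ constant along general radial lines. You then apply Theorem~\ref{thm:GH} with defect one to place $[1:F(t\zeta)]$ in a projective line, and use closure at $t=0$ together with $F(0)=0$ to see that this line passes through the target origin. Your remark about nonconstancy of the curve is harmless but not needed, since only containment in the line is used.
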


\begin{proof}
Source and target ball automorphisms are induced by projective linear
transformations preserving the corresponding Hermitian forms.  Thus
they preserve the generic projective Gauss defect and the degree of a
reduced homogeneous lift.  We therefore continue to write $F$ and
$P$ for the normalized map and its reduced homogeneous lift.

For a general nonzero $z\in\CC^n$, consider the projective line
\[
\ell_z:=\{[1:tz]:t\in\CC\}\cup\{[0:z]\}\subset\PP^n.
\]
At its general points, $\ell_z$ is an integral curve of the
Gauss-kernel field $e_0$.  Indeed, for $Z_t=(1,tz)$ with $t\ne0$,
\[
\dot Z_t=(0,z),\qquad e_0-Z_t=-t\dot Z_t,
\]
so the classes of $\dot Z_t$ and $e_0$ span the same line in
$T_{[Z_t]}\PP^n\simeq\CC^{n+1}/\CC Z_t$.  By
Lemma~\ref{lem:constantv}, this is exactly the Gauss-kernel direction.
Hence, at every general point of $\ell_z$,
\[
\frac{d}{dt}\Gamma_{\rm proj}([1:tz])=0.
\]
The restriction of the rational map $\Gamma_{\rm proj}$ to
$\ell_z$ is therefore constant on a nonempty open subset.

On the open set where $[P]$ is immersive and its image lies in
$X_{\rm reg}$, equation~\eqref{eq:Gamma-factor} gives
$\Gamma_{\rm proj}=\Gamma_X\circ[P]$.  Since the Gauss defect is one,
a general fiber of $\Gamma_X$ has dimension one.  Theorem~\ref{thm:GH}
then shows that the closure of such a fiber is a projective line,
which we denote by $\Lambda_z\subset\PP^N$.  Thus
\[
[P(1,tz)]\in\Lambda_z
\]
for $t$ in a nonempty open subset, and hence, by closure, wherever the
restriction is defined.  In particular, $[P(1,0)]\in\Lambda_z$.
Because the target normalization gives $F(0)=0$, the point
$[P(1,0)]$ is the target origin in projective coordinates.  Therefore
the affine part of $\Lambda_z$ is a complex line through $0$.
Taking $z=\zeta$ in a general boundary direction and normalizing a
direction vector of that line to have norm one gives the stated
vector $u_\zeta$.
\end{proof}

\begin{proposition}[Pure-branch reduction]\label{prop:terminal}
In the pure characteristic branch, after source and target ball
automorphisms and zero padding,
\[
F=V_m\oplus0.
\]
\end{proposition}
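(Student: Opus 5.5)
We work in the normalization of Lemmas~\ref{lem:center} and~\ref{lem:pureradial}: $e=e_0$, $F(0)=0$, and every general complex radial disc $t\mapsto t\zeta$ is mapped into a complex line $\CC u_\zeta$ through $0$. The plan has three steps: show that $\|F\|$ is radial, deduce that $F$ is homogeneous, and then identify $F$ with a unitary copy of $V_m$.

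\textbf{Step 1: radial rigidity.} For general $\zeta\in\partial\B^n$ write $F(t\zeta)=\phi_\zeta(t)u_\zeta$. Here $\phi_\zeta$ is a holomorphic self-map of the unit disc. It is proper, since $F$ is proper and extends across the boundary, and it satisfies $\phi_\zeta(0)=0$. Hence $\phi_\zeta$ is a finite Blaschke product vanishing at $0$. We claim that
\[
\|F(z)\|^2=\Phi(\|z\|^2).
\]
To see this, use the polarized identity \eqref{eq:sphere} in the pure branch. There $Q|_{\cI}$ contains the factor $L(W)^s=h(e_0,W)^s=(-W_0)^s$. By Proposition~\ref{prop:global} and $Q^\#=Q$, this gives
\[
\Div(Q|_{\cI})=s\,p_2^*\{W_0=0\}+s\,p_1^*\{Z_0=0\}+p_1^*D_1'+p_2^*D_2'.
\]
Run the argument of Proposition~\ref{prop:linear} on the residual factor, with $\Dop_\xi$ and $v=e_0$. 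Non-characteristicity of the residual against $\ell_v=-W_0$ should force the residual horizontal and vertical parts to be absent modulo $h$, and then the $h^j$-corrections to vanish as in \eqref{eq:nc-leading}. The expected outcome is
\[
Q=c\,Z_0^{m-1}W_0^{m-1}+h\cdot(\text{terms that the operator kills}).
\]

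\textbf{Step 2: homogeneity.} The cleanest target is
\[
Q(Z,W)=(Z_0W_0)^{m-1}.
\]
At most a sum of the form $\sum_k c_k(Z_0W_0)^{m-1-k}h^k$ can appear. Feed this into $\Dop_\xi Q=0$ with $v=e_0$ and $A$ from Lemma~\ref{lem:canonicalA}; Lemma~\ref{lem:product} should show that all $c_k$ with $k\ge1$ vanish. Dehomogenizing then gives
\[
\|p\|^2-|q|^2=(\|z\|^2-1)|q|^{0}\cdot 1,
\]
after arranging that $q$ is constant. To justify $q$ constant: the reduced lift satisfies $P_0\mid Z_0^{m}$-type constraints from $B(P,P^\#)=h\,(Z_0W_0)^{m-1}$. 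Compare the coefficient spans using Lemma~\ref{lem:coeffspace}, as in \eqref{eq:coefficient-span}. The span of the $Z$-coefficients of $h\,(Z_0W_0)^{m-1}$ is spanned by $Z_0^{m}$ and $Z_0^{m-1}Z_j$. This forces every $P_j$ into $\Span\{Z_0^m,Z_0^{m-1}Z_1,\dots,Z_0^{m-1}Z_n\}$, which would be degree $1$ after reduction.

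That outcome is inconsistent with $m>1$, so the correct bookkeeping must instead put the factor $(Z_0W_0)^{m-1}$ on the side of $P_0$ only. Concretely, the target identity is
\[
-|Z_0|^{2m}+\|P'(Z)\|^2=\bigl(-|Z_0|^2+\|Z'\|^2\bigr)\cdot\Sigma,
\]
with $\Sigma=\sum_{k}|Z_0|^{2(m-1-k)}\|Z'\|^{2k}$. This corresponds to $q\equiv1$ and $\|p(z)\|^2=\|z\|^{2m}$. I would aim to prove directly that $q$ is constant, using $[P(1,0)]=$ origin and the radial line structure. Since $\phi_\zeta$ is a Blaschke product with all zeros at the single point $0$ (by the divisor $p_1^*\{Z_0=0\}$ carrying all of $Q$'s horizontal part), we get $\phi_\zeta(t)=t^{m}$ up to rotation. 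Hence
\[
F(t\zeta)=t^m u_\zeta',
\]
so $F$ is a homogeneous polynomial of degree $m$ with $\|F(z)\|=\|z\|^m$.

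\textbf{Step 3: conclusion.} A homogeneous polynomial map with $\|p(z)\|^2=\|z\|^{2m}=\|V_m(z)\|^2$ satisfies, by the standard D'Angelo lemma on equal Hermitian norms,
\[
p=U(V_m\oplus0)
\]
for a unitary $U$. Composing with $U^{-1}$ gives the claim, and $M_m\le N$ follows.

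The main obstacle is Step~2: showing that the Blaschke factors have all their zeros at $0$, equivalently that $q$ is constant. I would try the divisor bookkeeping first, namely that the whole horizontal divisor of $Q|_{\cI}$ is $(m-1)p_1^*\{Z_0=0\}$ with $s=m-1$, using Lemma~\ref{lem:positivity} to exclude the other components.
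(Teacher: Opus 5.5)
\textbf{There is a genuine gap.} The one step that carries the proof is showing that the finite Blaschke product $\phi_\zeta$ has no zeros in the disc other than $t=0$, and determining the order of that zero. You justify this only through divisor bookkeeping on $Q|_{\cI}$ that you never carry out.

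The intermediate claims of that route are false. For $V_m$ itself,
\[
Q=\sum_{k=0}^{m-1}(Z_0W_0)^{m-1-k}(Z'\cdot W')^k ,
\]
so $Q|_{\cI}=m(Z_0W_0)^{m-1}$. However, writing $Z'\cdot W'=Z_0W_0+h$ gives an $h$-coefficient $c_1=m(m-1)/2\neq 0$ for $m\ge2$. So ``$\Dop_\xi Q=0$ forces all $c_k=0$ for $k\ge1$'' cannot hold, because the Veronese map satisfies the operator identity. And $Q=(Z_0W_0)^{m-1}$ is, as you noticed, incompatible with reducedness.

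Two further problems with the divisor route:
\begin{itemize}
\item Even granting that the horizontal part of $\Div(Q|_{\cI})$ is exactly $(m-1)p_1^*\{Z_0=0\}$, you have not shown it. Lemma~\ref{lem:positivity} only prevents extra components $p_1^*D_1'$ from meeting the null cone. It does not exclude hypersurfaces lying outside $\overline{\B^n}$.
\item You give no mechanism linking that divisor to the zeros of $p$ along radial discs, which is what the zeros of $\phi_\zeta$ actually are.
\end{itemize}
Likewise, the claim $\|F(z)\|^2=\Phi(\|z\|^2)$ in Step~1 rests on ``should force'' and is never established. The exponent $m$ in $\phi_\zeta(t)=t^m$ is asserted rather than derived.

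\textbf{The missing idea is elementary and needs none of the operator machinery.}
\begin{enumerate}
\item By properness, $F^{-1}(0)$ is a compact analytic subset of $\B^n$, hence finite. So for general $\zeta\in\partial\B^n$ the radial disc meets it only at $0$.
\item Let $r$ be the vanishing order of $F$ at $0$ and $F_r$ its lowest homogeneous part. Then $F_r(\zeta)\neq0$ for general $\zeta$, so $\phi_\zeta$ has a single zero, at $0$, of multiplicity exactly $r$. A finite Blaschke product with this zero set is $c_\zeta t^r$ with $|c_\zeta|=1$.
\item Hence $F(t\zeta)=t^rF(\zeta)$ for general $\zeta$. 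Comparing Taylor coefficients gives $F=F_r$, a homogeneous polynomial map.
\item Only now is the exponent identified. The reduced homogeneous lift of $F_r$ is $(Z_0^r,F_r(Z'))$, and ball automorphisms preserve the lift degree, so $r=m$.
\end{enumerate}
From there your Step~3 coincides with the paper's conclusion: polarize $\|F(z)\|^2=\|z\|^{2m}$, write $F=AV_m$, and get $A^*A=I$.
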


\begin{proof}
Keep the normalization in Lemma~\ref{lem:pureradial}, so that
$F(0)=0$.  Let $r\ge1$ be the vanishing order of $F$ at the origin,
and write its Taylor expansion as
\[
F(z)=F_r(z)+F_{r+1}(z)+\cdots,
\]
where each $F_\nu$ is homogeneous of degree $\nu$ and
$F_r\not\equiv0$. 

Applying the maximal principle, $F_r(\zeta)\ne0$ for general $\zeta\in\partial\mathbb B^n$. Hence we can choose a general $\zeta\in\partial\mathbb B^n$ for which
Lemma~\ref{lem:pureradial} applies and $F_r(\zeta)\ne0$.  With the
unit vector $u_\zeta$ from that lemma, there is a holomorphic
function $b_\zeta$ on a neighborhood of the closed unit disc such
that
\begin{equation}\label{eq:radial-b}
F(t\zeta)=b_\zeta(t)u_\zeta,\qquad |t|\le1.
\end{equation}
Since $F$ maps the unit sphere to the unit sphere,
\[
|b_\zeta(t)|=1\qquad (|t|=1).
\]
Thus $b_\zeta$ is a finite Blaschke product.

We next choose $\zeta$ so that the radial disc contains no nonzero
point of $F^{-1}(0)$.  This is possible for a general $\zeta$:
properness makes $F^{-1}(0)$ compact, while it is an analytic subset
of the Stein manifold $\mathbb B^n$, and hence it is finite.  We need
only avoid the finitely many radial directions determined by the
nonzero points of this fiber.  For such a $\zeta$, the only zero of
$b_\zeta$ in the unit disc is $0$.  Its multiplicity there is exactly
$r$, because $F_r(\zeta)\ne0$.  Consequently
\[
b_\zeta(t)=c_\zeta t^r,\qquad |c_\zeta|=1.
\]
Evaluating at $t=1$ in \eqref{eq:radial-b} gives
$c_\zeta u_\zeta=F(\zeta)$, and therefore
\begin{equation}\label{eq:radial-homogeneity}
F(t\zeta)=t^rF(\zeta)
\end{equation}
for every $|t|<1$ and general $\zeta\in\partial\mathbb B^n$.

Comparing the powers of $t$ in the Taylor expansion of
\eqref{eq:radial-homogeneity}, we obtain
\[
F_\nu(\zeta)=0\qquad\text{for every }\nu\ne r
\]
for general $\zeta$ on the sphere.  Such boundary directions are
Zariski dense in $\CC^n$, so $F_\nu\equiv0$ for every $\nu\ne r$.
Hence $F=F_r$ is a homogeneous polynomial map of degree $r$.

The normalized map is obtained from the original one by source and
target projective linear transformations, so the degree $m$ of the
reduced homogeneous lift is unchanged.  On the other hand, the
homogeneous polynomial map $F_r$ has the reduced homogeneous lift
\[
\widetilde P(Z_0,Z')=(Z_0^r,F_r(Z')).
\]
Thus the projective degree is $r$, and therefore $r=m$.

For $\|\zeta\|=1$, properness gives $\|F(\zeta)\|=1$.  Homogeneity
and $r=m$ now imply
\[
\|F(z)\|^2=\|z\|^{2m}\qquad (z\in\CC^n).
\]
Polarizing this identity yields
\begin{equation}\label{eq:polarized-veronese}
\langle F(z),F(w)\rangle=\langle z,w\rangle^m.
\end{equation}
Because the vectors $V_m(z)$ span $\CC^{M_m}$, we can write $F(z)=A V_m(z)$ for a linear map $A:\CC^{M_m}\to\CC^N$.  By the defining
normalization of the Veronese map,
\[
\langle V_m(z),V_m(w)\rangle=\langle z,w\rangle^m.
\]
Equation~\eqref{eq:polarized-veronese} therefore gives
$A^*A=I_{M_m}$. After some
unitary change of target coordinates, we have $A=\begin{pmatrix}I_{M_m}\\0\end{pmatrix}.$ Consequently $F=V_m\oplus0$ as claimed.
\end{proof}

\subsection{Proof of the main theorem}

\begin{proof}[Proof of Theorem~\ref{thm:main}]
By Proposition~\ref{prop:bridge}, the projective Gauss map
has positive generic defect.  The preceding sections produce the
canonical fixed operator and prove the residual logarithmic
equation, the qualified tangency of dominating residual components,
and global divisor separation without any target-dimension bound.

If the Gauss defect is at least two, Proposition~\ref{prop:rge2}
gives a non-characteristic kernel direction.  If the defect is one
and the characteristic divisor is mixed,
Proposition~\ref{prop:nomixed} gives a contradiction; its proof uses
the classical Gauss-fiber theorem stated as Theorem~\ref{thm:GH}.
Therefore either a non-characteristic direction exists or the map
lies in the defect-one pure branch.

In the non-characteristic case, Proposition~\ref{prop:linear} gives
$m=1$ and total geodesicity.  In the pure branch,
Lemma~\ref{lem:center} shows that the fixed characteristic center is
negative.  After moving it to the origin, Lemma~\ref{lem:constantv}
identifies the Gauss-kernel direction with the affine radial
direction.  The Griffiths--Harris theorem then forces the image of a
general radial line to lie in a projective line through the target
origin.  Properness reduces the corresponding one-variable maps to
monomials, and Proposition~\ref{prop:terminal} concludes that $F$ is
holomorphically equivalent to $V_m\oplus0.$
\end{proof}

\vspace{.5cm}

\noindent\textbf{Acknowledgments:} The second author would like to thank Xiaojun Huang for bringing this problem to his attention in 2009. He is also grateful to Xiaojun Huang, Shanyu Ji, Yuan Zhang, and Yuan Yuan for helpful discussions concerning this problem. During the preparation of this work, the authors used ChatGPT as an auxiliary tool. The authors verified all mathematical arguments independently and take full responsibility for the content of this paper.

\end{document}